\newtheorem{thm}{Theorem}[section]
\newtheorem{lemma}{Lemma}[section]
\newtheorem{definition}{Definition}[section]
\newtheorem{theorem}[thm]{Theorem}
\newtheorem{remark}[thm]{Remark}
\newcounter{old}
\newtheorem{corollary}[thm]{Corollary}
\numberwithin{equation}{section}
\newtheorem{atheorem}{Theorem}
\newcommand{\R}{{\mathbb {R}}}
\newcommand{\N}{{\mathbb N}}
\newcommand{\Z}{{\mathbb Z}}
\newcommand{\C}{{\mathbb C}}
\newcommand{\s}{\mathbf S}
\newcommand{\supp}{\operatorname{supp}}
\begin{document}
	\title{Bilinear spherical maximal function with fractal dilations}
	
	\author[S.S. Choudhary]{Surjeet Singh Choudhary}
	\address{Surjeet Singh Choudhary\\
		National Center for Theoretical Sciences\\
		National Taiwan University\\
		Taipei-106, Taiwan.}
	\email{surjeet19@ncts.ntu.edu.tw}
	
	\author[C.Y. Shen]{Chun-Yen Shen}
	\address{Chun-Yen Shen\\
		Department of Mathematics\\
		National Taiwan University\\
		Taipei-106, Taiwan.}
	\email{cyshen@ntu.edu.tw}
	
	\author[S. Shrivastava]{Saurabh Shrivastava}
	\address{Saurabh Shrivastava\\
		Department of Mathematics\\
		Indian Institute of Science Education and Research Bhopal\\
		Bhopal-462066, India.}
	\email{saurabhk@iiserb.ac.in}

	\begin{abstract}
		In this paper, we investigate $L^p-$boundedness of the bilinear spherical maximal function associated with a general set $E\subset\R_+$. We quantify the range of $L^p-$boundedness in terms of a dilation-invariant notion of upper Minkowski dimension of the set $E$. A particular case of this study, settles an open question of $L^p-$boundedness of the lacunary bilinear spherical maximal function at borderline cases $p_1=1$ or $p_2=1$ in dimension $d\geq3$.
	\end{abstract}
	\subjclass[2010]{Primary 42B15, 42B25}	
	
	\maketitle
	\tableofcontents
	\section{Introduction}
	The spherical average of $f:\R^d\rightarrow \C,~d\geq 2,$ is defined by 
	\[A_tf(x)=\int_{\mathbb S^{d-1}}f(x-ty)d\sigma(y), ~t>0,\]
	where $d\sigma$ denotes the normalized surface measure on the sphere $\mathbb S^{d-1}.$
	Given a set $ E\subset \R_+$, the corresponding spherical maximal operator is defined by
	\[M_Ef(x)=\sup_{t\in E}|A_tf(x)|.\]
	Stein \cite{SphStein} used Fourier analytic methods to prove that the maximal operator $M_{\R_+}f$ is bounded in $L^p$ for $p>\frac{d}{d-1}$ for $d\geq 3$. This range of $p$ is sharp upto up to the end-point. The maximal operator  $M_{\R_+}f$  is referred to as the full spherical maximal operator and it is denoted by $M_{full}f$. The Fourier analytic approach has a limitation in resolving the problem in dimension $d=2$.  Bourgain \cite{CirBourgain} introduced a geometric method to settle the case of dimension $d=2$. He proved that $M_{full}f$ is bounded in $L^p$ for $p>2$ for $d=2$. At the end-point $p=\frac{d}{d-1}$, Bourgain \cite{sphBourgain} proved restricted weak-type bounds in dimensions $d\geq3$, while it was shown by Seeger, Tao and Wright~\cite{STW} that the restricted weak-type inequality at the end-point $p=\frac{d}{d-1}$ fails in dimension $d=2$. 
	
	Under suitable restrictions on the set $E\subset\R_+$, the $L^p-$boundedness of $M_E$ extends to a larger range of $L^p-$spaces. The range of $p$ depends on the geometry and appropriate notion of size of the set $E$. For a general set $E\subset\R_+$, Seeger, Wainger and Wright \cite{SWW1} obtained sharp $L^p-$bounds for $p>p(E)$, where $p(E)$ depends on the dilation-invariant notion of the Minkowski dimension. The end-point cases were resolved in \cite{STW}. A particularly interesting case arises when we consider $E$ to be a lacunary set. In this case, the $L^p-$bounds of the operator $M_E$ (denoted by $M_{lac}$), holds for all $p>1$, see \cites{lacsphCalderon, CoifmanWeiss, RubioDuoandi}. It is conjectured that the lacunary maximal operator $M_{lac}$ is of weak-type $(1,1)$, however the problem remains open till date. We refer to ~\cite{CladekKrause} for recent developments in this direction. Further, we would like to refer the reader to Lacey \cite{sparseLacey} for the sharp range of sparse domination results for the maximal operators $M_{full}$ and $M_{lac}$. The continuity estimates and $L^p-$improving bounds for local variants of spherical maximal function play key roles in proving sparse domination results. 
	
	When $E\subset[1,2]$, the corresponding maximal operator $M_E$ is referred to as the local maximal operator. Due to the local nature of the operator, it satisfies improving $L^p\to L^q-$bounds for a range of $p$ and $q$ with $q\geq p$. Such estimates for the local operator $M_E$, in terms of the upper Minkowski dimension and the Assouad spectrum of $E$, have been studied in \cites{AHRS, RoosSeeger, LRZZ}. In these results, the Fourier decay of surface measures plays a key role, and the fractal geometry of $E$ is captured through the covering numbers and Minkowski dimension. 
	
	Let us recall the definition of dilation-invariant notion of the Minkowski dimension, see \cite{SWW1} for more details.
	\begin{definition}\label{def}
		Let $E\subset\R_+$ and $E_k=E\cap[2^{-k},2^{-k+1}]$, $k\in\Z$. We say $E$ have upper Minkowski dimension $\beta$ if
		\[\sup_{\delta>0}\sup_{k\in\Z}\frac{\log N(E_k,2^{-k}\delta)}{\log \delta^{-1}}=\beta,\]
		where $N(T,\delta)$ denotes the minimum number of $\delta-$length intervals needed to cover $T$.
	\end{definition}
	Note that the definition of upper Minkowski dimension is uniform in dyadic scales. 
	\subsection{Bilinear spherical maximal function}
	The bilinear spherical maximal function associated to a given set $E\subset \R_+$ is defined by
	\[\mathcal{M}_E(f_1,f_2)(x):=\sup_{t\in E}|\mathcal{A}_t(f_1,f_2)(x)|,\]
	where the bilinear spherical average $\mathcal{A}_t(f_1,f_2)(x)$ is given by 
	\[\mathcal{A}_t(f_1,f_2)(x)=\int_{\mathbb S^{2d-1}}f_1(x-ty_1)f_2(x-ty_2)\;d\sigma(y_1,y_2).\]
	The bilinear spherical averages first appeared in \cite{GGIPS}.
	The $L^p-$estimates for the full maximal operator $\mathcal{M}_{full}:=\mathcal{M}_{\R_+}$ were studied in \cites{BGHHO, GHH, HHY}. The sharp range of $L^p-$boundedness, including the end-point cases, were obtained by Jeong and Lee~\cite{JeongLee} for dimensions $d\geq2$. They introduced the method of slicing and gave a simple proof of optimal $L^p-$estimates for the operator $\mathcal{M}_{full}$. The slicing argument provides with a decomposition of  $\mathbb{S}^{2d-1}$ into a family of lower dimensional spheres. More precisely, Jeong and Lee \cite{JeongLee} showed that if $F$ is a continuous function defined
	on $\R^{2d}, d\geq 2$ and $(y,z)\in\R^d\times\R^d$, then
	\begin{equation}\label{slicing}
		\int_{\mathbb{S}^{2d-1}}F(y,z)\;d\sigma(y,z)=\int_{B^d(0,1)}\int_{\mathbb{S}^{d-1}}F(y,\sqrt{1-|y|^2}z)\;d\sigma_{d-1}(z)(1-|y|^2)^{\frac{d-2}{2}}dy.
	\end{equation}
	The formula as above yields the following pointwise estimate
	\begin{align}\label{slicing1}\mathcal{M}_{full}(f,g)(x)\lesssim M_{HL}f(x)M_{full}g(x),	\end{align}
	where $M_{HL}f$ is the Hardy-Littlewood maximal function. Using the symmetry of the bilinear spherical maximal function in $f$ and $g$, the boundedness region for the operator $\mathcal M_{full}$ is determined by using $L^p-$estimates of Hardy-Littlewood maximal function $M_{HL}f(x)$ and spherical maximal function $M_{full}g(x)$ for $d\geq 2.$ In dimension $d=1$, the optimal range of boundedness of $\mathcal M_{full}$ was obtained in \cites{ChristZhou, DosidisRamos, BCSS}.
	
	Jeong and Lee~\cite{JeongLee} also obtained $L^p-$improving estimates for the local version of bilinear spherical maximal function using the slicing method. Later, in~\cite{BCSS}, the authors improved the range of $L^p-$improving estimates for the local bilinear spherical maximal function. Observe that for a general set $E$, the slicing argument always leads to pointwise domination of the bilinear maximal function $\mathcal{M}_{E}(f,g)$ by a product of the Hardy-Littlewood maximal function and the full spherical maximal function as in the estimate~\eqref{slicing1}. This does not improve the range of $L^p-$boundedness of the operator $\mathcal{M}_{E}$. Therefore, the slicing argument has its limitations in dealing with the bilinear maximal function $\mathcal{M}_{E}(f,g)$ associated with a set $E$. 
	
	The $L^p-$estimates for the bilinear lacunary spherical maximal operator $\mathcal{M}_{lac}=:\mathcal{M}_E$, where $E$ is a lacunary set, were obtained by Borges and Foster \cite{BorgesFoster} for dimensions $d\geq 2$, and by Christ and Zhou~ \cite{ChristZhou} for dimension $d=1$. The proof for dimensions $d\geq 2$ relies mainly on an $L^2\times L^2\rightarrow L^1-$decay estimate in the frequency parameter for single scale average $\mathcal{A}_1(f,g)$, where either $f$ or $g$ is frequency localized in an appropriate annular region formed by Littlewood-Paley decomposition. Then, the desired estimates for the bilinear lacunary maximal function are obtained by a boot-strapping type argument. The decay estimate at the point $(2,2,1)$ is obtained by a combination of slicing argument \eqref{slicing} and Fourier decay property of $\widehat{d\sigma}$. Due to unavailability of the slicing method and lack of appropriate Fourier decay of $\widehat{d\sigma}$ in dimension $d=1$, it is significantly more difficult to obtain $L^p-$estimates for the bilinear lacunary maximal function in dimension $d=1$. Christ and Zhou~ \cite{ChristZhou} overcame these issues and proved trilinear smoothing estimates for bilinear averaging operators along certain curves, including the averages along the circle. The trilinear smoothing estimates play a key role in proving decay estimates at the point $(2,2,1)$ for frequency localized pieces and finally in obtaining the desired $L^p-$estimates for $\mathcal{M}_{lac}$. Here we summarize the results for the bilinear lacunary maximal operator  $\mathcal{M}_{lac}$.
	\begin{atheorem}\label{lac1}\cites{ChristZhou,BorgesFoster}
		Let $1\leq p_1,p_2\leq\infty$ with $\frac{1}{p}=\frac{1}{p_1}+\frac{1}{p_2}$. Then, $\mathcal{M}_{lac}$ maps $L^{p_1}(\mathbb{R}^{d})\times L^{p_2}(\mathbb{R}^{d})$ to $L^{p}(\mathbb{R}^{d})$ if
		\begin{enumerate}[(i)]
			\item $d=1$ and $1< p_1,p_2\leq\infty$.
			\item $d\geq2$ and $1< p_1,p_2\leq\infty$, or $p_1=1$ and $p_2\in(\frac{d}{d-1},\infty)$, or $p_2=1$ and $p_1\in(\frac{d}{d-1},\infty)$.
		\end{enumerate}
		Moreover, the weak-type boundedness $\mathcal{M}_{lac}:L^{1}(\mathbb{R}^{d})\times L^{\infty}(\mathbb{R}^{d})\rightarrow L^{1,\infty}(\mathbb{R}^{d})$ and $\mathcal{M}_{lac}:L^{\infty}(\mathbb{R}^{d})\times L^{1}(\mathbb{R}^{d})\rightarrow  L^{1,\infty}(\mathbb{R}^{d})$ hold. 
	\end{atheorem}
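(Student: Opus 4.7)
The plan is to split the theorem into three regimes: axial cases (including the weak endpoints) handled by the slicing identity \eqref{slicing}, deeper $L^p$-bounds in $d\geq 2$ proved via a single-scale frequency decomposition, and the $d=1$ case which requires a trilinear smoothing estimate in place of slicing.

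\textbf{Step 1: Slicing.} I would first use \eqref{slicing} and integrate one factor against its $L^\infty$-norm. The projection of the uniform measure on $\mathbb{S}^{2d-1}$ onto the first $d$ coordinates has density $c_d(1-|y|^2)^{(d-2)/2}\mathbf{1}_{B^d(0,1)}(y)$, which gives the uniform pointwise bound $|\mathcal{A}_t(f,g)(x)|\lesssim \|g\|_\infty M_{HL}f(x)$. Combined with the strong $L^p$- and weak $(1,1)$-bounds for $M_{HL}$, this handles the strong $L^p\times L^\infty\to L^p$ bounds for $p>1$ and the weak-type endpoints $L^1\times L^\infty\to L^{1,\infty}$ (and symmetrically). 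For the strong endpoints $p_1=1$, $p_2>d/(d-1)$ in $d\geq 2$, a refinement of the same slicing argument produces $\mathcal{M}_{lac}(f,g)\lesssim M_{HL}f\cdot M_{full}g$; Stein's spherical maximal theorem gives $M_{full}g\in L^{p_2}$ for $p_2>d/(d-1)$, and Hölder's inequality for weak $L^1$ against strong $L^{p_2}$ yields the claimed strong $L^p$-bound.

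\textbf{Step 2: Frequency decomposition in $d\geq 2$.} The deeper interior with $p_1,p_2\in(1,d/(d-1)]$ is not reached by slicing alone, since $M_{full}$ fails on $L^{p_2}$ for $p_2\leq d/(d-1)$. Here I would perform a Littlewood--Paley decomposition $d\sigma=\sum_{j\geq 0}K_j$ with $\widehat{K_j}$ supported in $|\xi|\sim 2^j$, and split $\mathcal{A}_t=\sum_j \mathcal{A}_t^j$. The low-frequency piece is pointwise dominated by a product of Hardy--Littlewood maximal functions, so the heart of the argument is the single-scale decay
\[\|\mathcal{A}_1^j(f,g)\|_{L^1(\R^d)}\lesssim 2^{-j\alpha}\|f\|_{L^2}\|g\|_{L^2}\]
for some $\alpha>0$, valid when one of $f,g$ is frequency-localized at scale $2^j$. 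I would derive it by applying \eqref{slicing} to reduce to a single spherical convolution and then using Plancherel together with the stationary-phase decay $|\widehat{d\sigma_{d-1}}(\xi)|\lesssim|\xi|^{-(d-1)/2}$. Interpolating this with the trivial $L^1\times L^\infty\to L^1$ and $L^\infty\times L^\infty\to L^\infty$ bounds, a square-function/boot-strapping control of the lacunary supremum by an $\ell^2$-sum over the lacunary scale, and summation of the resulting geometric series in $j$ then close the remaining bounds in (ii).

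\textbf{Step 3: Dimension $d=1$.} This is the main obstacle. The slicing identity is vacuous and $|\widehat{d\sigma}(\xi)|\lesssim|\xi|^{-1/2}$ on $\mathbb{S}^1$ is too weak to produce the single-scale $L^2\times L^2\to L^1$ decay via Plancherel. Following Christ--Zhou, the plan is to replace Fourier-analytic decay by trilinear smoothing estimates: test $\mathcal{A}_1(f,g)$ against a third function $h$ and prove a Sobolev-type gain for the trilinear form after suitable frequency localization of all three inputs, using iterated $TT^\ast$ and oscillatory-integral methods adapted to the non-degenerate curvature of the circle. Once such decay is available, the Littlewood--Paley/boot-strapping machinery of Step 2 transfers to deliver strong $L^{p_1}\times L^{p_2}\to L^p$ bounds for all $1<p_1,p_2\leq\infty$ in dimension one. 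I expect the technical heart of the whole theorem to be the construction and verification of a trilinear smoothing estimate with a gain large enough to close the bootstrap.
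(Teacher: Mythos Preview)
The paper does not give its own proof of Theorem~\ref{lac1}; the result is quoted from \cite{ChristZhou,BorgesFoster}, and the surrounding paragraphs only summarize their methods. Your outline tracks that summary closely: the $L^2\times L^2\to L^1$ single-scale decay in $d\ge2$ obtained by combining the slicing identity \eqref{slicing} with the stationary-phase decay of $\widehat{d\sigma_{d-1}}$, followed by a Littlewood--Paley decomposition and a bootstrap/summation over the lacunary scale, is exactly the Borges--Foster scheme the paper describes; and the replacement of Fourier decay by trilinear smoothing in $d=1$ is precisely the Christ--Zhou input the paper singles out. So Steps~2 and~3 are on target.

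There is, however, a genuine gap in Step~1. The pointwise bound $\mathcal{M}_{lac}(f,g)\lesssim M_{HL}f\cdot M_{full}g$ is correct, but your claim that ``H\"older's inequality for weak $L^1$ against strong $L^{p_2}$ yields the claimed strong $L^p$-bound'' on the edge $p_1=1$, $p_2>\tfrac{d}{d-1}$ is not. O'Neil's inequality gives only
\[
\|M_{HL}f\cdot M_{full}g\|_{L^{p,p_2}}\;\lesssim\;\|M_{HL}f\|_{L^{1,\infty}}\,\|M_{full}g\|_{L^{p_2}},
\]
and since $p<1<p_2$ the Lorentz space $L^{p,p_2}$ is strictly larger than $L^p$. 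Slicing by itself therefore delivers only a weak (Lorentz) estimate on the edge $p_1=1$, not the strong-type statement in part~(ii). In \cite{BorgesFoster}, and in the paper's own treatment of the more general $\mathcal{M}_E$, the strong $L^1\times L^{p_2}\to L^p$ bounds are obtained instead through the frequency-localized machinery of your Step~2: one proves quantitative decay for the high-frequency pieces $\mathcal{M}^{\mathbf n}$ at several exponent pairs (in particular an $L^1\times L^2\to L^{2/3}$-type estimate), handles the low-frequency pieces via the pointwise domination of \Cref{trivialestimates} together with bilinear interpolation between weak-type endpoints, and then sums in $\mathbf n$. You should reroute the $p_1=1$ strong-type claims through Step~2 rather than through the product bound in Step~1.
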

	\begin{figure}[H]
		\centering
		\begin{tikzpicture}[scale=3]
			\fill[lightgray] (0,0)--(1,0)--(1,1)--(0,1)--cycle;
			\draw[thin][->]  (0,0)node[left]{$O$} --(1.15,0) node[right]{$\frac{1}{p_1}$};
			\draw[thin][->]  (0,0) --(0,1.2) node[left]{$\frac{1}{p_2}$};
			\draw [densely dotted] (1,0) node[below]{$A$} --(1,1) node[right]{$(1,1)$}--(0,1)node[left]{$B$};       
			\node at (1,0) {\tiny{$\circ$}};
			\node at (0,1) {\tiny{$\circ$}};
			\node at (0.5,-0.3) {\textit{ $d=1$.}};
		\end{tikzpicture}
		\qquad
		\begin{tikzpicture}[scale=3]
			\fill[lightgray] (0,0)--(1,0)--(1,1)--(0,1)--cycle;
			\draw[thin][->]  (0,0)node[left]{$O$} --(1.15,0) node[right]{$\frac{1}{p_1}$};
			\draw[thin][->]  (0,0) --(0,1.2) node[left]{$\frac{1}{p_2}$};
			\draw [thin] (1,0) node[below]{$A$} --(1,2/3)node[right]{$Q_0$};
			\draw[thin] (0,1)node[left]{$B$}--(2/3,1)node[above]{$R_0$};
			\draw[densely dotted] (1,2/3)--(1,1)--(2/3,1);
			\node at (1,0) {\tiny{$\circ$}};
			\node at (0,1) {\tiny{$\circ$}};
			\node at (1,2/3) {\tiny{$\circ$}};
			\node at (2/3,1) {\tiny{$\circ$}};
			\node at (0.5,-0.3) {\textit{$d\geq2$.}};
		\end{tikzpicture}
		\caption{Boundedness regions for the operator $\mathcal M_{lac}$ in \Cref{lac1}.}
	\end{figure}
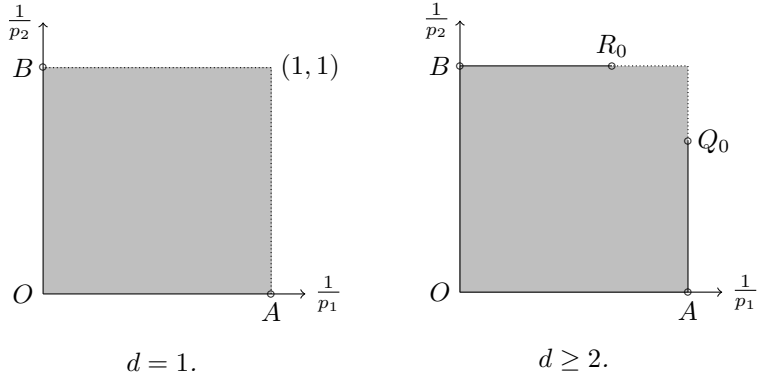
	Christ and Zhou~\cite{ChristZhou} showed that the strong-type $L^p-$bounds for $\mathcal{M}_{lac}$ do not hold if either $p_1=1$ or $p_2=1$. However, in higher dimensions, $L^p-$boundedness of $\mathcal{M}_{lac}$ is known to hold for the range $p_1=1$ and $p_2\in(\frac{d}{d-1},\infty)$, or $p_2=1$ and $p_1\in(\frac{d}{d-1},\infty)$. The question concerning the remaining points on the line segments $p_1=1$ and $p_2=1$ remained unresolved. In this paper, we address this problem and extend the range of $L^p-$boundedness of $\mathcal{M}_{lac}$ to the borderline cases in  $d\geq4$ excepting the end-point point $(1,1)$. Indeed, we obtain new results for bilinear spherical maximal function associated with general sets. The study of local bilinear spherical maximal function with a general set $E\subset[1,2]$ was carried out by Borges, Foster and Ou \cite{BFO}. They proved Sobolev smoothing estimates at the point $(2,2,2)$ for the local maximal operator along hypersurfaces that exhibit Fourier decay of order $\frac{1}{(1+|\xi|)^{\alpha}}$ with $\alpha>\frac{d}{2}$. They also obtained necessary conditions for the boundedness of $\mathcal{M}_E$ in terms of the upper Minkowski dimension of the underlying fractal set. The Sobolev smoothing estimates give a simplified derivation of the continuity estimates for such local maximal operators associated with general sets $E$. They also deduced $L^p-$improving bounds and sparse bounds for the associated bilinear maximal operators. However, when restricted to spherical maximal functions, this does not improve the $L^p-$boundedness range for $\mathcal{M}_E$ beyond that of $\mathcal{M}_{full}$.
	
	The sparse bounds for the bilinear spherical maximal function $\mathcal{M}_{full}$ were studied by Borges et al. in \cite{BFOPZ}. Extensions of these results to some general surfaces have also been explored; for instance, see \cites{CGHHS, LS} for $L^p-$estimates and \cites{RSS, PS} for sparse bounds. Additionally, the multilinear extension of the maximal function associated with spheres and other general surfaces has been studied in \cites{Dosidis, SS, GHHP, CLS, Gao}.
	\section{Main results}\label{sec:results}
	In this paper, we explore and develop an alternative approach to the slicing method in order to obtain new results concerning $L^p-$boundedness of the bilinear maximal operator $\mathcal M_E$ associated with general set $E$. We intend to quantify the range of exponents $(p_1, p_2, p)$ for the $L^{p_1}\times L^{p_2}\rightarrow L^p-$boundedness of the operator $\mathcal M_E$ in terms of upper Minkowski dimension of the set $E$. We exploit the Fourier decay property of the surface measure in a refined manner and perform a scale-adapted discretization of the set $E$ to capture fractal geometry of the underlying set. These ideas, along with suitable adaptation of bilinear Calder\'{o}n-Zygmund theory, yield new results for the maximal operator $\mathcal M_E$. In order to describe our results, we need the following notation.
	
	Let $\Omega({P_1,P_2,\dots,P_k})$ denote the convex hull of points $P_1,P_2,\dots,P_k\in\R^2.$  Let $d\geq2$ and set $O = (0, 0),\; A = (1, 0),\; B = (0, 1),\; P_0=\big(\frac{2d-1}{2d},\frac{2d-1}{2d}\big),\; Q_0=\big(1,\frac{d-1}{d}\big), \;\text{and}~ R_0=\big(\frac{d-1}{d},1\big)$. For $0\leq\beta\leq1$ consider the point
	\[P=P(d,\beta)=\begin{cases} 
		\left(\frac{2d-1-\beta}{2d-1},\frac{2d-1-\beta}{2d-1}\right), & 0\leq \beta<\frac{2d-1}{2d}, \\
		P_0, & \frac{2d-1}{2d}\leq \beta\leq 1.
	\end{cases}\]
	Further, for $d\geq3 $, we define $Q=Q(d,\beta)$ and $R=R(d,\beta)$ by 
%
\[
		\begin{array}{c@{\qquad}c}
			Q=
			\begin{cases} 
				\left(1,\frac{d-2-2\beta}{d-2-\beta}\right) & \beta\leq\min\left\{\frac{d-2}{d+1},\frac{1}{2}\right\},\\ 
				\left(1,\frac{d-3}{d-3+\beta}\right) & \frac{1}{2}<\beta<\frac{d-3}{d-1},\ d\geq6, \\
				Q_0 & \max\left\{\frac{d-2}{d+1},\frac{d-3}{d-1}\right\}\leq \beta\leq 1
			\end{cases}
			&
			R=
			\begin{cases} 
				\left(\frac{d-2-2\beta}{d-2-\beta},1\right) & \beta\leq\min\left\{\frac{d-2}{d+1},\frac{1}{2}\right\}, \\ 
				\left(\frac{d-3}{d-3+\beta},1\right) & \frac{1}{2}<\beta<\frac{d-3}{d-1},\ d\geq6, \\
				R_0 & \max\left\{\frac{d-2}{d+1},\frac{d-3}{d-1}\right\}\leq \beta\leq 1.
			\end{cases}
		\end{array}
		\]
		By a direct calculation, one can observe that for $d\leq 5$ we have that $\frac{d-3}{d-1}\leq \frac{d-2}{d+1}\leq \frac{1}{2}$. Therefore, for $d\leq 5$, the points $Q$ and $R$ are given by 
		\[
		\begin{array}{c@{\qquad}c}
			Q(d,\beta)=
			\begin{cases} 
				\left(1,\frac{d-2-2\beta}{d-2-\beta}\right) & \beta\leq\frac{d-2}{d+1},\\ 
				Q_0 & \frac{d-2}{d+1}\leq \beta\leq 1
			\end{cases}
			&\text{and}~
			R(d,\beta)=
			\begin{cases} 
				\left(\frac{d-2-2\beta}{d-2-\beta},1\right) & \beta\leq\frac{d-2}{d+1}, \\ 
				R_0 & \frac{d-2}{d+1}<\beta\leq 1.
			\end{cases}
		\end{array}
		\]
	If $d\geq 6$, then $\frac{1}{2}<\frac{d-2}{d+1}<\frac{d-3}{d-1}.$ Consequently, the points $Q$ and $R$ are 
	\[
	\begin{array}{c@{\qquad}c}
		Q=
		\begin{cases} 
			\left(1,\frac{d-2-2\beta}{d-2-\beta}\right) &0\leq  \beta\leq\frac{1}{2},\\ 
			\left(\frac{d-3}{d-3+\beta},1\right) & \frac{1}{2}<\beta\leq \frac{d-3}{d-1}, \\
			Q_0 &  \frac{d-3}{d-1}<\beta\leq 1
		\end{cases}
		&\text{and}~
		R=
		\begin{cases} 
			\left(\frac{d-2-2\beta}{d-2-\beta},1\right) & 0\leq  \beta\leq\frac{1}{2}, \\ 
			\left(\frac{d-3}{d-3+\beta},1\right) &\frac{1}{2}<\beta\leq \frac{d-3}{d-1}, \\
			R_0 & \frac{d-3}{d-1}<\beta\leq 1.
		\end{cases}
	\end{array}
	\]
	The following are the main results of the paper.
	\begin{theorem}\label{main}
		Let $d\geq2$ and $0<p_1,p_2,p\leq\infty$ with $\frac{1}{p_1}+\frac{1}{p_2}=\frac{1}{p}$.  Assume that $E\subset\R_+$ has an upper Minkowski dimension $\beta$. Then the operator $\mathcal{M}_E$ is bounded from $L^{p_1}(\mathbb{R}^{d})\times L^{p_2}(\mathbb{R}^{d})$ into $L^{p}(\mathbb{R}^{d}),$ where 
		\begin{enumerate}
			\item $d=2$, and $(\frac{1}{p_1},\frac{1}{p_2})\in \Omega(O,A,Q_0,P,R_0,B)\setminus \{A,B,PQ_0,PR_0\}$.
			\item $d\geq3$, and $(\frac{1}{p_1},\frac{1}{p_2})\in \Omega(O,A,Q,P,R,B)\setminus \{A,B,PQ,PR\}$.
		\end{enumerate}
	\end{theorem}
	\begin{figure}[H]
		\centering
		\begin{tikzpicture}[scale=3.3]
			\fill[lightgray] (0,0)--(1,0)--(1,2/3)--(2/3,1)--(0,1)--cycle;
			\fill[gray] (1,2/3)--(1,4/5)--(24/25,24/25)--(4/5,1)--(2/3,1)--(1,2/3);
			\draw[thin][->]  (0,0)node[left]{$O$} --(1.15,0) node[right]{$\frac{1}{p_1}$};
			\draw[thin][->]  (0,0) --(0,1.2) node[left]{$\frac{1}{p_2}$};
			\draw [thin] (1,0) node[below]{$A$} --(1,2/3)node[right]{$Q_0$}--(1,4/5)node[right]{$Q$};
			\draw[thin] (0,1)node[left]{$B$}--(2/3,1)node[above]{$R_0$}--(4/5,1)node[above]{$R$};
			\draw[densely dotted] (1,4/5)--(24/25,24/25)node[right]{$P$}--(4/5,1);
			\draw[densely dotted] (1,0)--(1,1)--(0,1);
			\draw[densely dotted] (1,2/3)--(2/3,1);        
			\node at (1,0) {\tiny{$\circ$}};
			\node at (0,1) {\tiny{$\circ$}};
			\node at (1,4/5) {\tiny{$\circ$}};
			\node at (4/5,1) {\tiny{$\circ$}};
		\end{tikzpicture}
		\caption{The dark gray region depicts the improved range for boundedness of $\mathcal{M}_E$.}
		\label{fig}
	\end{figure}
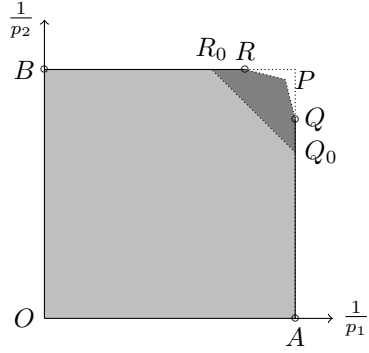
	\begin{theorem}\label{main2}
		Let $d=1$ and $0<p_1,p_2,p\leq\infty$ with $\frac{1}{p_1}+\frac{1}{p_2}=\frac{1}{p}$. Assume that $E\subset\R_+$ has an upper Minkowski dimension $\beta$ and define
		\begin{align*}
			D_1=\begin{cases} 
				\left(1,0\right) & \beta<\frac{1}{2}, \\
				\left(\frac{1}{2\beta},0\right) & \beta\geq \frac{1}{2}, 
			\end{cases}\qquad D_2= \begin{cases} 
				\left(1-\beta,1-\beta\right) & \beta<\frac{1}{2}, \\
				\left(\frac{1}{2},\frac{1}{2}\right) & \beta\geq \frac{1}{2},
			\end{cases}\quad\text{and}\quad D_3= \begin{cases} 
				\left(0,1\right) & \beta<\frac{1}{2}, \\
				\left(0,\frac{1}{2\beta}\right) & \beta\geq \frac{1}{2}.
			\end{cases}
		\end{align*}        
		Then, the operator $\mathcal{M}_E$ is bounded from $L^{p_1}(\mathbb{R})\times L^{p_2}(\mathbb{R})$ into $L^{p}(\mathbb{R})$ when $(\frac{1}{p_1},\frac{1}{p_2})\in \Omega(O,D_1,D_2,D_3)\setminus \{D_1D_2, D_2D_3\}$.
	\end{theorem}
	\begin{figure}[H]
		\centering
		\begin{tikzpicture}[scale=3]
			\fill[lightgray] (0,0)--(0.7,0) --(0.5,0.5) --(0,0.7)--cycle;
			\draw[thin][->]  (0,0)node[left]{$O$} --(1.15,0) node[right]{$\frac{1}{p_1}$};
			\draw[thin][->]  (0,0) --(0,1.2) node[left]{$\frac{1}{p_2}$};
			\draw [densely dotted] (1,0) --(1,1) node[right]{$(1,1)$}--(0,1)node[left]{$B$};       
			\node at (0.7,0) {\tiny{$\circ$}};
			\node at (0,0.7) {\tiny{$\circ$}};
			\node at (0.5,0.5) {\tiny{$\circ$}};
			\draw[densely dotted] (0.7,0)node[below]{$D_1$} --(0.5,0.5)node[right]{$D_2$} --(0,0.7)node[left]{$D_3$};
			\node at (0.5,-0.3) {\textit{ $\beta\geq\frac{1}{2}$.}};
		\end{tikzpicture}
		\qquad
		\begin{tikzpicture}[scale=3]
			\fill[lightgray] (0,0)--(1,0)--(0.7,0.7)--(0,1)--cycle;
			\draw[thin][->]  (0,0)node[left]{$O$} --(1.15,0) node[right]{$\frac{1}{p_1}$};
			\draw[thin][->]  (0,0) --(0,1.2) node[left]{$\frac{1}{p_2}$};
			\draw[densely dotted] (1,0)--(1,1)--(0,1);
			\node at (1,0) {\tiny{$\circ$}};
			\node at (0,1) {\tiny{$\circ$}};
			\node at (0.7,0.7) {\tiny{$\circ$}};
			\draw[densely dotted] (1,0)node[below]{$D_1$} --(0.7,0.7)node[right]{$D_2$} --(0,1)node[left]{$D_3$};
			\node at (0.5,-0.3) {\textit{$\beta<\frac{1}{2}$.}};
		\end{tikzpicture}
		\caption{Boundedness regions for the operator $\mathcal M_{E}$ in dimension $d=1$.}
	\end{figure}
		Note that the bilinear lacunary spherical maximal function corresponds to the case $\beta=0$, which was addressed by Christ and Zhou \cite{ChristZhou} for dimension $d=1$ using trilinear smoothing inequalities for high frequency terms. We recover their result as a special case of \Cref{main2}. Further, as a consequence of \Cref{main}, we obtain $L^p-$boundedness of the lacunary bilinear spherical maximal operator $\mathcal M_{lac}$ in the borderline cases in dimension $d\geq 3.$
	\begin{theorem}\label{lac2}
		Let $d\geq3$ and $1\leq p_1,p_2\leq\infty$ with $\frac{1}{p}=\frac{1}{p_1}+\frac{1}{p_2}$. Then, $\mathcal{M}_{lac}$ maps $L^{p_1}(\mathbb{R}^{d})\times L^{p_2}(\mathbb{R}^{d})$ to $L^{p}(\mathbb{R}^{d})$ except for the end-points $(1,1),(1,\infty),\text{ and }(\infty,1)$. 
	\end{theorem}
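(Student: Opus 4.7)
The plan is to deduce Theorem~\ref{lac2} as an immediate corollary of Theorem~\ref{main} by specializing the upper Minkowski dimension parameter to $\beta=0$, which is the dimension of any lacunary set in the sense of Definition~\ref{def}.

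First I would verify that every lacunary set $E\subset\R_+$ has upper Minkowski dimension $\beta=0$. If $E$ is lacunary with ratio bounded below by some $\lambda>1$, then each $E_k=E\cap[2^{-k},2^{-k+1}]$ consists of a uniformly bounded number of points depending only on $\lambda$, so $N(E_k,2^{-k}\delta)\leq C(\lambda)$ uniformly in $k\in\Z$ and $\delta\in(0,1)$. This forces the supremum in Definition~\ref{def} to vanish, giving $\beta=0$.

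Next I would substitute $\beta=0$ into the definitions of $P,Q,R$ from Section~\ref{sec:results}. Since $0<\frac{2d-2}{2d-1}$ for all $d\geq 2$, we obtain $P(d,0)=\left(\frac{2d-2}{2d-2},\frac{2d-2}{2d-2}\right)=(1,1)$. Similarly, for $d\geq 4$ we have $0<\frac{d-3}{d-1}$, which yields $Q(d,0)=\left(1,\frac{d-3}{d-3}\right)=(1,1)$ and $R(d,0)=\left(\frac{d-3}{d-3},1\right)=(1,1)$. Thus the three auxiliary points $P,Q,R$ collapse to the vertex $(1,1)$ of the unit square; consequently the convex hull $\Omega(\{O,A,Q,P,R,B\})$ reduces to $[0,1]^2$, and the excluded segments $PQ$ and $PR$ degenerate to the single point $(1,1)$.

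Applying part~(2) of Theorem~\ref{main} with $\beta=0$ then yields the boundedness $\mathcal{M}_{lac}:L^{p_1}(\R^d)\times L^{p_2}(\R^d)\to L^p(\R^d)$ for every $\left(\frac{1}{p_1},\frac{1}{p_2}\right)\in[0,1]^2\setminus\{A,B,(1,1)\}$. Translating the three excluded vertices back to $(p_1,p_2)$-coordinates gives the forbidden pairs $(1,\infty),(\infty,1),(1,1)$, which matches the statement of Theorem~\ref{lac2} exactly. Since the argument is just a substitution, there is no genuine obstacle in this step: all the analytic content is carried by the proof of Theorem~\ref{main}, and the only item requiring verification is the elementary identification $\beta=0$ for lacunary sets performed above.
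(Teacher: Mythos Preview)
Your proposal is correct and matches the paper's approach exactly: the paper states just before Theorem~\ref{lac2} that the lacunary case corresponds to $\beta=0$ in Theorem~\ref{main}, and then records Theorem~\ref{lac2} as an immediate consequence without further argument. Your computation that $P,Q,R$ all collapse to $(1,1)$ when $\beta=0$ and $d\geq4$, together with the verification that lacunary sets have dimension zero in the sense of Definition~\ref{def}, fills in precisely the details the paper leaves implicit.
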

	
	\begin{figure}[H]		
		\centering
		\begin{tikzpicture}[scale=3]
			\fill[lightgray] (0,0)--(1,0)--(1,1)--(0,1)--cycle;
			\draw[thin][->]  (0,0)node[left]{$O$} --(1.15,0) node[right]{$\frac{1}{p_1}$};
			\draw[thin][->]  (0,0) --(0,1.2) node[left]{$\frac{1}{p_2}$};
			\draw [thin] (1,0) node[below]{$A$} --(1,1) node[right]{$(1,1)$}--(0,1)node[left]{$B$};       
			\node at (1,0) {\tiny{$\circ$}};
			\node at (0,1) {\tiny{$\circ$}};       
			\node at (1,1) {\tiny{$\circ$}};
		\end{tikzpicture}
		\caption{Boundedness region for the operator $\mathcal M_{lac}$ in dimension  $d\geq3$.}
	\end{figure}
	Moreover, due to Stein's maximal principle, we have the following result about pointwise convergence of bilinear spherical averages over an arbitrary set of dilations.
	\begin{corollary}
		Let $\{t_j\}$ be a sequence of positive real numbers such that $t_j\rightarrow 0$. Let $E=\{t_j:j\in\N\}$ have the upper Minkowski dimension $\beta$. Then for $f_1\in L^{p_1}(\R^d)$ and $f_2\in L^{p_2}(\R^d)$, where  $\big(\frac{1}{p_1},\frac{1}{p_2}\big)$ lies in the region given in \Cref{main}, the following pointwise almost everywhere convergence holds
		\[\lim_{j\to\infty}\mathcal{A}_{t_j}(f_1,f_2)(x)=f_1(x)f_2(x).\]
	\end{corollary}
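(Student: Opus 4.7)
The plan is a standard two-step transference argument: first verify pointwise convergence on a dense subclass, and then propagate it to the full product $L^{p_1} \times L^{p_2}$ using the maximal estimate of Theorem \ref{main}.

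For the dense subclass, I would take $g_1, g_2 \in C_c(\R^d)$. By uniform continuity one has $g_1(x - ty_1)g_2(x - ty_2) \to g_1(x)g_2(x)$ as $t \to 0^+$, uniformly in $(y_1, y_2) \in \mathbb{S}^{2d-1}$; integrating against $d\sigma$ and using $t_j \to 0$ yields $\mathcal{A}_{t_j}(g_1, g_2)(x) \to g_1(x)g_2(x)$ uniformly in $x$. This handles the a.e.\ convergence on $C_c \times C_c$ with no use of the maximal function.

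For arbitrary $f_i \in L^{p_i}(\R^d)$ with $(1/p_1, 1/p_2)$ in the region of Theorem \ref{main}, I would fix $\epsilon > 0$ and pick $g_i \in C_c(\R^d)$ with $\|f_i - g_i\|_{p_i} < \epsilon$. Expanding bilinearly and using the pointwise bound $|\mathcal{A}_{t_j}(\cdot,\cdot)| \leq \mathcal{M}_E(\cdot,\cdot)$ (valid because each $t_j \in E$), one obtains
\[
\limsup_{j\to\infty}\bigl|\mathcal{A}_{t_j}(f_1, f_2)(x) - f_1(x) f_2(x)\bigr| \leq \mathcal{M}_E(f_1 - g_1, f_2)(x) + \mathcal{M}_E(g_1, f_2 - g_2)(x) + \bigl|g_1 g_2 - f_1 f_2\bigr|(x),
\]
the intermediate contribution $|\mathcal{A}_{t_j}(g_1, g_2) - g_1 g_2|$ having been discarded via the first step. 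By Chebyshev's inequality, the strong bound $\mathcal{M}_E : L^{p_1} \times L^{p_2} \to L^{p}$ from Theorem \ref{main}, and the H\"older-type splitting $|g_1 g_2 - f_1 f_2| \leq |g_1||g_2-f_2| + |g_1-f_1||f_2|$, the measure of the exceptional set $\{x : \limsup_j |\mathcal{A}_{t_j}(f_1, f_2) - f_1 f_2| > \lambda\}$ is of order $\lambda^{-p}\epsilon^p$; letting $\epsilon \to 0$ and then $\lambda \to 0$ forces the limsup to vanish almost everywhere. No genuine obstacle is anticipated, since this is simply the easy direction of Stein's maximal principle; the only thing to check is that the chosen $(1/p_1, 1/p_2)$ lies in the strong-boundedness region of Theorem \ref{main}, which is precisely the hypothesis of the corollary.
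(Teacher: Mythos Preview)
Your proposal is correct and matches the paper's approach exactly: the paper does not spell out a proof but simply attributes the corollary to Stein's maximal principle, and your density-plus-maximal-bound transference is precisely that principle in action. One small technicality worth noting is that the region in Theorem~\ref{main} contains points with $p_i=\infty$, where $C_c$ is not dense in $L^{p_i}$; this is harmless because convergence at a fixed $x$ (with $t_j\to 0$) depends only on $f_1,f_2$ in a neighborhood of $x$, so one may first localize to reduce to compactly supported $f_i$ and hence to finite exponents.
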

	\subsection*{Methodology of proofs}
	As indicated earlier, the slicing method does not improve the range of $L^p-$estimates for the bilinear spherical maximal functions associated with general sets beyond that of the full maximal operator $\mathcal M_{full}$. We overcome this difficulty by adopting to the following strategy to prove \Cref{main}.  
	\begin{itemize}
		\item First, we perform Littlewood-Paley decomposition of both the functions in frequency space. The $L^p-$estimates for the maximal function acting on low frequency terms and terms with mixed type frequencies are dealt with directly. In this case, we could  dominate the maximal function by a product of Hardy-Littlewood maximal functions, see \Cref{trivialestimates}. This step reduces the problem to proving the desired bounds for terms with high frequencies. 
		\item Next, we consider the local maximal operator associated with the set $E_0=E\cap [1,2]$ and carry out the decomposition of multiplier as used by Heo, Hong and Yang~ \cite{HHY}. We derive
		\[L^2\times L^2\to L^1,\quad L^2\times L^1\to L^{2/3}, \quad \text{and}~L^1\times L^2\to L^{2/3}\]
		estimates, see \Cref{l2} and \Cref{l2/3}, with suitable decay for the operator norm depending on the frequency parameter and the upper Minkowski content of $E_0$. The second estimate that we obtain is an end-point estimate for the operator. We use an appropriate space localization of the operator and establish end-point $L^1 \times L^1 \to L^{1/2}-$estimate, see \Cref{l1}. This end-point estimate has growth in terms of the frequency parameter and the number of intervals used in discretization of the $E_0$. 
		\item The interpolation argument between the growth estimate at end-points and the decay estimate at points mentioned above yields a region of $L^p-$boundedness of $\mathcal{M}_{E_0}$ depending on upper Minkowski dimension of $E_0$.
		\item Finally, we extend the results obtained for local maximal operator to the global operators associated with $E\subset\R_+$. This part is done by using appropriate rescaling arguments and adaptation of the bilinear Calderón-Zygmund theory to the underlying situation. We exploit the methods used in \cite{ChristZhou} and \cite{BCSS2}.
	\end{itemize}	
	\subsection*{Organization of the paper} In \Cref{sec:reductionlp}, we decompose the operator using Littlewood-Paley decomposition of both the functions. In \Cref{multiplier}, we decompose the multiplier corresponding to the bilinear spherical average. In \Cref{local}, we obtain $L^p-$estimates for local bilinear spherical maximal function. In \Cref{full} and \Cref{d=1}, we prove the main results \Cref{main} and \Cref{main2} respectively.
	\section{Reduction using Littlewood-Paley decomposition}\label{sec:reductionlp}	
	Consider the following partition of the identity 
	\begin{align*}\label{identity}
		\widehat \phi(\xi)+\sum_{j=1}^\infty\widehat \psi_{2^{-j}}(\xi)=1,\quad\xi\neq 0,
	\end{align*}
	where $\phi\in \mathcal S(\R^d)$ is a radial and decreasing function such that $\text{supp}(\widehat\phi)\subseteq B(0,2)$ and $\widehat\phi(\xi)=1$ for $\xi\in B(0,1)$. We have used the notation   $\widehat{\phi_t}(\xi)=\widehat\phi(t\xi)$ and 
	$\widehat{\psi}_t(\xi)= \widehat{\phi}(t\xi)-\widehat{\phi}(2t\xi)$. 
	
	For $i\in\Z$, we define smooth Fourier projection operators $\widehat{P_if}(\xi)=\hat{f}(\xi)\hat{\phi}(2^{-i}\xi)$ and $\widehat{R_{i}f}=\hat{f}(\xi)\hat{\psi}(2^{-i}\xi)$. Using this, we can write  
	\[f=P_if+\sum_{j=1}^\infty R_{i+j}f.\]
	Consequently, we get the following decomposition of the operator $\mathcal{M}_{E}$.
	\begin{align}\label{deco}
		&\mathcal{M}_{E}(f_1,f_2)(x)\nonumber\\\nonumber	\leq & \sup_{k\in\Z}\sup_{t\in E_k}\Big|\mathcal{A}_{t}(P_kf_1,P_kf_2)(x)\Big|+\sup_{k\in\Z}\sup_{t\in E_k}\bigg|\mathcal{A}_{t}\Big(\sum^{\infty}_{n_1=1}R_{k+n_1}f_1,P_kf_2\Big)(x)\bigg|
		\\\nonumber
		&+\sup_{k\in\Z}\sup_{t\in E_k}\bigg|\mathcal{A}_{t}\Big(P_kf_1,\sum^{\infty}_{n_2=1}R_{k+n_2}f_2\Big)(x)\bigg|+\sup_{k\in\Z}\sup_{t\in E_k}\Big|\mathcal{A}_{t}\Big(\sum^{\infty}_{n_1=1}R_{k+n_1}f_1,\sum^{\infty}_{n_2=1}R_{k+n_2}f_2)(x)\Big) \bigg|\\\nonumber
		\lesssim & \sup_{k\in\Z}\sup_{t\in E_k}\Big|\mathcal{A}_{t}(P_kf_1,P_kf_2)(x)\Big|+\sup_{k\in\Z}\sup_{t\in E_k}\Big|\mathcal{A}_{t}(P_kf_1,f_2)(x)\Big|
		\\
		&+\sup_{k\in\Z}\sup_{t\in E_k}\Big|\mathcal{A}_{t}(f_1,P_kf_2)(x)\Big|+\sum^{\infty}_{n_1,n_2=1}\mathcal{M}^{\mathbf{n}}(f_1,f_2)(x),
	\end{align}
	where
	\begin{align*}
		\mathcal{M}^{\mathbf{n}}(f_1,f_2)(x):=\sup_{k\in \Z}\sup_{t\in E_k}\Big|\mathcal{A}_{t}(R_{k+n_1}f_1,R_{k+n_2}f_2)(x)\Big|.
	\end{align*}
	Denote 
	\begin{align*}
		\mathcal{M}^{\mathbf{n}}_k(f_1,f_2)(x)=\sup_{t\in E_k}\Big|\mathcal{A}_{t}(R_{k+n_1}f_1,R_{k+n_2}f_2)(x)\Big|.
	\end{align*}
	The first three terms in \eqref{deco} are controlled by product of the Hardy-Littlewood maximal functions, which in turn implies strong-type $L^p-$boundedness of $\mathcal{M}_E$ for $p_1,p_2>1$ and weak-type $L^p-$boundedness when $p_i=1$. The strong-type $L^p-$boundedness of the operator $\mathcal{M}_E$ in the case where $p_i=1$ and $1<p_j<\infty$ for $i,j\in\{1,2\},~i\neq j$, are obtained by standard interpolation argument between weak-type bounds at $\big(1,1,\frac{1}{2}\big)$ and either at $(1,\infty,1)$ or $(\infty,1,1)$.
	\begin{lemma}\label{trivialestimates} Let $d\geq2$. The following pointwise estimates hold
		\begin{align*}
			\sup_{k\in \Z}\sup_{t\in E_k}\Big|\mathcal{A}_{t}(P_kf_1,f_2)(x)\Big|&\lesssim M_{HL}f_1(x)M_{HL}f_2(x),\\
			\sup_{k\in \Z}\sup_{t\in E_k}\Big|\mathcal{A}_{t}(f_1,P_kf_2)(x)\Big|&\lesssim M_{HL}f_1(x)M_{HL}f_2(x),\\
			\sup_{k\in \Z}\sup_{t\in E_k}\Big|\mathcal{A}_{t}(P_kf_1,P_kf_2)(x)\Big|&\lesssim M_{HL}f_{1}(x)M_{HL}f_2(x).
		\end{align*}
	\end{lemma}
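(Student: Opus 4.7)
The plan is to exploit the scale matching: for $t\in E_k$ we have $t\sim 2^{-k}$, while the kernel defining $P_k$ is an $L^1$-normalized Schwartz function concentrated at scale $2^{-k}$, so translation by $ty$ with $|y|\leq 1$ stays inside the scale at which $P_k$ smooths. Consequently one expects, and must prove, the uniform pointwise bound
\[
\sup_{t\in E_k}\sup_{|y|\leq 1}|P_kf(x - ty)| \lesssim M_{HL}f(x), \qquad k\in\Z.
\]
Writing $P_kf = \psi_{2^{-k}}\ast f$ with $\psi_{2^{-k}}(u)=2^{kd}\phi(2^ku)$ and $\phi\in\mathcal S(\R^d)$, and rescaling $u=2^{-k}v$, the convolution becomes $\int \phi(v - 2^kty)\,f(x - ty - 2^{-k}v)\,dv$. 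Since $|2^kty|\leq 2$, we have $|\phi(v - 2^kty)|\lesssim_N(1+|v|)^{-N}$. Splitting the $v$-integral into dyadic shells $|v|\sim 2^j$ and noting that the resulting $f$-integral is a ball average of $|f|$ around $x$ of radius $\sim 2^{j-k}$ yields the bound after summing in $j$.

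With this uniform bound, the third estimate is immediate: on $\mathbb S^{2d-1}$ we have $|y_i|\leq 1$, hence both $|P_kf_1(x-ty_1)|$ and $|P_kf_2(x-ty_2)|$ are pointwise $\lesssim M_{HL}f_i(x)$, and the bilinear spherical average of their product is controlled by $M_{HL}f_1(x)\,M_{HL}f_2(x)$ times $\sigma(\mathbb S^{2d-1})$.

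For the first estimate, by the symmetry of $\mathbb S^{2d-1}$ in its two copies of $\R^d$, apply the slicing formula \eqref{slicing} so that it is $P_kf_1$ (rather than $f_2$) that is averaged over the lower-dimensional sphere:
\[
\mathcal A_t(P_kf_1,f_2)(x) = \int_{B^d(0,1)}(1-|y|^2)^{\frac{d-2}{2}}\,f_2(x-ty)\,A_{t\sqrt{1-|y|^2}}(P_kf_1)(x)\,dy,
\]
where $A_r g(x) := \int_{\mathbb S^{d-1}} g(x - rz)\,d\sigma_{d-1}(z)$. For each $y\in B^d(0,1)$, the inner spherical average is of values $P_kf_1(x - ty')$ with $y'=\sqrt{1-|y|^2}z$ satisfying $|y'|\leq 1$, so the uniform bound yields $|A_{t\sqrt{1-|y|^2}}(P_kf_1)(x)|\lesssim M_{HL}f_1(x)$. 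The remaining integral $\int_{B^d(0,1)}|f_2(x-ty)|\,dy$ is, after the change of variable $y\mapsto y/t$, an average of $|f_2|$ over a ball of radius $t$ around $x$, hence $\lesssim M_{HL}f_2(x)$. The second estimate follows by interchanging the roles of $f_1$ and $f_2$.

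The only technical point is the uniform kernel bound in the first display, which is a routine Schwartz-kernel computation once the scale matching $|ty|\lesssim 2^{-k}$ is identified; the rest of the argument is a clean application of slicing and classical maximal function arithmetic.
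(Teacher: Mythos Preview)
Your proof is correct and follows essentially the same approach as the paper: both hinge on the uniform pointwise bound $\sup_{t\in E_k}\sup_{|y|\le 1}|P_kf(x-ty)|\lesssim M_{HL}f(x)$ (from the Schwartz decay of $\phi$ and the scale matching $t\sim 2^{-k}$) combined with the slicing formula~\eqref{slicing}. The only cosmetic difference is order of operations for the first estimate: the paper pulls out $\sup_{|y|\le 1}|P_kf_1(x-ty)|$ before slicing the remaining $\int_{\mathbb S^{2d-1}}|f_2(x-tz)|\,d\sigma$, whereas you slice first (with the roles swapped so $f_2$ sits on the ball) and then bound the inner spherical average of $P_kf_1$; both routes are equally short and use the same ingredients.
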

	\begin{proof}
		Note that if $t\in E_k$ and $|y|\leq1$, then we have $|ty|\leq2^{-k+1}$. Since $\phi\in\mathcal S(\R^d)$, we get
		\begin{align*}
			\sup_{t\in E_k}\sup_{|y|\leq1}|P_{k}f_1(x-ty)|&\leq\sup_{|ty|\leq2^{-k+1}}\left|\int_{\R^d} f_1(z)2^{kd}\phi(2^k(x-ty-z))~dz\right|\\
			&\lesssim \sup_{|y|\leq2}\left|\int_{\R^d} \frac{2^{kd}f_1(z)}{\Big(4+|2^{k}(x-z)-y|\Big)^{N}}dz\right|\\
			&\lesssim  M_{HL}f_1(x).
		\end{align*}
		Therefore, using the slicing argument, we have
		\begin{align*}
			\sup_{k\in \Z}\sup_{t\in E_k}\Big|\mathcal{A}_{t}(P_{k}f_1,f_2)(x)\Big|
			&\leq \sup_{k\in \Z}\sup_{t\in E_k}\left(\sup_{|y|\leq1}|P_{k}f_1(x-ty)|\int_{\mathbb{S}^{2d-1}}|f_2(x-tz)|d\sigma(y,z)\right)\\
			&\lesssim M_{HL}f_1(x)\sup_{k\in \Z}\sup_{t\in E_k}\int_{B^{d}(0,1)}|f_2(x-tz)|(1-|z|^{2})^{(d-2)/2}\int_{\mathbb{S}^{d-1}}d\sigma(y)~dz\\
			&\lesssim M_{HL}f_1(x)M_{HL}f_2(x).
		\end{align*}
		The remaining two estimates are deduced similarly. 
	\end{proof}
	In view of the lemma above, we are left with proving the desired $L^p-$boundedness of the operator $\mathcal{M}^{\mathbf{n}}$, with a suitable decay in $|\mathbf{n}|$, in order to complete the proof of \Cref{main}.

	\section{Decomposition of multiplier}\label{multiplier}	
	Consider the multiplier form of the bilinear spherical average given by
	\[\mathcal{A}_t(f_1,f_2)(x)=\int_{\R^d\times\R^d}\widehat{f_1}(\xi)\widehat{f_2}(\eta)\widehat{d\sigma}(t\xi,t\eta)e^{ix\cdot(\xi+\eta)}\;d\xi d\eta,\]
	where $\widehat{d\sigma}$ denotes the Fourier transform of the spherical measure and has the expression
	\begin{equation}\label{measure}
		\widehat{d\sigma}(t\xi,t\eta)=\frac{J_{d-1}(|(t\xi,t\eta)|)}{|(t\xi,t\eta)|^{d-1}}.
	\end{equation}
	Here, $J_{d-1}$ denotes the Bessel function of order $d-1$. The following asymptotic estimate of the Bessel function holds 
	\[J_{d-1}(r)\sim r^{-\frac{1}{2}}\bigg(e^{ir}\sum_{j=0}^\infty a_jr^{-j}+e^{-ir}\sum_{j=0}^\infty b_jr^{-j}\bigg)\quad \text{ as }r\to\infty,\]
	for suitable coefficients $a_j$ and $b_j$, such that for all nonnegative integers $M$ and $\alpha$
	\begin{equation}\label{asymptotics}
		\bigg(\frac{d}{dr}\bigg)^\alpha \Bigg[J_{d-1}(r)-r^{-\frac{1}{2}}\bigg(e^{ir}\sum_{j=0}^M a_jr^{-j}+e^{-ir}\sum_{j=0}^M b_jr^{-j}\bigg)\Bigg]=O\big(r^{-\alpha-\frac{M+1}{2}}\big),
	\end{equation}
	as $r\to\infty$. We refer to [\cite{book-Stein}, page 338] for details.
	
	In view of the asymptotic estimates of Bessel functions, observe that the main term in the multiplier corresponding to the operator $\mathcal A_t$ behaves like $e^{i |t(\xi,\eta)|}|t(\xi,\eta)|^{-\frac{2d-1}{2}}.$ The remaining terms satisfy higher order decay, and hence we need to focus on the main term only. 
	Therefore, we prove the desired $L^p-$boundedness of the maximal operator corresponding to bilinear operators defined by 
	\[T^{\mathbf{n}}_{t,k}(f_1,f_2)(x)=\int_{\R^d\times\R^d}m(t\xi,t\eta)\widehat{\psi}\left(2^{-k-n_1} \xi\right)\widehat{\psi}\left(2^{-k-n_2} \eta\right)\widehat{f_1}(\xi)\widehat{f_2}(\eta)e^{ix\cdot(\xi+\eta)}\;d\xi d\eta,\]
	where $t\in E_k$ and $m(\xi,\eta)=e^{it|(\xi,\eta)|}|(\xi,\eta)|^{-\frac{2d-1}{2}}$. 
	
	First, we prove the required estimates for the underlying operators for the case of unit scale, that is $k=0$. The analogous results for the general dyadic scale $2^k$, are derived using a standard scaling argument.
	
	Let us use the notation $T^{\mathbf{n}}_{t}=T^{\mathbf{n}}_{t,0},$ where $t\in E_0$. Due to the lack of additional decay in $|(\xi,\eta)|$ for the derivative of the term $e^{it|(\xi,\eta)|}$ in the multiplier $m$, we need to decompose the exponential term further.
	
	Observe that the support of $\widehat{\psi}$ implies that $2^{n_1-1}\leq|\xi|\leq2^{n_1+1}$ and $2^{n_2-1}\leq|\eta|\leq2^{n_2+1}$. For a given integer $N\in [2^{n_1-1},2^{n_1+1}],$ we write the exponential term as follows 
	\[
	e^{it|(\xi,\eta)|}=e^{it\Phi_N(\xi,\eta)}e^{it\sqrt{N^2+|\eta|^2}},\]
	where
	\[\Phi_N(\xi,\eta):=|(\xi,\eta)|-\sqrt{N^2+|\eta|^2}=\frac{(|\xi|-N)(|\xi|+N)}{\sqrt{|\xi|^2+|\eta|^2}+\sqrt{N^2+|\eta|^2}}.\]
	Observe that the partial derivative of $\Phi_N$ with respect to $\eta$ is given by
	\[\nabla_\eta\Phi_N(\xi,\eta)=\frac{-(|\xi|-N)(|\xi|+N)}{\big(\sqrt{|\xi|^2+|\eta|^2}+\sqrt{N^2+|\eta|^2}\big)^2}\cdot \Big(\frac{\eta}{\sqrt{|\xi|^2+|\eta|^2}}+\frac{\eta}{\sqrt{N^2+|\eta|^2}}\Big).\]
	Therefore, for $\xi$ such that $N\leq|\xi|\leq N+1$, it is easy to verify that the estimate 
	\begin{equation}\label{deri}
		|\partial^\alpha_\eta\Phi_N(\xi,\eta)|\lesssim\min\{1,|\eta|^{-|\alpha|}\},
	\end{equation}
	holds for all multi-indices $\alpha$ uniformly in $\xi$. 
	
	Let $\mathlarger{\chi}_{_N}$ denote the characteristic function of the interval $[N, N+1)$. Define 
	\begin{equation}\label{operatorF}
		F_Nf(x) :=\int_{\mathbb{R}^{d}} \widehat{f}(\xi) \mathlarger{\chi}_{_N}(|\xi|) e^{i x \cdot \xi} d \xi.
	\end{equation}
	This gives us the following decomposition of the operator $T^{\mathbf{n}}_{t}.$
	\begin{align*}
		T^{\mathbf{n}}_{t}(f_1,f_2)(x)&
		=\sum_{N=2^{n_1-1}}^{2^{n_1+1}}T^{\mathbf{n}}_{t}(F_Nf_1,f_2)(x)=:\sum_{N=2^{n_1-1}}^{2^{n_1+1}}S^{\mathbf{n}}_{t,N}(f_1,f_2)(x),
	\end{align*}
	where
	\begin{align}\label{operatorTN}
		S^{\mathbf{n}}_{t,N}(f_1,f_2)(x)&=\int_{\R^d\times\R^d}m_{t,N}^{\mathbf{n}}(\xi,\eta)\widehat{F_Nf_1}(\xi)\widehat{U_{t,N}^{n_2}f_2}(\eta)e^{ix\cdot(\xi+\eta)}\;d\xi d\eta,
	\end{align}
	\begin{equation}\label{operatorU}
		\widehat{U_{t,N}^{n_2}f_2}(\eta):=\widehat{f_2}(\eta) \widehat{\psi}\left(2^{-n_2} \eta\right) e^{it\left(\sqrt{N^{2}+|\eta|^{2}}\right)},
	\end{equation}
	and
	\[m_{t,N}^{\mathbf{n}}(\xi, \eta) :=\frac{e^{it\Phi_N(\xi,\eta)}}{|t(\xi,\eta)|^{\frac{2d-1}{2}}}\widehat{\psi}\left(2^{-n_1} \xi\right) \widehat{\zeta}\left(2^{-n_2} \eta\right).\]
	Here $\zeta\in\mathcal{S}(\R^d)$ such that $\widehat{\zeta}$ is supported in a small neighbourhood of $\supp(\psi)$ and $\widehat{\zeta}(\eta)=1$ for $\eta\in\supp(\psi)$.
	
	Since $\widehat{U_{t,N}^{n_2}f_2}(\eta)=\int_{\R^d}U_{t,N}^{n_2}f_2(y)e^{-i\eta\cdot y}dy$, we can rewrite
	\begin{align}\label{factor}
		S^{\mathbf{n}}_{t,N}(f_1,f_2)(x)&=\int_{\R^d\times\R^d}\widehat{F_Nf_1}(\xi)U_{t,N}^{n_2}f_2(y)\Big(\int_{\R^d}m_{t,N}^{\mathbf{n}}(\xi, \eta)e^{i\eta\cdot(x-y)}d\eta\Big)e^{ix\cdot\xi}\;d\xi dy.
	\end{align}
	The derivative of $m_{t,N}^{\mathbf{n}}$ is given by
	\[\partial^\alpha_\eta \big(m_{t,N}^{\mathbf{n}}(\xi, \eta)\big)=\frac{e^{it\Phi_N(\xi,\eta)}}{|t(\xi,\eta)|^{\frac{2d-1}{2}}}\widehat{\psi}\left(2^{-n_1} \xi\right) \left(\widehat{\zeta}\left(2^{-n_2} \eta\right)\partial^\alpha_\eta(t\Phi_N(\xi,\eta))-\frac{\widehat{\zeta}\left(2^{-n_2} \eta\right)}{|(\xi,\eta)|}+2^{-n_2}(\partial^\alpha\widehat{\zeta})\left(2^{-n_2} \eta\right)\right).\]
	Thus, using the estimate \eqref{deri}, we get that
	\begin{align}\label{derivative}
		|\partial^\alpha_\eta \big(m_{t,N}^{\mathbf{n}}(\xi, \eta)\big)|
		&\lesssim C_{\alpha}2^{-\frac{2d-1}{2}|\mathbf{n}|}2^{-|\alpha|n_2},
	\end{align}    
	for all multi-indices $\alpha$ uniformly in $t\in E_0$. Thus, an integration by parts argument gives us
	\begin{align}\label{ip}
		&\int_{\R^d}m_{t,N}^{\mathbf{n}}(\xi, \eta)e^{i\eta\cdot(x-y)}d\eta\\&
		\nonumber=\frac{1}{(1+|2^{n_2}(x-y)|^2)^{\lceil\frac{d}{2}\rceil+1}}\int_{\R^d}(1-2^{2n_2}\Delta_\eta)^{\lceil\frac{d}{2}\rceil+1}\Big(m_{t,N}^{\mathbf{n}}(\xi, \eta)\Big)e^{i\eta\cdot(x-y)}d\eta,
	\end{align}
	where we have used the fact that
	\[(1-2^{2n_2}\Delta_\eta)e^{i\eta\cdot(x-y)}=(1+|2^{n_2}(x-y)|^2)e^{i\eta\cdot(x-y)}.\]
	Substituting \eqref{ip} into \eqref{factor}, we get
	\begin{align*}
		S^{\mathbf{n}}_{t,N}(f_1,f_2)(x)&=\int_{\mathbb{R}^{d}}A_{n_2}\left(U_{t,N}^{n_2}f_2\right)(x,\eta) B^{\mathbf{n}}_{t,N}\left(F_Nf_1\right)(x, \eta) d \eta,
	\end{align*}
	where
	\begin{align}\label{operatorB}
		B^{\mathbf{n}}_{t,N}\left(F_Nf_1\right)(x, \eta):=&\int_{\mathbb{R}^{d}} \left(1-2^{2n_2} \Delta_{\eta}\right)^{\lceil \frac{d}{2}\rceil+1}\left(m_{t,N}^{\mathbf{n}}(\xi, \eta)\right)\widehat{F_Nf_1}(\xi)e^{ix \cdot \xi} d \xi,
	\end{align}
	and
	\[A_{n_2}\left(U^{n_2}_{t,N}f_2\right)(x,\eta):=\int_{\mathbb{R}^{d}} \frac{U^{n_2}_{t,N}f_2(y)e^{i\eta\cdot(x-y)}}{\left(1+\left|2^{n_2}(x-y)\right|^{2}\right)^{\lceil \frac{d}{2}\rceil+1}} d y.\]
	Note that
	\begin{align*}
		\left|A_{n_2}\left(U^{n_2}_{t,N}f_2\right)(x,\eta)\right|&\leq\int_{\mathbb{R}^{d}} \frac{|U^{n_2}_{t,N}f_2(y)|}{\left(1+\left|2^{n_2}(x-y)\right|^{2}\right)^{\lceil \frac{d}{2}\rceil+1}} d y\\
		&:=L_{n_2}\left(U_{t,N}^{n_2}f_2\right)(x).
	\end{align*}
	This gives us the following bound
	\begin{equation}\label{decomposition}
		|S^{\mathbf{n}}_{t,N}(f_1,f_2)(x)|\leq L_{n_2}\left(U_{t,N}^{n_2}f_2\right)(x) \int_{\mathbb{R}^{d}}\left|B^{\mathbf{n}}_{t,N}\left(F_Nf_1\right)(x, \eta)\right| d \eta.
	\end{equation}
	The following $L^p-$estimates hold for the operators associated with $U_{t,N}^{n_2}$ and $B^{\mathbf{n}}_{t,N}$ introduced as above. 
	\begin{lemma}\label{bn}
		Consider the interval $I\subset[1,2]$. For $\mathbf{n}\in\N^2$ and $N>0$, define the operator $B^{\mathbf{n}}_{s, N}(f)$ as specified in \eqref{operatorB}. Then, we have
		\[\int_{\mathbb{R}^{d}} \left\|\sup_{s\in I}\left|B^{\mathbf{n}}_{s, N}\left(f\right)(\cdot, \eta)\right|\right\|_{L^2} d \eta\lesssim|I|^{\frac{1}{2}}2^{-|\mathbf{n}|\frac{2d-1}{2}}2^{n_2d}\|f\|_{L^2}.\]
	\end{lemma}
	\begin{proof}
		Using the Sobolev embedding (see \cite[Lemma 1.3]{Lee1}), we have
		\begin{align*}
			&\left\|\sup_{s\in I}\left|B^{\mathbf{n}}_{s, N}\left(f\right)(\cdot, \eta)\right|\right\|_{L^2}\\
			&\lesssim\left\|\left(\int_{I}\left|B^{\mathbf{n}}_{s, N}\left(f\right)(\cdot, \eta)\right|^2ds\right)^{\frac{1}{2}}\right\|_{L^2}+\left\|\left(\int_{I}\left|B^{\mathbf{n}}_{s, N}\left(f\right)(\cdot, \eta)\right|^2ds\right)^{\frac{1}{2}}\right\|_{L^2}^{\frac{1}{2}}\left\|\left(\int_{I}\left|\frac{\partial}{\partial s}B^{\mathbf{n}}_{s, N}\left(f\right)(\cdot, \eta)\right|^2ds\right)^{\frac{1}{2}}\right\|_{L^2}^{\frac{1}{2}}.
		\end{align*}
		Using the estimate \eqref{derivative}, we have
		\begin{align*}
			\bigg|(1-2^{2n_2}\Delta_\eta)^{\lceil\frac{d}{2}\rceil+1}\Big(m_{s,N}^{\mathbf{n}}(\xi, \eta)\Big)\bigg|&\lesssim2^{-|\mathbf{n}|\frac{2d-1}{2}}\mathlarger{\chi}_{\{2^{n_2-1}\leq|\eta|\leq2^{n_2+1}\}}(\eta)\\
			\bigg|\frac{\partial}{\partial s}\Big[(1-2^{2n_2}\Delta_\eta)^{\lceil\frac{d}{2}\rceil+1}\big(m_{s,N}^{\mathbf{n}}(\xi, \eta)\big)\Big]\bigg|&\lesssim2^{-|\mathbf{n}|\frac{2d-1}{2}}\mathlarger{\chi}_{\{2^{n_2-1}\leq|\eta|\leq2^{n_2+1}\}}(\eta),
		\end{align*}
		uniformly in $s\in I$ and $\xi$ such that $N\leq\xi<N+1$. Consequently, the Plancherel theorem gives us
		\begin{align*}
			\left\|\left(\int_{I}\left|B^{\mathbf{n}}_{s, N}\left(f\right)(\cdot, \eta)\right|^2ds\right)^{\frac{1}{2}}\right\|_{L^2}&\lesssim|I|^{\frac{1}{2}}2^{-|\mathbf{n}|\frac{2d-1}{2}}\mathlarger{\chi}_{\{2^{n_2-1}\leq|\eta|\leq2^{n_2+1}\}}(\eta)\|f\|_{L^2}\\
			\left\|\left(\int_{I}\left|\frac{\partial}{\partial s}B^{\mathbf{n}}_{s, N}\left(f\right)(\cdot, \eta)\right|^2ds\right)^{\frac{1}{2}}\right\|_{L^2}&\lesssim|I|^{\frac{1}{2}}2^{-|\mathbf{n}|\frac{2d-1}{2}}\mathlarger{\chi}_{\{2^{n_2-1}\leq|\eta|\leq2^{n_2+1}\}}(\eta)\|f\|_{L^2}.
		\end{align*}
		Therefore, by integrating with respect to $\eta$, we complete the proof of \Cref{bn}.
	\end{proof}
	
	\begin{lemma}[\cite{HHY}, Lemma 2.2]\label{u1}
		Let $I\subset[1,2]$. For $n_2\in\N$ and $N>0$, let $U_{s,N}^{n_2}g$ be defined as in \eqref{operatorU}. Then, we have
		\[\left\|\bigg(\int_{I} \big|U_{s,N}^{n_2}g\big|^2 ds\bigg)^{\frac{1}{2}}\right\|_{L^1}\lesssim|I|^{\frac{1}{2}}2^{n_2\frac{d}{2}}\|g\|_{L^1}.\]
	\end{lemma}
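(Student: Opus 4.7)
The plan is to reduce the square‑function bound for $U^{n_2}_{s,N}g$ to a square‑function bound on its convolution kernel via Minkowski's integral inequality, and then to exploit the fact that the phase $s\sqrt{N^2+|\eta|^2}$ corresponds to a propagator with group velocity bounded by $1$, so that the kernel is essentially localized to a ball of bounded radius.

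Write $U^{n_2}_{s,N}g=K^{n_2}_{s,N}\ast g$ with
\[K^{n_2}_{s,N}(x)=\int_{\R^d}\widehat{\psi}(2^{-n_2}\eta)\,e^{2\pi i(s\sqrt{N^2+|\eta|^2}+x\cdot\eta)}\,d\eta.\]
By Minkowski's integral inequality,
\[\Big\|\Big(\int_I|U^{n_2}_{s,N}g|^2\,ds\Big)^{1/2}\Big\|_{L^1_x}\leq\|g\|_{L^1}\cdot\Big\|\Big(\int_I|K^{n_2}_{s,N}|^2\,ds\Big)^{1/2}\Big\|_{L^1_x},\]
so it suffices to prove the kernel bound $\big\|(\int_I|K^{n_2}_{s,N}|^2ds)^{1/2}\big\|_{L^1_x}\lesssim|I|^{1/2}2^{n_2d/2}$. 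I split $\R^d=B(0,4)\cup B(0,4)^c$. On $B(0,4)$, Cauchy–Schwarz followed by Plancherel in $x$ yields
\[\int_{B(0,4)}\Big(\int_I|K^{n_2}_{s,N}|^2\,ds\Big)^{1/2}dx\leq|B(0,4)|^{1/2}\Big(|I|\int_{\R^d}|\widehat{\psi}(2^{-n_2}\eta)|^2\,d\eta\Big)^{1/2}\lesssim|I|^{1/2}2^{n_2d/2},\]
where I have used $|e^{2\pi is\sqrt{N^2+|\eta|^2}}|=1$ to make $\|K^{n_2}_{s,N}\|_{L^2_x}$ uniform in $s$ and $N$. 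This already matches the target.

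For the exterior region $|x|\geq 4$, the plan is to establish rapid pointwise decay of $K^{n_2}_{s,N}$ by non‑stationary phase. After rescaling $\eta=2^{n_2}\zeta$, the phase becomes $\Phi(\zeta)=s\sqrt{N^2+2^{2n_2}|\zeta|^2}+2^{n_2}x\cdot\zeta$ with gradient $\nabla_\zeta\Phi=2^{n_2}(sG(\zeta)+x)$, where $G(\zeta)=2^{n_2}\zeta/\sqrt{N^2+2^{2n_2}|\zeta|^2}$. Since $|G(\zeta)|\leq 1$, one obtains $|\nabla_\zeta\Phi|\geq 2^{n_2}|x|/2$ on $\supp\widehat{\psi}$ whenever $|x|\geq 4$, while higher $\zeta$‑derivatives of $\Phi$ stay $O(2^{n_2})$ uniformly in $N$. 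Iterated integration by parts with the operator $L^{\ast}F=-(2\pi i)^{-1}\nabla_\zeta\cdot(|\nabla\Phi|^{-2}\nabla\Phi\,F)$, applied $M$ times, then gives
\[|K^{n_2}_{s,N}(x)|\lesssim_M 2^{n_2(d-M)}|x|^{-M},\qquad|x|\geq 4,\]
uniformly in $s\in[1,2]$ and $N>0$. Taking $M$ large and applying Hölder in $s$, the exterior contribution is bounded by $|I|^{1/2}2^{-n_2}$, which is absorbed in the target.

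The main technical obstacle will be the integration‑by‑parts step: one must verify that derivatives of the Klein–Gordon–type phase $\sqrt{N^2+|\eta|^2}$ remain uniformly controlled across the two regimes $N\gg 2^{n_2}$ (where the propagator behaves Schrödinger‑like) and $N\ll 2^{n_2}$ (where it behaves like a half‑wave). In both regimes the key physical fact is that the group velocity $|\eta|/\sqrt{N^2+|\eta|^2}$ is bounded by $1$, which is precisely what allows the decomposition at the fixed radius $|x|=4$ independent of $N$ and $n_2$.
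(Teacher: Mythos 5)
Your argument is correct and is essentially the standard kernel–estimate plus non-stationary-phase route that one expects behind the cited Lemma 2.2 of Heo, Hong and Yang; since the present paper simply defers the proof to \cite{HHY} and remarks that the $I=[1,2]$ argument adapts to general $I\subset[1,2]$, your write-up is in effect a self-contained reconstruction of that proof. The reduction via Minkowski's inequality to the kernel bound $\|(\int_I|K^{n_2}_{s,N}|^2\,ds)^{1/2}\|_{L^1_x}\lesssim |I|^{1/2}2^{n_2 d/2}$, the interior estimate by Cauchy--Schwarz and Plancherel (using $|e^{2\pi i s\sqrt{N^2+|\eta|^2}}|=1$ so that $\|K^{n_2}_{s,N}\|_{L^2_x}=2^{n_2 d/2}\|\widehat\psi\|_{L^2}$ uniformly in $s,N$), and the exterior estimate by integration by parts against the rescaled phase are all sound. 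The technical point you flag is indeed the only one requiring care, and it does go through: after $\eta=2^{n_2}\zeta$, writing $h(\zeta)=\sqrt{N^2+2^{2n_2}|\zeta|^2}$, one has $h\geq 2^{n_2}|\zeta|\gtrsim 2^{n_2}$ on $\supp\widehat\psi$, whence $|\partial_\zeta^\alpha h|\lesssim 2^{n_2}$ for all $|\alpha|\geq 1$ uniformly in $N>0$, while $\nabla_\zeta\Phi=2^{n_2}(sG(\zeta)+x)$ with $|G|\leq1$ gives $|\nabla_\zeta\Phi|\gtrsim 2^{n_2}|x|$ for $|x|\geq4$; so each application of the adjoint operator $L^{*}$ gains a factor $(2^{n_2}|x|)^{-1}$, and with $M=d+1$ the exterior contribution is $O(|I|^{1/2}2^{-n_2})$, absorbed by the target. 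The bound is uniform over $I\subset[1,2]$ exactly because the kernel estimates are uniform in $s\in[1,2]$, which is the point the authors say makes the extension from $I=[1,2]$ routine.
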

	\Cref{u1} was proved for $I=[1,2]$ in \cite{HHY}. The same argument can be adapted for any interval $I\subset[1,2]$ to obtain the desired bounds in \Cref{u1}. We skip the proof of \Cref{u1} to avoid repetition. With the decomposition of the multiplier and the results described above, we are in a position to prove the desired $L^p$-estimates of the local maximal function $\mathcal{M}^{\mathbf{n}}_0(f_1,f_2)$ in the following section.
	
	\section{Estimate for local maximal operator $\mathcal M_{E_0}$}\label{local}
	Note that in view of the reduction step using Littlewood-Paley decomposition, see~\Cref{sec:reductionlp} and \Cref{trivialestimates}, we need to prove desired estimates for the maximal operator 
	\[\mathcal{M}^{\mathbf{n}}_0(f_1,f_2)(x)=\sup_{t\in E_0}\Big|\mathcal{A}_{t}(R_{n_1}f_1,R_{n_2}f_2)(x)\Big|.\]
	We have the following results for $\mathcal{M}^{\mathbf{n}}_0$.
	\begin{lemma}\label{l2}
		Let $d\geq1$ and $0\leq \beta\leq1$. Then for any $\mathbf{n}\in\N^2$, we have
		$$\Vert \mathcal{M}^{\mathbf{n}}_0(f_1,f_2)\Vert_{L^1}\lesssim 2^{-|\mathbf{n}|\frac{2d-1-2\beta}{2}}\Vert f_1\Vert_{L^2}\Vert f_2\Vert_{L^2}.$$
	\end{lemma}
	\begin{lemma}\label{l2/3}
		Let $d\geq 3$ and $0\leq \beta\leq1$. Then for any $\mathbf{n}\in\N^2$, we have
		\begin{enumerate}
			\item $\left\|\mathcal{M}^{\mathbf{n}}_0(f_1,f_2)\right\|_{L^{2/3}}\lesssim\min\left\{2^{-|\mathbf{n}|\frac{d-2-3\beta}{2}},2^{-|\mathbf{n}|\frac{d-3-\beta}{2}}\right\}\|f_1\|_{L^1}\|f_2\|_{L^2},$
			\item $\left\|\mathcal{M}^{\mathbf{n}}_0(f_1,f_2)\right\|_{L^{2/3}}\lesssim\min\left\{2^{-|\mathbf{n}|\frac{d-2-3\beta}{2}},2^{-|\mathbf{n}|\frac{d-3-\beta}{2}}\right\}\|f_1\|_{L^2}\|f_2\|_{L^1}.$
		\end{enumerate}
	\end{lemma}
	
	\begin{lemma}\label{l1}
		Let $d\geq1$ and $0\leq \beta\leq1$. Then for any $\mathbf{n}\in\N^2$, we have
		\begin{equation}\nonumber
			\Vert  \mathcal{M}^{\mathbf{n}}_0(f_1,f_2)\Vert_{L^{1/2}}\lesssim 2^{|\mathbf{n}|\beta}\Vert f_1\Vert_{L^1}\Vert f_2\Vert_{L^1}.
		\end{equation}
	\end{lemma}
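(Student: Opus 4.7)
The plan is to upgrade the $L^2 \times L^2 \to L^1$ decay estimate from \Cref{l2}(1) to the endpoint $L^1 \times L^1 \to L^{1/2}$ estimate, by combining Bernstein's inequality (to convert $L^1$-inputs to $L^2$-inputs on frequency-localized data) with a spatial localization argument (to convert $L^1$-bounds to $L^{1/2}$-bounds using bounded essential support).

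First I would reduce to an $L^1 \times L^1 \to L^1$ bound of the required growth. Since the operator $\mathcal{M}^{\mathbf{n}}_0$ depends on $f_i$ only through the Littlewood--Paley pieces $R_{n_i} f_i$, and since $\|R_{n_i} f_i\|_{L^2} \lesssim 2^{n_i d/2} \|f_i\|_{L^1}$ by Bernstein's inequality (the Fourier support of $R_{n_i} f_i$ lies in the annulus $|\xi|\sim 2^{n_i}$), applying \Cref{l2}(1) yields
\[
\|\mathcal{M}^{\mathbf{n}}_0(f_1,f_2)\|_{L^1} \lesssim 2^{-|\mathbf{n}|\frac{2d-2-\beta}{2}}\cdot 2^{|\mathbf{n}| d/2}\,\|f_1\|_{L^1}\|f_2\|_{L^1} = 2^{|\mathbf{n}|(1 - d/2 + \beta/2)}\|f_1\|_{L^1}\|f_2\|_{L^1},
\]
which is bounded by $2^{|\mathbf{n}|\beta}\|f_1\|_{L^1}\|f_2\|_{L^1}$ for $d \geq 2$ and $\beta \in [0,1]$.

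Next I would decompose $f_i = \sum_{k_i \in \Z^d} f_{i,k_i}$ with each $f_{i,k_i}$ supported on a unit cube $Q_{k_i}$, and analyse the cube-pair contributions $\mathcal{M}^{\mathbf{n}}_0(f_{1,k_1}, f_{2,k_2})$. For $t\in[1,2]$ the constraint $|x-k_1|^2+|x-k_2|^2 \approx t^2 \leq 4$, required for a non-negligible contribution to the bilinear spherical average, forces $x$ into an $O(1)$-ball around $(k_1+k_2)/2$ and forces $|k_1-k_2|$ to be uniformly bounded. The Schwartz tails of the Littlewood--Paley convolution kernels $\eta_{n_i}$ then supply polynomial decay $(1+|k_1-k_2|)^{-N}$ of any order $N$, with constants uniform in $\mathbf{n}$ (using $|\eta_{n_i}(z)|\lesssim |z|^{-M}$ for $|z|\geq 1$, independently of $n_i$, for $M$ large). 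Combining with the elementary inequality $\|g\|_{L^{1/2}} \lesssim |\supp(g)|\cdot\|g\|_{L^1}$ on essentially bounded-support functions then gives
\[
\|\mathcal{M}^{\mathbf{n}}_0(f_{1,k_1},f_{2,k_2})\|_{L^{1/2}} \lesssim 2^{|\mathbf{n}|\beta}(1+|k_1-k_2|)^{-N}\|f_{1,k_1}\|_{L^1}\|f_{2,k_2}\|_{L^1}.
\]

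Finally, using the sublinearity $\mathcal{M}^{\mathbf{n}}_0(f_1,f_2) \leq \sum_{k_1,k_2} \mathcal{M}^{\mathbf{n}}_0(f_{1,k_1},f_{2,k_2})$ together with the $L^{1/2}$-quasi-triangle inequality $\|\sum_n g_n\|_{L^{1/2}}^{1/2} \leq \sum_n \|g_n\|_{L^{1/2}}^{1/2}$, I would assemble the cube-pair bounds via a Schur-test argument on the double sum $\sum_{k_1,k_2}(1+|k_1-k_2|)^{-N/2}\sqrt{\|f_{1,k_1}\|_{L^1}\|f_{2,k_2}\|_{L^1}}$, choosing $N$ large enough that the kernel is in $\ell^1(\Z^d)$, to reach $\|\mathcal{M}^{\mathbf{n}}_0(f_1,f_2)\|_{L^{1/2}} \lesssim 2^{|\mathbf{n}|\beta}\|f_1\|_{L^1}\|f_2\|_{L^1}$. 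The main technical obstacle is the cube-pair estimate: one has to extract a rate of decay in $|k_1-k_2|$ whose implicit constants are independent of the frequency $\mathbf{n}$, which requires carefully balancing the high-frequency concentration of $\eta_{n_i}$ against the uniformly bounded radius and curvature of $\mathbb{S}^{2d-1}$ when propagating Schwartz tails through the bilinear spherical averaging.
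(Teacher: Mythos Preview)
Your Step~1 contains a decisive arithmetic error. In this paper $|\mathbf{n}|=\max(n_1,n_2)$ (this is forced by the multiplier bound $|m(t\xi,t\eta)|\sim 2^{-|\mathbf{n}|(2d-1)/2}$ used in the proof of \Cref{l2}, and by the requirement $2^{-|\mathbf{n}|}\le 2^{-n_i}$ used later). Bernstein gives $\|R_{n_i}f_i\|_{L^2}\lesssim 2^{n_i d/2}\|f_i\|_{L^1}$, so the product contributes $2^{(n_1+n_2)d/2}$, not $2^{|\mathbf{n}|d/2}$. In the diagonal case $n_1=n_2=|\mathbf{n}|$ your bound becomes
\[
2^{-|\mathbf{n}|\frac{2d-2-\beta}{2}}\cdot 2^{|\mathbf{n}|d}=2^{|\mathbf{n}|(1+\beta/2)},
\]
which exceeds $2^{|\mathbf{n}|\beta}$ for every $\beta\le 1$. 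So the route through \Cref{l2}(1) plus Bernstein cannot produce the required $L^1\times L^1\to L^1$ growth $2^{|\mathbf{n}|\beta}$; the Fourier decay of $\widehat{d\sigma}$ (of order $\sim 2^{-|\mathbf{n}|(d-1/2)}$) is simply not strong enough to absorb the full Bernstein loss $2^{(n_1+n_2)d/2}$.

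The paper's proof avoids this loss entirely by not passing through $L^2$ at all. It discretizes $E_0$ into $\lesssim 2^{|\mathbf{n}|\beta}$ intervals $I$ of length $2^{-|\mathbf{n}|}$, observes that the mollified inputs $|f_i|\ast\Psi_{2^{-n_i}}$ are essentially constant as $t$ varies over such an interval (since $\Psi_{2^{-n_i}}(w_1)\lesssim\Psi_{2^{-n_i}}(w_2)$ when $|w_1-w_2|\le 2^{-|\mathbf{n}|}\le 2^{-n_i}$), and then invokes the $O(1)$ bound $\|\mathcal{A}_{t_0}\|_{L^1\times L^1\to L^1}\lesssim 1$ for a \emph{single} scale. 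The only growth in $|\mathbf{n}|$ is the interval count $2^{|\mathbf{n}|\beta}$. The $L^{1/2}$ quasi-norm is handled by partitioning the \emph{output} variable $x$ into unit cubes and applying Cauchy--Schwarz over the cube family; no appeal to ``essentially bounded support'' of Schwartz-tailed functions is needed. Your localization-plus-Schur scheme in Steps~2--3 is in a similar spirit, but even if carried out it would only upgrade an $L^1$ bound to $L^{1/2}$ with an $O(1)$ loss, and so cannot repair the bad exponent produced in Step~1.
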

	\begin{remark}
		We would like to remark that by an application of Bourgain's interpolation argument [\cite[Lemma 2.6]{Lee1}] along with the estimates established in \Cref{l2}, \Cref{l2/3}, and \Cref{l1}, we get that for all measurable sets $F_1$ and $F_2$, the following estimate holds 
		\[||\mathcal{M}_{E_0}(\chi_{F_1},\chi_{F_2})||_{L^{p,\infty}} \lesssim |F_1|^{\frac{1}{p_1}}|F_2|^{\frac{1}{p_2}}\]
		holds whenever the point $(\frac{1}{p_1}, \frac{1}{p_2})$ lies on the line segment $PQ$ or $PR$ in dimensions $d \geq 3$. However, observe that for $p<1$, the estimate above does not allow us to deduce restricted weak-type bounds for the local maximal function $\mathcal{M}_{E_0}(f_1,f_2).$
	\end{remark}
	Note that if we assume that \Cref{l2}, \Cref{l2/3} and \Cref{l1} hold true, then the bilinear analogue of real interpolation theorem, see Theorem 7.2.2 in \cite{modernGrafakos}, yields strong-type $L^p-$estimates 
	\begin{align*}
		\Vert \mathcal{M}^{\mathbf{n}}_0(f_1,f_2)\Vert_{L^p}\lesssim 2^{-|\mathbf{n}|\epsilon}\Vert f_1\Vert_{L^{p_1}}\Vert f_2\Vert_{L^{p_2}},
	\end{align*}
	for some $\epsilon>0$ and $\big(\frac{1}{p_1},\frac{1}{p_2}\big)\in\Omega(O,A,Q,P,R,B)\setminus \{A,B,PQ,PR\}$. Therefore, summing over $n_1$ and $n_2,$ we get the desired strong-type $L^p-$estimates for the operator $\mathcal{M}_{E_0}$.

	Therefore, we are left with proving \Cref{l2}, \Cref{l2/3} and \Cref{l1}.
	\subsection{Proof of \Cref{l2}.}\label{Mn0} From \eqref{measure} and the asymptotics of the Bessel function, we have the following decomposition of $\mathcal{M}^{\mathbf{n}}_0$.
	\begin{equation}\label{localdecomposition}
		\mathcal{M}^{\mathbf{n}}_0(f_1,f_2)(x)\leq \mathbb{I}^{\mathbf{n}}(x)+\mathbb{II}^{\mathbf{n}}(x)+\mathbb{III}^{\mathbf{n}}(x),
	\end{equation}
	where
	\[\mathbb{I}^{\mathbf{n}}(x)=\sup_{t\in E_0}\left|\int_{\R^d\times\R^d}\frac{a_0 e^{it|(\xi,\eta)|}}{(t|(\xi,\eta)|)^{\frac{2d-1}{2}}}\widehat{\psi}\left(2^{-n_1} \xi\right)\widehat{\psi}\left(2^{-n_2} \eta\right)\widehat{f_1}(\xi)\widehat{f_2}(\eta)e^{ix\cdot(\xi+\eta)}~d\xi d\eta\right|,\]
	\[\mathbb{II}^{\mathbf{n}}(x)=\sup_{t\in E_0}\left|\int_{\R^d\times\R^d}\frac{b_0 e^{-it|(\xi,\eta)|}}{(t|(\xi,\eta)|)^{\frac{2d-1}{2}}}\widehat{\psi}\left(2^{-n_1} \xi\right)\widehat{\psi}\left(2^{-n_2} \eta\right)\widehat{f_1}(\xi)\widehat{f_2}(\eta)e^{ix\cdot(\xi+\eta)}~d\xi d\eta\right|,\]
	and
	\[\mathbb{III}^{\mathbf{n}}(x)=\sup_{t\in E_0}\left|\int_{\R^d\times\R^d}\mathfrak{m}_{t}^{\mathbf{n}}(\xi,\eta)\widehat{f_1}(\xi)\widehat{f_2}(\eta)e^{ix\cdot(\xi+\eta)}~d\xi d\eta\right|.\]
	Here $\mathfrak{m}_{t}^{\mathbf{n}}(\xi,\eta)=\Big[\widehat{d\sigma}(t\xi,t\eta)-\Big(\frac{a_0 e^{it|(\xi,\eta)|}}{(t|(\xi,\eta)|)^{\frac{2d-1}{2}}}+\frac{b_0 e^{-it|(\xi,\eta)|}}{(t|(\xi,\eta)|)^{\frac{2d-1}{2}}}\Big)\Big]\widehat{\psi}\left(2^{-n_1} \xi\right)\widehat{\psi}\left(2^{-n_2} \eta\right)$. Using the estimate \eqref{asymptotics} with $M=0$, we have
	\[\big|\partial^{\alpha}_{\xi}\partial^{\beta}_{\eta}\mathfrak{m}_{t}^{\mathbf{n}}(\xi,\eta)\big|\lesssim2^{-|\mathbf{n}|\frac{2d-1}{2}}|\xi|^{-|\alpha|}|\eta|^{-|\beta|}, \text{ uniformly in }t\in E_0.\]
	Since the multiplier $\mathfrak{m}_{t}^{\mathbf{n}}$ is compactly supported away from origin and satisfies the above derivative condition, we get that
	\begin{equation}\label{error}
		\mathbb{III}^{\mathbf{n}}(x)\lesssim 2^{-|\mathbf{n}|\frac{2d-1}{2}}M_{HL}f_1(x)M_{HL}f_2(x).
	\end{equation}
	Therefore, $\mathbb{III}^{\mathbf{n}}$ satisfies the desired bounds in \Cref{l2}. It remains to prove the estimates in \Cref{l2} for $\mathbb{I}^{\mathbf{n}}$ and $\mathbb{II}^{\mathbf{n}}$. We will establish the bounds for $\mathbb{I}^{\mathbf{n}}$, and the same argument can be applied to $\mathbb{II}^{\mathbf{n}}$ to obtain identical bounds.
	
	Let $\mathcal{E}_0$ be the collection of disjoint intervals of length $2^{-|\mathbf{n}|}$ covering $E_0$. Then, we can write
	\begin{align*}
		\mathbb{I}^{\mathbf{n}}(x)&=\sup_{t\in E_0}\left|\int_{\R^d\times\R^d}\frac{a_0 e^{it|(\xi,\eta)|}}{(t|(\xi,\eta)|)^{\frac{2d-1}{2}}}\widehat{\psi}\left(2^{-n_1} \xi\right)\widehat{\psi}\left(2^{-n_2} \eta\right)\Bigg(\sum_{N=2^{n_1-1}}^{2^{n_1+1}}\widehat{F_Nf_1}(\xi)\Bigg)\widehat{f_2}(\eta)e^{ix\cdot(\xi+\eta)}~d\xi d\eta\right|\\
		&\leq |a_0|\sum_{N=2^{n_1-1}}^{2^{n_1+1}}\sup_{I\in\mathcal{E}_0}\sup_{t\in I}\Bigg|S^{\mathbf{n}}_{t,N}(f_1,f_2)(x) \Bigg|,
	\end{align*}
	where the operators $F_N$ and $S^{\mathbf{n}}_{t,N}$ are defined in \eqref{operatorF} and \eqref{operatorTN} respectively.
	
	For $t\in I$, using the Fundamental Theorem of Calculus, we get that 
	\begin{align}\label{FTC}
		&\sup_{t\in I}\Big|S^{\mathbf{n}}_{t,N}(f_1,f_2)(x) \Big|\\
		\nonumber\lesssim&\left(\frac{1}{|I|}\int_I|S^{\mathbf{n}}_{s,N}(f_1,f_2)(x)|^2\;ds\right)^\frac{1}{2}+\left(\int_IS^{\mathbf{n}}_{s,N}(f_1,f_2)(x)\frac{\partial}{\partial s}S^{\mathbf{n}}_{s,N}(f_1,f_2)(x)\;ds\right)^\frac{1}{2}\\
		\nonumber\leq& \left(\frac{1}{|I|}\int_I|S^{\mathbf{n}}_{s,N}(f_1,f_2)(x)|^2\;ds\right)^\frac{1}{2}+\left(\int_I|S^{\mathbf{n}}_{s,N}(f_1,f_2)(x)|^2\;ds\right)^\frac{1}{4}\left(\int_I\left|\frac{\partial}{\partial s}S^{\mathbf{n}}_{s,N}(f_1,f_2)(x)\right|^2\;ds\right)^\frac{1}{4}.
	\end{align}
	Using $\ell^1\hookrightarrow\ell^\infty$ embedding for $\mathcal{E}_0$, we get that 
	\begin{align*}\label{localdeco}
		\mathbb{I}^{\mathbf{n}}(x)\lesssim&\sum_{N=2^{n_1-1}}^{2^{n_1+1}}\sum_{I\in \mathcal{E}_0}2^{\frac{|\mathbf{n}|}{2}}\mathcal{G}^{\mathbf{n}}_{I,N}(f_1,f_2)(x)+\sum_{N=2^{n_1-1}}^{2^{n_1+1}}\sum_{I\in \mathcal{E}_0}\big(\mathcal{G}^{\mathbf{n}}_{I,N}(f_1,f_2)(x)\tilde{\mathcal{G}}^{\mathbf{n}}_{I,N}(f_1,f_2)(x)\big)^{\frac{1}{2}},
	\end{align*}
	where
	\[\mathcal{G}^{\mathbf{n}}_{I,N}(f_1,f_2)(x):=\left(\int_I|S^{\mathbf{n}}_{s,N}(f_1,f_2)(x)|^2\;ds\right)^\frac{1}{2}~\text{ and}\]
	\[\tilde{\mathcal{G}}^{\mathbf{n}}_{I,N}(f_1,f_2)(x):=\left(\int_I\left|\frac{\partial}{\partial s}S^{\mathbf{n}}_{s,N}(f_1,f_2)(x)\right|^2\;ds\right)^\frac{1}{2}.\]    
	Thus, we have 
	\begin{align}\label{MtoG}
		&\left\|\mathbb{I}^{\mathbf{n}}\right\|_{L^p}\\
		&\nonumber\lesssim2^{\frac{|\mathbf{n}|}{2}}\Bigg\|\sum_{N=2^{n_1-1}}^{2^{n_1+1}}\sum_{I\in \mathcal{E}_0}\mathcal{G}^{\mathbf{n}}_{I,N}(f_1,f_2)\Bigg\|_{L^p}+\Bigg\|\sum_{N=2^{n_1-1}}^{2^{n_1+1}}\sum_{I\in \mathcal{E}_0}\mathcal{G}^{\mathbf{n}}_{I,N}(f_1,f_2)\Bigg\|_{L^p}^\frac{1}{2}\Bigg\|\sum_{N=2^{n_1-1}}^{2^{n_1+1}}\sum_{I\in \mathcal{E}_0}\tilde{\mathcal{G}}^{\mathbf{n}}_{I,N}(f_1,f_2)\Bigg\|_{L^p}^\frac{1}{2}.
	\end{align}
	Since the multiplier $2^{-|\mathbf{n}|}\frac{\partial}{\partial s}\frac{ e^{is|(\xi,\eta)|}}{(s|(\xi,\eta)|)^{\frac{2d-1}{2}}}$ behaves like $\frac{ e^{is|(\xi,\eta)|}}{(s|(\xi,\eta)|)^{\frac{2d-1}{2}}}$ for $1\leq s\leq2$, it is enough to prove $L^p-$estimates for the square function $\Big(\int_I|S^{\mathbf{n}}_{s,N}(f_1,f_2)(x)|^2\;ds\Big)^\frac{1}{2}.$ We invoke the decomposition of the operator $S^{\mathbf{n}}_{t,N}$ from \eqref{decomposition} to get that 
	\begin{align*}
		\mathcal{G}^{\mathbf{n}}_{I,N}(f_1,f_2)(x)\lesssim\left(\int_{\mathbb{R}^{d}} \sup_{s\in I}\left|B^{\mathbf{n}}_{s, N}\left(F_{N}f_1\right)(x, \eta)\right| d \eta\right)\bigg(\int_I\left|L_{n_2}\left(U_{s,N}^{n_2}f_2\right)(x)\right|^2\;ds\bigg)^\frac{1}{2}.
	\end{align*}
	Therefore, it is enough to prove $L^p-$estimates for the following operators
	\[f_1\rightarrow \int_{\mathbb{R}^{d}} \sup_{s\in I}\left|B^{\mathbf{n}}_{s, N}\left(F_{N}f_1\right)(x, \eta)\right| d \eta\] 
	and 
	\[f_2\rightarrow \bigg(\int_I\left|L_{n_2}\left(U_{s,N}^{n_2}f_2\right)(x)\right|^2\;ds\bigg)^\frac{1}{2}.\] 
	Observe that using Minkowski's integral inequality, we have
	\begin{align}\label{Lneq}
		\bigg(\int_I\left|L_{n_2}\left(U_{s,N}^{n_2}f_2\right)(x)\right|^2\;ds\bigg)^\frac{1}{2}\leq&\int_{\R^d}\bigg(\int_I\left|U_{s,N}^{n_2}f_2(x-y)\right|^2\;ds\bigg)^\frac{1}{2}\frac{dy}{\left(1+4 \pi^{2}\left|2^{n_2}y\right|^{2}\right)^{\lceil \frac{d}{2}\rceil+1}}.
	\end{align}
	We take $L^2-$norm in the above and use Plancherel's theorem to get that 
	\begin{align}\label{U2}
		\left\|\bigg(\int_I\left|L_{n_2}\left(U_{s,N}^{n_2}f_2\right)\right|^2\;ds\bigg)^\frac{1}{2}\right\|_{L^2}\leq&\left\|\bigg(\int_I\big|U_{s,N}^{n_2}f_2\big|^2\;ds\bigg)^\frac{1}{2}\right\|_{L^2}\int_{\R^d}\frac{dy}{\left(1+4 \pi^{2}\left|2^{n_2}y\right|^{2}\right)^{\lceil \frac{d}{2}\rceil+1}}\\
		\lesssim&\nonumber2^{-n_2d}2^{-\frac{|\mathbf{n}|}{2}}\|f_2\|_{L^2}.
	\end{align}
	
	Next, by Minkowski's integral inequality and an application of \Cref{bn}, we obtain
	\begin{equation}\label{Bn}
		\left\|\int_{\mathbb{R}^{d}} \sup_{s\in I}\left|B^{\mathbf{n}}_{s, N}\left(F_{N}f_1\right)(\cdot, \eta)\right| d \eta\right\|_{L^2}\lesssim2^{-|\mathbf{n}|d}2^{n_2d}\|F_Nf\|_{L^2}.
	\end{equation}
	Using Cauchy Schwarz inequality along with the estimates \eqref{U2} and \eqref{Bn}, we obtain
	\begin{align*}
		\Bigg\|\sum_{N=2^{n_1-1}}^{2^{n_1+1}}\sum_{I\in \mathcal{E}_0}\mathcal{G}^{\mathbf{n}}_{I,N}(f_1,f_2)\Bigg\|_{L^1}\lesssim&2^{-|\mathbf{n}|\frac{2d+1}{2}}\sum_{N=2^{n_1-1}}^{2^{n_1+1}}\sum_{I\in \mathcal{E}_0}\|F_Nf_1\|_{L^2}\|f_2\|_{L^2}\\
		\lesssim&2^{-|\mathbf{n}|\frac{2d-2\beta}{2}}\|f_1\|_{L^2}\|f_2\|_{L^2},
	\end{align*}
	where we used the cardinality bound $\#\mathcal{E}_0\leq 2^{|\mathbf{n}|\beta}$, which is a direct consequence of $E$ having Minkowski dimension $\beta$ (see \Cref{def}).
	
	As described earlier, similar arguments as above, yield the following bounds for the square function involving the derivative term $\frac{\partial}{\partial s}S^{\mathbf{n}}_{t,N}$.
	\begin{align*}
		\Bigg\|\sum_{N=2^{n_1-1}}^{2^{n_1+1}}\sum_{I\in \mathcal{E}_0}\tilde{\mathcal{G}}^{\mathbf{n}}_{I,N}(f_1,f_2)\Bigg\|_{L^1}\lesssim2^{-|\mathbf{n}|\frac{2d-2-2\beta}{2}}\|f_1\|_{L^2}\|f_2\|_{L^2},
	\end{align*}
	Substituting the estimates obtained above in \eqref{MtoG}, we have
	\begin{align*}
		\left\|\mathbb{I}^{\mathbf{n}}\right\|_{L^1}\lesssim& 2^{\frac{|\mathbf{n}|}{2}}2^{-|\mathbf{n}|\frac{2d-2\beta}{2}}\|f_1\|_{L^2}\|f_2\|_{L^2}+\left(2^{-|\mathbf{n}|\frac{2d-2\beta}{2}}\|f_1\|_{L^2}\|f_2\|_{L^2}\right)^{\frac{1}{2}}\left(2^{-|\mathbf{n}|\frac{2d-2-2\beta}{2}}\|f_1\|_{L^2}\|f_2\|_{L^2}\right)^{\frac{1}{2}}\\
		\lesssim &2^{-|\mathbf{n}|\frac{2d-1-2\beta}{2}}\|f_1\|_{L^2}\|f_2\|_{L^2}.
	\end{align*}
	This proves the desired estimates for $\mathbb{I}^{\mathbf{n}}$, and similarly for $\mathbb{II}^{\mathbf{n}}$. These estimates, together with \eqref{error}, complete the proof of \Cref{l2/3}. \qed
	
	\subsection{Proof of \Cref{l2/3}.}
	Based on the decomposition of $\mathcal{M}^{\mathbf{n}}_0$ given in \eqref{localdecomposition} and the estimates for $\mathbb{III}^{\mathbf{n}}$ in \eqref{error}, it is enough to prove the desired estimates in \Cref{l2/3} for $\mathbb{I}^{\mathbf{n}}$ and $\mathbb{II}^{\mathbf{n}}$. Again, we prove the bounds for $\mathbb{I}^{\mathbf{n}}$, and the same bounds follow for $\mathbb{II}^{\mathbf{n}}$ by a similar argument.
	
	The estimate from \eqref{Lneq} along with \Cref{u1} gives us 
	\begin{align}\label{U1}
		\left\|\bigg(\int_I\left|L_{n_2}\left(U_{s,N}^{n_2}f_2\right)\right|^2\;ds\bigg)^\frac{1}{2}\right\|_{L^1}\lesssim2^{-n_2\frac{d}{2}}2^{-\frac{|\mathbf{n}|}{2}}\|f_2\|_{L^1}.
	\end{align}
	Now, applying H\"older's inequality along with the estimates \eqref{Bn} and \eqref{U1}, we obtain
	\begin{align*}
		\Bigg\|\sum_{N=2^{n_1-1}}^{2^{n_1+1}}\sum_{I\in \mathcal{E}_0}\mathcal{G}^{\mathbf{n}}_{I,N}(f_1,f_2)\Bigg\|_{L^{2/3}}&\lesssim2^{-|\mathbf{n}|\frac{d+1}{2}}\bigg(\sum_{N=2^{n_1-1}}^{2^{n_1+1}}\sum_{I\in \mathcal{E}_0}\|F_Nf_1\|_{L^2}^{\frac{2}{3}}\|f_2\|_{L^1}^{\frac{2}{3}}\bigg)^{\frac{3}{2}}\\
		&\lesssim2^{-|\mathbf{n}|\frac{d-1-3\beta}{2}}\|f_1\|_{L^2}\|f_2\|_{L^1},,
	\end{align*}
	and
	\begin{align*}
		\Bigg\|\sum_{N=2^{n_1-1}}^{2^{n_1+1}}\sum_{I\in \mathcal{E}_0}\tilde{\mathcal{G}}^{\mathbf{n}}_{I,N}(f_1,f_2)\Bigg\|_{L^{2/3}}&\lesssim2^{-|\mathbf{n}|\frac{d-3-3\beta}{2}}\|f_1\|_{L^2}\|f_2\|_{L^1}. 
	\end{align*}
	Combining the estimates for square functions together with equation \eqref{MtoG}, we obtain 
	\begin{align}\label{In1}
		\left\|\mathbb{I}^{\mathbf{n}}\right\|_{L^{2/3}}\lesssim2^{-|\mathbf{n}|\frac{d-2-3\beta}{2}}\|f_1\|_{L^2}\|f_2\|_{L^1}.
	\end{align}
	
	This proves one of the bounds in \Cref{l2/3} for $\mathbb{I}^{\mathbf{n}}$. For the other bound, the $\ell^2\hookrightarrow\ell^\infty$ embedding for $\mathcal{E}_0$ along with \eqref{FTC} gives us 
	\begin{align}\label{localdeco2}
		\mathbb{I}^{\mathbf{n}}(x)\lesssim&\sum_{N=2^{n_1-1}}^{2^{n_1+1}}2^{\frac{|\mathbf{n}|}{2}}\mathcal{H}^{\mathbf{n}}_{0,N}(f_1,f_2)(x)+\sum_{N=2^{n_1-1}}^{2^{n_1+1}}\big(\mathcal{H}^{\mathbf{n}}_{0,N}(f_1,f_2)(x)\tilde{\mathcal{H}}^{\mathbf{n}}_{0,N}(f_1,f_2)(x)\big)^{\frac{1}{2}},
	\end{align}
	where
	\[\mathcal{H}^{\mathbf{n}}_{0,N}(f_1,f_2)(x):=\left(\sum_{I\in \mathcal{E}_0}\int_I|S^{\mathbf{n}}_{s,N}(f_1,f_2)(x)|^2\;ds\right)^\frac{1}{2}~\text{ and}\]
	\[\tilde{\mathcal{H}}^{\mathbf{n}}_{0,N}(f_1,f_2)(x):=\left(\sum_{I\in \mathcal{E}_0}\int_I\left|\frac{\partial}{\partial s}S^{\mathbf{n}}_{s,N}(f_1,f_2)(x)\right|^2\;ds\right)^\frac{1}{2}.\]
	Using the decomposition of the operator $S^{\mathbf{n}}_{t,N}$ from \eqref{decomposition}, we get that 
	\begin{align*}
		\mathcal{H}^{\mathbf{n}}_{0,N}(f_1,f_2)(x)\lesssim\left(\int_{\mathbb{R}^{d}} \sup_{1 \leq s\leq 2}\left|B^{\mathbf{n}}_{s, N}\left(F_{N}f_1\right)(x, \eta)\right| d \eta\right)\bigg(\sum_{I\in \mathcal{E}_0}\int_I\left|L_{n_2}\left(U_{s,N}^{n_2}f_2\right)(x)\right|^2\;ds\bigg)^\frac{1}{2}.
	\end{align*}
	Therefore, it is enough to prove $L^p-$estimates for the following operators
	\[f_1\rightarrow \int_{\mathbb{R}^{d}} \sup_{1 \leq s\leq 2}\left|B^{\mathbf{n}}_{s, N}\left(F_{N}f_1\right)(x, \eta)\right| d \eta\] 
	and 
	\[f_2\rightarrow \bigg(\sum_{I\in \mathcal{E}_0}\int_I\left|L_{n_2}\left(U_{s,N}^{n_2}f_2\right)(x)\right|^2\;ds\bigg)^\frac{1}{2}.\]
	Using Minkowski's integral inequality along with \Cref{u1}, we get that
	\begin{align}\label{U12}
		\left\|\bigg(\sum_{I\in \mathcal{E}_0}\int_I\left|L_{n_2}\left(U_{s,N}^{n_2}f_2\right)\right|^2\;ds\bigg)^\frac{1}{2}\right\|_{L^1}\leq&2^{|\mathbf{n}|\frac{\beta}{2}}\left\|\bigg(\int_I\big|U_{s,N}^{n_2}f_2\big|^2\;ds\bigg)^\frac{1}{2}\right\|_{L^1}\int_{\R^d}\frac{dy}{\left(1+4 \pi^{2}\left|2^{n_2}y\right|^{2}\right)^{\lceil \frac{d}{2}\rceil+1}}\\
		\lesssim&\nonumber2^{-n_2d}2^{-|\mathbf{n}|\frac{1-\beta}{2}}\|f_2\|_{L^1}.
	\end{align}
	Next, by Minkowski's integral inequality and an application of \Cref{bn}, we obtain
	\begin{equation}\label{Bn2}
		\left\|\int_{\mathbb{R}^{d}} \sup_{1 \leq s\leq 2}\left|B^{\mathbf{n}}_{s, N}\left(F_{N}f_1\right)(\cdot, \eta)\right| d \eta\right\|_{L^2}\lesssim2^{-|\mathbf{n}|\frac{2d-1}{2}}2^{n_2d}\|F_Nf\|_{L^2}.
	\end{equation}
	Using H\"older's inequality along with the estimates \eqref{U12} and \eqref{Bn2}, we obtain
	\begin{align*}
		\Bigg\|\sum_{N=2^{n_1-1}}^{2^{n_1+1}}\mathcal{H}^{\mathbf{n}}_{0,N}(f_1,f_2)\Bigg\|_{L^{2/3}}&\lesssim2^{-|\mathbf{n}|\frac{d-\beta}{2}}\bigg(\sum_{N=2^{n_1-1}}^{2^{n_1+1}}\|F_Nf_1\|_{L^2}^{\frac{2}{3}}\|f_2\|_{L^1}^{\frac{2}{3}}\bigg)^{\frac{3}{2}}\\
		&\lesssim2^{-|\mathbf{n}|\frac{d-2-\beta}{2}}\|f_1\|_{L^2}\|f_2\|_{L^1}. 
	\end{align*}
	Similarly, we have the following bounds for the square function $\tilde{\mathcal{H}}^{\mathbf{n}}_{0,N}$.
	\begin{align*}
		\Bigg\|\sum_{N=2^{n_1-1}}^{2^{n_1+1}}\tilde{\mathcal{H}}^{\mathbf{n}}_{0,N}(f_1,f_2)\Bigg\|_{L^{2/3}}\lesssim2^{-|\mathbf{n}|\frac{d-4-\beta}{2}}\|f_1\|_{L^2}\|f_2\|_{L^1}.
	\end{align*}
	From \eqref{localdeco2}, and $L^p-$estimates of $\mathcal{H}^{\mathbf{n}}_{0,N}$ and $\tilde{\mathcal{H}}^{\mathbf{n}}_{0,N}$ obtained above, we have
	\begin{align}\label{In2}
		\left\|\mathbb{I}^{\mathbf{n}}\right\|_{L^{2/3}}\lesssim2^{-|\mathbf{n}|\frac{d-3-\beta}{2}}\|f_1\|_{L^2}\|f_2\|_{L^1}.
	\end{align}
	Combining the estimates in \eqref{In1} and \eqref{In2}, we get
	\[\left\|\mathbb{I}^{\mathbf{n}}\right\|_{L^{2/3}}\lesssim\min\{2^{-|\mathbf{n}|\frac{d-2-3\beta}{2}},2^{-|\mathbf{n}|\frac{d-3-\beta}{2}}\}\|f_1\|_{L^2}\|f_2\|_{L^1}\]    
	Due to symmetry in $\xi$ and $\eta$ variables, we also have 
	\begin{align*}
		\left\|\mathbb{I}^{\mathbf{n}}\right\|_{L^{2/3}}&\lesssim\min\{2^{-|\mathbf{n}|\frac{d-2-3\beta}{2}},2^{-|\mathbf{n}|\frac{d-3-\beta}{2}}\}\|f_1\|_{L^1}\|f_2\|_{L^2}.
	\end{align*}
	Combining the above estimates for $\mathbb{I}^{\mathbf{n}}$ and similar estimates for $\mathbb{II}^{\mathbf{n}}$ along with \eqref{error} completes the proof of \Cref{l2/3}.
	\qed	
	\subsection{Proof of \Cref{l1}.}
	Let $\Psi_{2^{-i}}(w)=2^{id}(2+2^i|w|)^{-(d+1)}$ so that $|\psi_{2^{-i}}(w)|\lesssim\Psi_{2^{-i}}(w)$. We note that $\Psi_{2^{-i}}(w_1)\lesssim\Psi_{2^{-i}}(w_2)$ whenever $|w_1-w_2|\leq2^{-i}$. Indeed, when $|w_1-w_2|<|w_2|$, we have
	\begin{align*}
		\Psi_{2^{-i}}(w_1)&=\Psi_{2^{-i}}(w_1-w_2+w_2)\\
		&=2^{id}(2+2^{i}|w_1-w_2+w_2|)^{-(d+1)}\\
		&\leq 2^{id}(2+2^{i}(|w_2|-|w_1-w_2|))^{-(d+1)}\\
		&\leq 2^{id}(1+2^{i}|w_2|)^{-(d+1)}= \Psi_{2^{-i}}(w_2).
	\end{align*}
	On the hand hand, when $|w_2|\leq|w_1-w_2|\leq2^{-i}$, we have
	\begin{align*}
		\Psi_{2^{-i}}(w_1)&\leq 2^{id}(2+2^{i}(|w_1-w_2|-|w_2|))^{-(d+1)}\\
		&\leq 2^{id}\\
		&\lesssim 2^{id}(2+2^{i}|w_2|)^{-(d+1)}= \Psi_{2^{-i}}(w_2).
	\end{align*}
	Next, we consider the partition $\R^d=\cup_{Q\in\mathfrak C}Q$, where $Q\in\mathfrak{C}$ are cubes with unit sidelength and sides parallel to co-ordinate axes. For $a>0,$ let $aQ$ denote the cube which is concentric with $Q$ and have sidelength $a$. With this we can write 
	\begin{align*}
		&\|\mathcal{M}^{\mathbf{n}}_0(f_1,f_2)\|_{L^{1/2}}^\frac{1}{2}\\	
		&=\int_{\R^d}\Bigg(\sup\limits_{t\in E_0}\bigg|\int_{\mathbb{S}^{2d-1}}\bigg(\sum\limits_{Q\in\mathfrak{C}}(f_1*\psi_{2^{-n_1}})\mathlarger{\chi}_{_Q}\bigg)(x-ty)( f_2*\psi_{2^{-n_2}})(x-tz)\;d\sigma(y,z)\bigg|\Bigg)^\frac{1}{2}dx\\
		&\leq\sum_{Q\in\mathfrak{C}}\int_{3Q}\bigg(\sup\limits_{t\in E_0}\int_{\mathbb{S}^{2d-1}}\big((|f_1|*\Psi_{2^{-n_1}})\mathlarger{\chi}_{_Q}\big)(x-ty)\big(( |f_2|*\Psi_{2^{-n_2}})\mathlarger{\chi}_{_{5Q}}\big)(x-tz)\;d\sigma(y,z)\bigg)^\frac{1}{2}dx.
	\end{align*}
	Let $I\in \mathcal{E}_0$ be any interval. Without loss of generality, we may assume  that $I=[t_0-2^{-|\mathbf{n}|-1},t_0+2^{-|\mathbf{n}|-1}]$, where $t_0\in [1,2]$. For any $t\in I$ and $|y|\leq1$, we have
	\begin{align*}
		|f_1|*\Psi_{2^{-n_1}}(x-ty)&=|f_1|*\Psi_{2^{-n_1}}(x-t_0y+(t_0-t)y)\\
		&=\int_{\R^d}|f_1|(z)\Psi_{2^{-n_1}}(x-t_0y+(t_0-t)y-z)\;dz\\
		&\lesssim \int_{\R^d}|f_1|(z)\Psi_{2^{-n_1}}(x-t_0y-z)\;dz\\
		&=|f_1|*\Psi_{2^{-n_1}}(x-t_0y),
	\end{align*}
	where we have used the fact that $\Psi_{2^{-n_1}}(w_1)\lesssim\Psi_{2^{-n_1}}(w_2)$ when $|w_1-w_2|\leq2^{-|\mathbf{n}|}\leq 2^{-n_1}$ in the last inequality. Similarly, for any $|z|\leq1$ and $t\in I$, we have 
	\[|f_2|*\Psi_{2^{-n_2}}(x-tz)\lesssim|f_2|*\Psi_{2^{-n_2}}(x-t_0z).\]
	Using H\"older's inequality and the estimates above, we have
	\begin{align*}
		&\|\mathcal{M}^{\mathbf{n}}_0(f_1,f_2)\|_{L^{1/2}}^\frac{1}{2}\\
		&\lesssim\sum_{Q\in\mathfrak{C}}\bigg(\int_{3Q}\sum_{I\in\mathcal{E}_0}\sup_{t\in I}\int_{\s^{2d-1}}\left((|f_1|*\Psi_{2^{-n_1}})\mathlarger{\chi}_{_{Q}}\right)(x-ty)\left(( |f_2|*\Psi_{2^{-n_2}})\mathlarger{\chi}_{_{5Q}}\right)(x-tz)\;d\sigma(y,z)\;dx\bigg)^\frac{1}{2}\\
		&\lesssim\sum_{Q\in\mathfrak{C}}\bigg(\int_{3Q}\sum_{I\in\mathcal{E}_0}\int_{\s^{2d-1}}\left((|f_1|*\Psi_{2^{-n_1}})\mathlarger{\chi}_{_{2Q}}\right)(x-t_0y)\left(( |f_2|*\Psi_{2^{-n_2}})\mathlarger{\chi}_{_{6Q}}\right)(x-t_0z)\;d\sigma(y,z)\;dx\bigg)^\frac{1}{2}\\
		&\lesssim\sum_{Q\in\mathfrak{C}}\bigg(\sum_{I\in\mathcal{E}_0}\big\|(|f_1|*\Psi_{2^{-n_1}})\mathlarger{\chi}_{_{2Q}}\big\|_{L^1}\;\big\|( |f_2|*\Psi_{2^{-n_2}})\mathlarger{\chi}_{_{6Q}}\big\|_{L^1}\bigg)^\frac{1}{2}\\
		&\leq2^{|\mathbf{n}|\frac{\beta}{2}}\bigg(\sum_{Q\in\mathfrak{C}}\|(|f_1|*\Psi_{2^{-n_1}})\mathlarger{\chi}_{_{2Q}}\|_{L^1}\bigg)^\frac{1}{2}\bigg(\sum_{Q\in\mathfrak{C}}\|(|f_2|*\Psi_{2^{-n_2}})\mathlarger{\chi}_{_{6Q}}\|_{L^1}\bigg)^\frac{1}{2}\\
		&\lesssim2^{|\mathbf{n}|\frac{\beta}{2}}\|f_1\|_{L^1}^\frac{1}{2}\|f_2\|_{L^1}^\frac{1}{2},
	\end{align*}
	where we have used the $L^1\times L^1\to L^1-$boundedness of the single bilinear spherical average $\mathcal A_t$ obtained in \cite{IPS} in the third step and Cauchy-Schwarz inequality in the fourth step of the equation above. This completes the proof of \Cref{l1}.
	\qed
	
	\section{Estimate for global maximal operator: Proof of \Cref{main}}\label{full}
	Recall that in order to complete the proof of \Cref{main} we are required to prove the desired $L^p-$estimates for the operator 
	\[\mathcal{M}^{\mathbf{n}}(f_1,f_2)(x)=\sup_{k\in \Z}\mathcal{M}^{\mathbf{n}}_k(f_1,f_2)(x).\]
	Note that by a change of variable $\xi\to 2^k\xi$ and $\eta\to2^k\eta$, we have
	\begin{align}\label{scaling}
		\mathcal{M}^{\mathbf{n}}_k(f_1,f_2)(x)=&\sup_{t\in E_k}\left|\int_{\R^d\times\R^d}\widehat{d\sigma}(t(\xi,\eta))\widehat{\psi}\left(2^{-k-n_1} \xi\right)\widehat{\psi}\left(2^{-k-n_2} \eta\right)\widehat{f_1}(\xi)\widehat{f_2}(\eta)e^{ix\cdot(\xi+\eta)}~d\xi d\eta\right|\\
		=&\nonumber\sup_{s\in J}\left|\int_{\R^d\times\R^d}\widehat{d\sigma}(s(\xi,\eta))\widehat{\psi}\left(2^{-n_1} \xi\right)\widehat{\psi}\left(2^{-n_2} \eta\right)\widehat{(f_1)_{2^{-k}}}(\xi)\widehat{(f_2)_{2^{-k}}}(\eta)e^{i2^kx\cdot(\xi+\eta)}~d\xi d\eta\right|,
	\end{align}
	where $s=2^kt\in 2^kE_k=J$ with $J\subset[1,2]$ and $(f)_{2^{-k}}=f(2^{-k}x)$.

	Since the notion of Minkowski dimension as described in \Cref{def} is uniform in $k$, the operator $\mathcal{M}^{\mathbf{n}}_k$ satisfies the same bounds as those of $\mathcal{M}^{\mathbf{n}}_0$ proved in \Cref{l2}, \Cref{l2/3} and \Cref{l1} for H\"{o}lder exponents $(p_1,p_2,p)$. Next, we use these estimates for each fixed scale to prove the following results for the operator $\mathcal{M}^{\mathbf{n}}$. 
	\begin{lemma}\label{L2}
		Let $d\geq1$ and $0\leq \beta\leq1$. Then for any $\mathbf{n}\in\N^2$, we have
		\[\Vert \mathcal{M}^{\mathbf{n}}(f_1,f_2)\Vert_{L^1}\lesssim 2^{-|\mathbf{n}|\frac{2d-1-2\beta}{2}}\Vert f_1\Vert_{L^2}\Vert f_2\Vert_{L^2}.\]
	\end{lemma}
	
	\begin{lemma}\label{L2/3}
		Let $d\geq3$ and $0\leq \beta\leq1$. Then for any $\mathbf{n}\in\N^2$, we have
		\begin{enumerate}
			\item $\left\|\mathcal{M}^{\mathbf{n}}(f_1,f_2)\right\|_{L^{2/3,\infty}}\lesssim |\mathbf{n}|^{3} \min\left\{2^{-|\mathbf{n}|\frac{d-2-3\beta}{2}},2^{-|\mathbf{n}|\frac{d-3-\beta}{2}}\right\}\|f_1\|_{L^1}\|f_2\|_{L^2},$
			\item $\left\|\mathcal{M}^{\mathbf{n}}(f_1,f_2)\right\|_{L^{2/3,\infty}}\lesssim |\mathbf{n}|^{3} \min\left\{2^{-|\mathbf{n}|\frac{d-2-3\beta}{2}},2^{-|\mathbf{n}|\frac{d-3-\beta}{2}}\right\}\|f_1\|_{L^2}\|f_2\|_{L^1}$.
		\end{enumerate}
	\end{lemma}
	
	\begin{lemma}\label{L1}
		Let $d\geq1$ and $0\leq \beta\leq1$. Then for any $\mathbf{n}\in\N^2$, we have
		\[\Vert\mathcal{M}^{\mathbf{n}}(f_1,f_2)\Vert_{L^{1/2,\infty}}\lesssim |\mathbf{n}|^4 2^{|\mathbf{n}|\beta}\Vert f_1\Vert_{L^1}\Vert f_2\Vert_{L^1}.\]
	\end{lemma}
	Note that if we assume \Cref{L2}. \Cref{L2/3} and \Cref{L1}, the proof of \Cref{main} follows by arguments similar to those given for the operator $\mathcal{M}_{E_0}$.  
	\subsection{Proof of \Cref{L2}.}
	Recall that $\supp(\widehat{\psi})\subset \big[\frac{1}{2},2\big]$. Let $\boldsymbol{\psi}\in\mathcal S(\R^d)$ such that $\supp(\widehat{\boldsymbol{\psi}})\subset \big[\frac{1}{4},4\big]$ and $\widehat{\boldsymbol{\psi}}(\xi)=1$ for $\xi\in \big[\frac{1}{2},2\big]$. Define
	\[\widehat{\mathbf{R}_{i}f}(\xi)=\hat{f}(\xi)\widehat{\boldsymbol{\psi}}_{2^{-i}}(\xi).\]
	We can rewrite
	\[\mathcal{M}^{\mathbf{n}}_k(f_1,f_2)=\sup_{t\in E_k}\Big|\mathcal{A}_{t}(R_{k+n_1}\circ\mathbf{R}_{k+n_1}f_1,R_{k+n_2}\circ\mathbf{R}_{k+n_2}f_2)(x)\Big|.\]
	Using $\ell_1\hookrightarrow\ell_\infty$ embedding and $L^2\times L^2\to L^1-$estimate from \Cref{l2}, we have
	\begin{align*}
		\left\|\mathcal{M}^{\mathbf{n}}(f_1,f_2)\right\|_{L^1}&\leq \left\|\sum_{k\in\Z}\mathcal{M}^{\mathbf{n}}_k(f_1,f_2)\right\|_{L^1}\\
		&\lesssim \sum_{k\in\Z}2^{-|\mathbf{n}|\frac{2d-1-2\beta}{2}}\|\mathbf{R}_{k+n_1}f_1\|_{L^2}\|\mathbf{R}_{k+n_2}f_2\|_{L^2}\\
		&\leq 2^{-|\mathbf{n}|\frac{2d-1-2\beta}{2}}\left(\sum_{k\in\Z}\|\mathbf{R}_{k+n_1}f_1\|_{L^2}^2\right)^{\frac{1}{2}}\left(\sum_{k\in\Z}\|\mathbf{R}_{k+n_2}f_2\|_{L^2}^2\right)^{\frac{1}{2}}\\
		&\lesssim 2^{-|\mathbf{n}|\frac{2d-1-2\beta}{2}}\|f_1\|_{L^2}\|f_2\|_{L^2}.
	\end{align*}
	This completes the proof of \Cref{L2}.
	\qed
	
	\subsection{Proof of \Cref{L2/3}.}
	Note that due to symmetry, it is enough to prove part $(1)$. The proof is based on a suitable adaptation of the bilinear Calder\'{o}n-Zygmund theory and delicate analysis involving the exceptional set which arises in the Calder\'{o}n-Zygmund decomposition of input function. Without loss of generality we may assume that $\|f_1\|_{L^1}=\|f_2\|_{L^2}=1$. Given $\alpha>0$, we shall prove the following
	\begin{equation*}\label{weak2}
		\Big|\{x:\mathcal{M}^{\mathbf{n}}(f_1,f_2)(x)>\alpha\}\Big|\leq C|\mathbf{n}|^2 \min\left\{2^{-|\mathbf{n}|\frac{d-2-3\beta}{3}},2^{-|\mathbf{n}|\frac{d-3-\beta}{3}}\right\}\alpha^{-\frac{2}{3}}.
	\end{equation*}
	
	We apply the Calder\'{o}n-Zygmund decomposition to $f_1$ at height $\max\left\{2^{|\mathbf{n}|\frac{d-2-3\beta}{3}},2^{|\mathbf{n}|\frac{d-3-\beta}{3}}\right\}\alpha^{\frac{2}{3}}$ to obtain the decomposition $f_1=g_1+h_1$ with a collection $\{Q_{\gamma}\}$ of disjoint cubes such that  
	\[\Vert g_1\Vert_{L^{\infty}}\leq \max\left\{2^{|\mathbf{n}|\frac{d-2-3\beta}{3}},2^{|\mathbf{n}|\frac{d-3-\beta}{3}}\right\}\alpha^{\frac{2}{3}}, ~~h_1=\sum_{\gamma}h_{\gamma},~~supp(h_{\gamma})\subset Q_{\gamma},\]
	\[\Vert h_{\gamma}\Vert_{L^{1}}\lesssim \max\left\{2^{|\mathbf{n}|\frac{d-2-3\beta}{3}},2^{|\mathbf{n}|\frac{d-3-\beta}{3}}\right\}\alpha^{\frac{2}{3}}|Q_{\gamma}|~~ \text{ with}~~ \int_{Q_{\gamma}}h_{\gamma}=0, ~~\text{and}\]
	\[\sum_{\gamma}\big|Q_{\gamma}\big|\leq \min\left\{2^{-|\mathbf{n}|\frac{d-2-3\beta}{3}},2^{-|\mathbf{n}|\frac{d-3-\beta}{3}}\right\}\alpha^{-\frac{2}{3}}.\]
	Note that we have
	\begin{align*}
		\Big|\{x:\mathcal{M}^{\mathbf{n}}(f_1,f_2)(x)>\alpha\}\Big|\leq&\Big|\{x:\mathcal{M}^{\mathbf{n}}(h_1,f_2)(x)>\frac{\alpha}{2}\}\Big|+\Big|\{x:\mathcal{M}^{\mathbf{n}}(g_1,f_2)(x)>\frac{\alpha}{2}\}\Big|.
	\end{align*}
	\subsubsection*{\bf Contribution from $\mathcal{M}^{\mathbf{n}}(g_1,f_2)$: }
	This is the easy part. We use Chebyshev's inequality and \Cref{L2} to see that 
	\begin{align*}
		|\{x\in \mathbb{R}^{d}:\mathcal{M}^{\mathbf{n}}(g_1,f_2)(x)>\frac{\alpha}{2}\}|\lesssim \frac{2^{-|\mathbf{n}|\frac{2d-1-2\beta}{2}}\|g_1\|_{L^2}\|f_2\|_{L^2}}{\alpha}.
	\end{align*}
	Since $\|g_1\|_2\lesssim\max\left\{2^{|\mathbf{n}|\frac{d-2-3\beta}{6}},2^{|\mathbf{n}|\frac{d-3-\beta}{6}}\right\}\alpha^{\frac{1}{3}}$, we obtain
	\begin{align*}
		|\{x\in \mathbb{R}^{d}:\mathcal{M}^{\mathbf{n}}(g_1,f_2)(x)>\frac{\alpha}{2}\}|&\lesssim \max\left\{2^{-|\mathbf{n}|\frac{5d-1-3\beta}{6}},2^{-|\mathbf{n}|\frac{5d-5\beta}{6}}\right\}\alpha^{-\frac{2}{3}}\\
		&\lesssim\min\left\{2^{-|\mathbf{n}|\frac{d-2-3\beta}{3}},2^{-|\mathbf{n}|\frac{d-3-\beta}{3}}\right\}\alpha^{-\frac{2}{3}}.
	\end{align*}
	\subsubsection*{\bf Contribution from $\mathcal{M}^{\mathbf{n}}(h_1,f_2)$:}
	Let $\tilde{Q}_{\gamma_j}$ denote the cube which is concentric with ${Q}_{\gamma_j}$ and have sidelength $l(\tilde{Q}_{\gamma_j})=10l(Q_{\gamma_j})$. We define the exceptional set 
	\[\mathcal{F}:=\Big(\cup_{\gamma}\tilde{Q}_{\gamma}\Big).\]
	Then, $|\mathcal{F}|\leq\min\left\{2^{-|\mathbf{n}|\frac{d-2-3\beta}{3}},2^{-|\mathbf{n}|\frac{d-3-\beta}{3}}\right\}\alpha^{-\frac{2}{3}}$. Thus, by  Chebyshev's inequality, it is enough to show that
	\begin{equation}\label{other}
		\int_{\mathbb{R}^d\setminus \mathcal{F}}|\mathcal{M}^{\mathbf{n}}(h_1,f_2)(x)|^{\frac{2}{3}}~dx\lesssim |\mathbf{n}|^{2} \min\left\{2^{-|\mathbf{n}|\frac{d-2-3\beta}{3}},2^{-|\mathbf{n}|\frac{d-3-\beta}{3}}\right\}.
	\end{equation}
	We organize the function $h_1$ as follows
	\[h_{1}=\sum_{i\in\mathbb{Z}}h^{i}_{1},~~\text{where}~~h^{i}_{1}=\sum_{l(Q_{\gamma})=2^{-i}}h_{\gamma}.\]
	\begin{lemma}\label{badfunction2}
		Let $k,i\in\mathbb{Z}$ and $\mathbf{n}\in\mathbb{N}^2$. Then the following estimate holds uniformly in $k,i$ and $\mathbf{n}$.
		\begin{align*}
			\int\limits_{\mathbb{R}^{d}\setminus \mathcal{F}}\Big|\mathcal{M}^{\mathbf{n}}_k(h^{i}_1,f_2)(x)\Big|^{\frac{2}{3}}dx\lesssim n_2\min\left\{2^{-|\mathbf{n}|\frac{d-2-3\beta}{3}},2^{-|\mathbf{n}|\frac{d-3-\beta}{3}}\right\}\min\left\{1,2^{\frac{2(i-k)}{3}},2^{\frac{2(k+n_1-i)}{3}}\right\}\Vert h^{i}_{1}\Vert^{\frac{2}{3}}_{L^{1}}\Vert f_{2}\Vert^{\frac{2}{3}}_{L^{2}}.
		\end{align*} 
	\end{lemma}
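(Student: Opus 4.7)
The plan is to establish the single bound
\[
\int_{\R^d\setminus\mathcal F}|\mathcal M_k^{\mathbf n}(h_1^i,f_2)(x)|^{2/3}\,dx \;\lesssim\; n_2\, 2^{-|\mathbf n|(d-3-\beta)/3}\cdot X\cdot\|h_1^i\|_{L^1}^{2/3}\|f_2\|_{L^2}^{2/3}
\]
separately for the three choices $X\in\{1,\,2^{2(i-k)/3},\,2^{2(k+n_1-i)/3}\}$, and then take the minimum. Since the operator $\mathcal M_k^{\mathbf n}$ is unit-scale after dyadic rescaling by $2^{-k}$, in every step one can work at scale $k=0$ and insert the factors $2^{-k}$, $2^{i-k}$, $2^{k+n_1-i}$ in their natural roles.

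\textbf{Baseline ($X=1$).} Start from estimate (2) of \Cref{l2} applied to $\mathcal M_k^{\mathbf n}$ via rescaling, and raise it to the $2/3$ power. The logarithmic factor $n_2$ is picked up when decomposing $f_2$ or the kernel of $R_{k+n_2}$ over dyadic distance shells of the form $|x-z_\gamma|\sim 2^{-k+\ell}$, $\ell=0,1,\dots,n_2$, which is forced by the Schwartz tails in the kernel of $L_{n_2}(U^{n_2}_{t,N}f_2)$ from the decomposition in \Cref{multiplier}; the number of scales that genuinely contribute is $O(n_2)$.

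\textbf{Small-cube gain ($i\geq k$).} When $l(Q_\gamma)=2^{-i}$ is smaller than the dilation scale $2^{-k}$, the function $\mathcal A_t(h_\gamma, R_{k+n_2}f_2)(x)$ is, up to a tail negligible by the Schwartz decay of the mollified kernel, supported on an annular shell of inner radius $\sim t\sim 2^{-k}$ and thickness $\sim 2^{-i}$ around the center $z_\gamma$. Fixing $x\in\R^d\setminus\mathcal F$, the cubes $Q_\gamma$ that can contribute must have $|x-z_\gamma|$ in a window of length $2^{-i}$, and bounded overlap of the annuli limits the number of such $\gamma$'s. Applying Hölder's inequality to pair this shell-localization with the $L^2$ bound of \Cref{l2} (after rescaling) converts the volume saving $2^{-k(d-1)}\cdot 2^{-i}$ versus the full ball $B(z_\gamma,t)$ into the gain $2^{2(i-k)/3}$ when one raises to the $2/3$ exponent.

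\textbf{Large-cube gain ($i\leq k+n_1$).} Exploit $\int h_\gamma=0$. Writing $R_{k+n_1}h_\gamma(x)=\int h_\gamma(y)[\psi_{2^{-(k+n_1)}}(x-y)-\psi_{2^{-(k+n_1)}}(x-z_\gamma)]\,dy$ and using the mean value theorem gives the pointwise bound $|R_{k+n_1}h_\gamma(x)|\lesssim 2^{k+n_1-i}\cdot 2^{(k+n_1)d}(1+2^{k+n_1}|x-z_\gamma|)^{-N}\|h_\gamma\|_{L^1}$, an improvement by the factor $2^{k+n_1-i}$ compared to the naïve estimate. Feeding this improvement through the $L^1\times L^2\to L^{2/3}$ machinery of the previous section (with the outer integration confined to $\R^d\setminus\mathcal F$ so that the Schwartz tails of $\psi_{2^{-(k+n_1)}}$ give a convergent geometric series in the distance from $\tilde Q_\gamma$) yields the gain $2^{2(k+n_1-i)/3}$ after raising to the $2/3$ exponent.

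The hardest step is the small-cube gain: the annular-shell support is only approximate and must be extracted from the operator $B^{\mathbf n}_{t,N}\circ F_N$ of \Cref{multiplier} together with the cancellation in $\Phi_N$, so a delicate kernel analysis is needed to show that the ``leakage'' outside the thin shell is absorbed by the Schwartz tails without destroying the decay in $|\mathbf n|$ and without producing more than one logarithmic $n_2$ loss. Once these three bounds are in hand, taking the minimum and reading off the three regimes $i<k$, $k\leq i\leq k+n_1$, $i>k+n_1$ completes the proof.
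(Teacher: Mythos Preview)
Your proposal has the two gain mechanisms attached to the wrong regimes, and the one you call ``small-cube'' does not work as stated.

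First, the bookkeeping: the factor $2^{2(i-k)/3}$ is $<1$ exactly when $i<k$ (large cubes, $\ell(Q_\gamma)=2^{-i}>2^{-k}$), and $2^{2(k+n_1-i)/3}<1$ exactly when $i>k+n_1$ (small cubes). In the ranges you write ($i\geq k$ and $i\leq k+n_1$) the claimed ``gains'' are $\geq 1$, so nothing is proved beyond the baseline. The mean-value argument you give under ``Large-cube gain'' is correct but belongs to the \emph{small}-cube regime $i>k+n_1$: it yields $\|\mathbf R_{k+n_1}h^i_1\|_{L^1}\lesssim 2^{k+n_1-i}\|h^i_1\|_{L^1}$, a gain only there.

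Second, the annular-shell argument you propose does not apply to $\mathcal A_t$. Since $(y,z)\in\mathbb S^{2d-1}$, the first coordinate $y$ ranges over the full unit ball, so $x-ty$ can land anywhere in $B(x,t)$; there is no thin-shell restriction on $|x-z_\gamma|$ coming from $\supp h_\gamma$. The correct mechanism for the large-cube case $i<k$ is different and uses the exceptional set: for $x\notin\mathcal F$, $t\in E_k$, $|y|\leq 1$ one has $\operatorname{dist}(x-ty,\supp h_\gamma)\gtrsim 2^{-i}$ (because $t\leq 2^{-k+1}<2^{-i+1}$ while $x$ is at distance $\gtrsim 2^{-i}$ from $Q_\gamma$). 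Hence $h^i_1*\psi_{2^{-k-n_1}}(x-ty)$ only sees the tail of $\psi_{2^{-k-n_1}}$ beyond distance $2^{-i}$, and one may replace $\psi_{2^{-k-n_1}}$ by the truncated kernel $\varphi^i_{2^{-k-n_1}}=\psi_{2^{-k-n_1}}\chi_{\{|\cdot|\geq 2^{-i}\}}$, with $\|\varphi^i_{2^{-k-n_1}}\|_{L^1}\lesssim 2^{-dn_1}2^{i-k}$. The truncation destroys the frequency localization in the first slot, so one must perform a \emph{fresh} Littlewood--Paley decomposition of $h^i_1*\varphi^i_{2^{-k-n_1}}$ and bound $\sum_{m\geq 0}\mathcal M_k^{(m,n_2)}$ by (the analogue of) \Cref{l2} term by term; this also requires a separate argument for the low-frequency piece $m=0$. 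The sum over $m$ is precisely where the factor $n_2$ appears (the range $0\leq m<n_2$ contributes $n_2$ equal terms); it is not picked up in the baseline or from dyadic distance shells in the kernel of $L_{n_2}$ as you suggest.
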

	We assume \Cref{badfunction2} for a moment and complete the proof. In fact, we can obtain the following estimate in \Cref{badfunction2}.
	\begin{align*}
		&\int\limits_{\mathbb{R}^{d}\setminus \mathcal{F}}\Big|\mathcal{M}^{\mathbf{n}}_k(h^{i}_1,f_2)(x)\Big|^{\frac{2}{3}}dx\\
		&\lesssim n_2\min\left\{2^{-|\mathbf{n}|\frac{d-2-3\beta}{3}},2^{-|\mathbf{n}|\frac{d-3-\beta}{3}}\right\}\min\left\{1,2^{\frac{2(i-k)}{3}},2^{\frac{2(k+n_1-i)}{3}}\right\}\Vert h^{i}_{1}\Vert^{\frac{2}{3}}_{L^{1}}\Vert \mathbf{R}_{k+n_2}f_{2}\Vert^{\frac{2}{3}}_{L^{2}}.
	\end{align*}
	Observe that
	\[\sum_{i\in\Z}\min\big\{1,2^{\frac{2(i-k)}{3}},2^{\frac{2(k+n_1-i)}{3}}\big\}=\sum_{k\in\Z}\min\big\{1,2^{\frac{2(i-k)}{3}},2^{\frac{2(k+n_1-i)}{3}}\big\}\lesssim n_1.\]
	
	Using the embedding $\ell^1\hookrightarrow\ell^\infty$ and the above estimates, we have
	\begin{align*}
		&\int_{\mathbb{R}^d\setminus \mathcal{F}}|\mathcal{M}^{\mathbf{n}}(h_1,f_2)(x)|^{\frac{2}{3}}dx\\
		&\leq \int_{\mathbb{R}^d\setminus \mathcal{F}}\left|\sum_{k\in\Z}\mathcal{M}^{\mathbf{n}}_k\Big(\sum_{i\in\Z}h_1^{i},f_2\Big)(x)\right|^{\frac{2}{3}}dx\\
		&\leq \sum_{k,i\in\Z}\int_{\mathbb{R}^d\setminus \mathcal{F}}\left|\mathcal{M}^{\mathbf{n}}_k\big(h_1^{i},f_2\big)(x)\right|^{\frac{2}{3}}dx\\
		&\lesssim \sum_{k,i\in\Z}n_2\min\left\{2^{-|\mathbf{n}|\frac{d-2-3\beta}{3}},2^{-|\mathbf{n}|\frac{d-3-\beta}{3}}\right\}\min\Big\{1,2^{\frac{2(i-k)}{3}},2^{\frac{2(k+n_1-i)}{3}}\Big\}\Vert h^{i}_{1}\Vert^{\frac{2}{3}}_{L^{1}}\Vert \mathbf{R}_{k+n_2}f_{2}\Vert^{\frac{2}{3}}_{L^{2}}\\
		&\lesssim n_2\min\left\{2^{-|\mathbf{n}|\frac{d-2-3\beta}{3}},2^{-|\mathbf{n}|\frac{d-3-\beta}{3}}\right\}\sum_{k\in\Z}\bigg(\sum_{i\in\Z}\min\Big\{1,2^{\frac{2(i-k)}{3}},2^{\frac{2(k+n_1-i)}{3}}\Big\}\bigg)^{\frac{1}{3}}\\
		&\hspace{3.2 cm}\times\bigg(\sum_{i\in\Z}\min\Big\{1,2^{\frac{2(i-k)}{3}},2^{\frac{2(k+n_1-i)}{3}}\Big\}\Vert h^{i}_{1}\Vert_{L^{1}}\bigg)^{\frac{2}{3}}\Vert \mathbf{R}_{k+n_2}f_{2}\Vert^{\frac{2}{3}}_{L^{2}}\\
		&\leq n_2n_1^{\frac{1}{3}}\min\left\{2^{-|\mathbf{n}|\frac{d-2-3\beta}{3}},2^{-|\mathbf{n}|\frac{d-3-\beta}{3}}\right\}\bigg(\sum_{k\in\Z}\sum_{i\in\Z}\min\Big\{1,2^{\frac{2(i-k)}{3}},2^{\frac{2(k+n_1-i)}{3}}\Big\}\Vert h^{i}_{1}\Vert_{L^{1}}\bigg)^{\frac{2}{3}}\\
		&\hspace{4cm}\times\bigg(\sum_{k\in\Z}\Vert \mathbf{R}_{k+n_2}f_{2}\Vert^2_{L^{2}}\bigg)^{\frac{1}{3}}\\
		&\lesssim n_2n_1\min\left\{2^{-|\mathbf{n}|\frac{d-2-3\beta}{3}},2^{-|\mathbf{n}|\frac{d-3-\beta}{3}}\right\}\bigg(\sum_{i\in\Z}\Vert h^{i}_{1}\Vert_{L^{1}}\bigg)^{\frac{2}{3}}\Vert f_{2}\Vert_{L^{2}}^{\frac{2}{3}}\\
		&\lesssim |\mathbf{n}|^2\min\left\{2^{-|\mathbf{n}|\frac{d-2-3\beta}{3}},2^{-|\mathbf{n}|\frac{d-3-\beta}{3}}\right\},
	\end{align*}
	where we have used H\"older's inequality in the fourth and fifth steps with respect to $i$ and $k$ respectively.
	
	This proves \eqref{other} and completes the proof of part $(1)$ of \Cref{L2/3} under the assumption that \Cref{badfunction2} holds. \qed
	\subsection*{Proof of \Cref{badfunction2}.}
	Using a scaling argument along with part $(2)$ of \Cref{l2}, we get 
	\begin{align}\label{Mnk}
		\int_{\mathbb{R}^{d}\setminus \mathcal{F}}\Big|\mathcal{M}^{\mathbf{n}}_k(h^{i}_1,f_2)(x)\Big|^{\frac{2}{3}}dx&\lesssim\min\left\{2^{-|\mathbf{n}|\frac{d-2-3\beta}{3}},2^{-|\mathbf{n}|\frac{d-3-\beta}{3}}\right\}\Vert \mathbf{R}_{k+n_1}h^{i}_{1}\Vert^{\frac{2}{3}}_{L^{1}}\Vert \mathbf{R}_{k+n_2}f_{2}\Vert^{\frac{2}{3}}_{L^{2}}\\
		&\nonumber\lesssim \min\left\{2^{-|\mathbf{n}|\frac{d-2-3\beta}{3}},2^{-|\mathbf{n}|\frac{d-3-\beta}{3}}\right\}\Vert h^{i}_{1}\Vert^{\frac{2}{3}}_{L^{1}}\Vert f_{2}\Vert^{\frac{2}{3}}_{L^{2}}.
	\end{align}
	Observe that this gives us the desired estimate for the terms satisfying $k\leq i<k+n_1$. 
	
	Next, we consider terms with $i\geq k+n_1$. We use the cancellation condition of  $h_\gamma$ to write
	\begin{align*}
		\mathbf{R}_{k+n_1}h_{\gamma}(x)&=\int_{Q_{\gamma}} \Big(\boldsymbol{\psi}_{2^{-(k+n_1)}}(x-y)-\boldsymbol{\psi}_{2^{-(k+n_1)}}(x-c_{Q})\Big)h_{\gamma}(y)dy\\
		&=\int_{\mathbb{R}^{d}}\int^{1}_{0}\langle \nabla_{y}\boldsymbol{\psi}_{2^{-(k+n_1)}}(x-c_{Q}-t(y-c_{Q})),y-c_{Q}\rangle dt~h_{\gamma}(y) dy, 
	\end{align*}
	where $c_Q$ is the center of $Q_{\gamma}$. We observe that for $y\in Q_{\gamma}$, we have 
	\[\Big|\langle \nabla_{y}\boldsymbol{\psi}_{2^{-(k+n_1)}}(x-c_{Q}-t(y-c_{Q})),y-c_{Q}\rangle\Big|\lesssim {2^{(k+n_1-i)}}|(\nabla_{y}\boldsymbol{\psi})_{2^{-(k+n_1)}}(x-c_{Q}-t(y-c_{Q}))|.\]
	Applying Minkowski's integral inequality, we get 
	\begin{align*}
		\Vert \mathbf{R}_{k+n_1}h^{i}_{1}\Vert_{L^{1}}\lesssim 2^{k+n_1-i}\Vert h^{i}_{1}\Vert_{L^{1}}.
	\end{align*}
	Substituting the above estimate in \eqref{Mnk}, we get the desired bounds for $i\geq k+n_1$.
	
	Finally, for the remaining case when $i<k$, we exploit the restriction of the domain of integration to the complement of $\mathcal{F}$. Recall that $h^{i}_{1}=\sum\limits_{l(Q_{\gamma})=2^{-i}}h_{\gamma}$. Then, for $y'\in \supp(h_{\gamma})$ and $x\notin \mathcal{F}$, we have  
	\[dist(x-ty, y')\geq 2^{2-i}\quad \text{for all }t\in E_k,~ |y|\leq1,\]
	as $2^{-k}<2^{-i}$. Consequently, we have
	\begin{align*}
		&\|\mathcal{M}^{\mathbf{n}}_k(h^{i}_1,f_2)\|_{L^{2/3}(\mathbb{R}^{d}\setminus\mathcal{F})}^\frac{2}{3}\\
		&=\int_{\mathbb{R}^{d}\setminus\mathcal{F}}\sup_{t\in E_k}\Big|\int_{\mathbb{S}^{2d-1}}h_1^i\ast\psi_{2^{-k-n_1}}(x-ty)f_2\ast\psi_{2^{-k-n_2}}(x-tz)~d\sigma(y,z)\Big|^{\frac{2}{3}}dx\\
		&=\int_{\mathbb{R}^{d}\setminus\mathcal{F}}\sup_{t\in E_k}\bigg|\int_{\mathbb{S}^{2d-1}}\Big(\int_{\R^d}h_1^i(y')\psi_{2^{-k-n_1}}(x-ty-y')~dy'\Big)f_2\ast\psi_{2^{-k-n_2}}(x-tz)~d\sigma(y,z)\bigg|^{\frac{2}{3}}dx\\
		&=\int_{\mathbb{R}^{d}\setminus\mathcal{F}}\sup_{t\in E_k}\bigg|\int_{\mathbb{S}^{2d-1}}\Big(\int_{\R^d}h_1^i(y')\big(\psi_{2^{-k-n_1}}\mathlarger{\chi}_{\{|\cdot|\geq 2^{-i}\}}\big)(x-ty-y')~dy'\Big)f_2\ast\psi_{2^{-k-n_2}}(x-tz)~d\sigma(y,z)\bigg|^{\frac{2}{3}}dx\\
		&\leq\bigg\|\sup_{t\in E_k}\Big|\mathcal{A}_t(h^{i}_1\ast\varphi_{2^{-k-n_1}}^i,R_{k+n_2}f_2)\Big|\bigg\|_{L^{2/3}(\mathbb{R}^{d})}^\frac{2}{3},
	\end{align*}
	where $\varphi_{2^{-k-n_1}}^i(w)=\psi_{2^{-k-n_1}}(w)\mathlarger{\chi}_{\{|w|\geq 2^{-i}\}}(w)$.
	
	Using partition of unity from \Cref{sec:reductionlp}, we can decompose
	\[\sup_{t\in E_k}\Big|\mathcal{A}_t(h^{i}_1\ast\varphi_{2^{-k-n_1}}^i,R_{k+n_2}f_2)(x)\Big|\leq\sum_{m\geq0}\mathcal{M}^{(m,n_2)}_{k}(h^{i}_1\ast\varphi_{2^{-k-n_1}}^i,f_2)(x),\]
	where
	\[\mathcal{M}^{(m,n_2)}_{k}(h^{i}_1\ast\varphi_{2^{-k-n_1}}^i,f_2)(x)=\sup_{t\in E_k}\Big|\mathcal{A}_t\big(R_{k+m}(h^{i}_1\ast\varphi_{2^{-k-n_1}}^i),R_{k+n_2}f_2\big)(x)\Big|\]
	for $m\geq1$ and
	\[\mathcal{M}^{(0,n_2)}_{k}(h^{i}_1\ast\varphi_{2^{-k-n_1}}^i,f_2)(x)=\sup_{t\in E_k}\Big|\mathcal{A}_t\big(P_k(h^{i}_1\ast\varphi_{2^{-k-n_1}}^i),R_{k+n_2}f_2\big)(x)\Big|.\]
	For $m\geq0$, we claim that
	\begin{equation}\label{LPbad}
		\|\mathcal{M}^{(m,n_2)}_{k}(h^{i}_1\ast\varphi_{2^{-k-n_1}}^i,f_2)\|_{L^{2/3}}\lesssim\min\left\{2^{-|(m,n_2)|\frac{d-2-3\beta}{2}},2^{-|(m,n_2)|\frac{d-3-\beta}{2}}\right\}\|h^{i}_1\ast\varphi_{2^{-k-n_1}}^i\|_{L^1}\|f_2\|_{L^2}.
	\end{equation}
	Let us assume the estimate \eqref{LPbad} for a moment. Since $\psi\in\mathcal S(\R^d)$, we have $|\psi(w)|\lesssim_N(1+|w|)^{-N}$ for every $N>1$. Consequently, for $N\geq d+1$, we get that
	\begin{align*}
		\Vert \varphi_{2^{-k-n_1}}^i\Vert_{L^1}&=\int_{|w|\geq 2^{-i}}\psi_{2^{-k-n_1}}(w)\;dw\\
		&\lesssim\int_{|w|\geq 2^{-i}}2^{(k+n_1)d}(2+2^{k+n_1}|w|)^{-N}\;dw\\
		&=\int_{|w|\geq 2^{-i+k+n_1}}(2+|w|)^{-N}\;dw\\
		&\lesssim 2^{(N-d)(i-n_1-k)}\\
		&\leq 2^{-(N-d)n_1}2^{i-k},
	\end{align*}
	where the last inequality follows from the condition $i<k$.	We choose $N=2d$ to obtain
	\begin{align*}
		\|\mathcal{M}^{\mathbf{n}}_k(h^{i}_1,f_2)\|_{L^{2/3}(\mathbb{R}^{d}\setminus\mathcal{F})}^\frac{2}{3}
		&\leq\sum_{m\geq0}\|\mathcal{M}^{(m,n_2)}_{k}(h^{i}_1\ast\varphi_{2^{-k-n_1}}^i,f_2)\|_{L^{2/3}(\R^d)}^\frac{2}{3}\\
		&\lesssim\sum_{m\geq0}\min\left\{2^{-|(m,n_2)|\frac{d-2-3\beta}{3}},2^{-|(m,n_2)|\frac{d-3-\beta}{3}}\right\}\|h^{i}_1\ast\varphi_{2^{-k-n_1}}^i\|_{L^1}^\frac{2}{3}\|f_2\|_{L^2}^\frac{2}{3}\\
		&\lesssim\sum_{m\geq0}\min\left\{2^{-|(m,n_2)|\frac{d-2-3\beta}{3}},2^{-|(m,n_2)|\frac{d-3-\beta}{3}}\right\}2^{-\frac{2n_1d}{3}}2^{\frac{2(i-k)}{3}}\|h^{i}_1\|_{L^1}^\frac{2}{3}\|f_2\|_{L^2}^\frac{2}{3}\\
		&\leq 2^{-\frac{2n_1d}{3}}2^{\frac{2(i-k)}{3}}\bigg(\sum_{m=0}^{n_2-1}\min\left\{2^{-n_2\frac{d-2-3\beta}{3}},2^{-n_2\frac{d-3-\beta}{3}}\right\}\|h^{i}_1\|_{L^1}^\frac{2}{3}\|f_2\|_{L^2}^\frac{2}{3}\\
		&\hspace{3cm}+\sum_{m\geq n_2}\min\left\{2^{-m\frac{d-2-3\beta}{3}},2^{-m\frac{d-3-\beta}{3}}\right\}\|h^{i}_1\|_{L^1}^\frac{2}{3}\|f_2\|_{L^2}^\frac{2}{3}\bigg)\\
		&\leq n_22^{-\frac{2n_1d}{3}}2^{\frac{2(i-k)}{3}}\min\left\{2^{-n_2\frac{d-2-3\beta}{3}},2^{-n_2\frac{d-3-\beta}{3}}\right\}\|h^{i}_1\|_{L^1}^\frac{2}{3}\|f_2\|_{L^2}^\frac{2}{3}\\
		&\leq n_2\min\left\{2^{-|\mathbf{n}|\frac{d-2-3\beta}{3}},2^{-|\mathbf{n}|\frac{d-3-\beta}{3}}\right\}2^{\frac{2(i-k)}{3}}\|h^{i}_1\|_{L^1}^\frac{2}{3}\|f_2\|_{L^2}^\frac{2}{3}.
	\end{align*}
	This completes the proof of \Cref{badfunction2} given that the estimate \eqref{LPbad} holds.
	
	\subsection*{Proof of estimate \eqref{LPbad}:} Due to the scaling argument in \eqref{scaling}, it is enough to prove \eqref{LPbad} for $\mathcal{M}^{(m,n_2)}_{0}$. Hence, when $m\geq1$, the desired estimate in \eqref{LPbad} follows from \Cref{l2}. Thus, it remains to prove \eqref{LPbad} for $m=0$.
	
	Let $b_1=h^{i}_1\ast\varphi_{2^{-k-n_1}}^i$ and denote $\mathcal{M}^{(0,n_2)}_{0}=\mathcal{M}^{n_2}_0$. Performing a decomposition similar to that in \eqref{localdecomposition}, we have
	\[\mathcal{M}^{n_2}_0(b_1,f_2)(x)\leq \mathbb{I}^{n_2}(x)+\mathbb{II}^{n_2}(x)+\mathbb{III}^{n_2}(x),\]
	where
	\[\mathbb{I}^{n_2}(x)=\sup_{t\in E_0}\left|\int_{\R^d\times\R^d}\frac{a_0 e^{it|(\xi,\eta)|}}{(t|(\xi,\eta)|)^{\frac{2d-1}{2}}}\widehat{\phi}\left( \xi\right)\widehat{\psi}\left(2^{-n_2} \eta\right)\widehat{b_1}(\xi)\widehat{f_2}(\eta)e^{ix\cdot(\xi+\eta)}~d\xi d\eta\right|,\]
	\[\mathbb{II}^{n_2}(x)=\sup_{t\in E_0}\left|\int_{\R^d\times\R^d}\frac{b_0 e^{-it|(\xi,\eta)|}}{(t|(\xi,\eta)|)^{\frac{2d-1}{2}}}\widehat{\phi}\left( \xi\right)\widehat{\psi}\left(2^{-n_2} \eta\right)\widehat{b_1}(\xi)\widehat{f_2}(\eta)e^{ix\cdot(\xi+\eta)}~d\xi d\eta\right|,\]
	and
	\[\mathbb{III}^{n_2}(x)=\sup_{t\in E_0}\left|\int_{\R^d\times\R^d}\mathfrak{m}_{t}^{n_2}(\xi,\eta)\widehat{b_1}(\xi)\widehat{f_2}(\eta)e^{ix\cdot(\xi+\eta)}~d\xi d\eta\right|.\]
	Here $\mathfrak{m}_{t}^{n_2}(\xi,\eta)=\Big[\widehat{d\sigma}(t\xi,t\eta)-\Big(\frac{a_0 e^{it|(\xi,\eta)|}}{(t|(\xi,\eta)|)^{\frac{2d-1}{2}}}+\frac{b_0 e^{-it|(\xi,\eta)|}}{(t|(\xi,\eta)|)^{\frac{2d-1}{2}}}\Big)\Big]\widehat{\phi}\left(\xi\right)\widehat{\psi}\left(2^{-n_2} \eta\right)$. Using the estimate \eqref{asymptotics} with $M=0$, and the support condition of $\widehat{\phi}$ and $\widehat{\psi}$, we have
	\[\big|\partial^{\alpha}_{\xi}\partial^{\beta}_{\eta}\mathfrak{m}_{t}^{n_2}(\xi,\eta)\big|\lesssim2^{-n_2\frac{2d-1}{2}}|\eta|^{-|\beta|}, \text{ for all }t\in E_0.\]
	This yields that 
	\begin{equation*}\label{error2}
		\mathbb{III}^{n_2}(x)\lesssim 2^{-n_2\frac{2d-1}{2}}M_{HL}f_1(x)M_{HL}f_2(x).
	\end{equation*}
	Therefore, $\mathbb{III}^{n_2}$ satisfies the desired bounds in \eqref{LPbad}. 
	
	Since, the arguments of proof to address the terms $\mathbb{I}^{n_2}$ and $\mathbb{II}^{n_2}$ are similar, we provide the proof of \eqref{LPbad} for the term $\mathbb{I}^{n_2}$.
	
	Let $\mathcal{E}_0$ be the disjoint collection of intervals of length $2^{-n_2}$ covering $E_0$. Decomposing $\mathbb{I}^{n_2}$ similar to that of $\mathbb{I}^{\mathbf{n}}$ in \Cref{Mn0}, we have
	\begin{align*}
		\mathbb{I}^{n_2}(x)&\lesssim\sum_{N=2^{n_2-1}}^{2^{n_2+1}}\sup_{I\in\mathcal{E}_0}\sup_{t\in I}\Big|T^{n_2}_{t,N}\Big(b_1,f_2\Big)(x) \Big|,
	\end{align*}
	where
	\[T^{n_2}_{t,N}\Big(b_1,f_2\Big)(x):=\int_{\R^d\times\R^d}\frac{e^{it|(\xi,\eta)|}}{(t|(\xi,\eta)|)^{\frac{2d-1}{2}}}\widehat{\phi}\left( \xi\right)\widehat{\psi}\left(2^{-n_2} \eta\right)\widehat{b_1}(\xi)\widehat{F_Nf_2}(\eta)e^{ix\cdot(\xi+\eta)}~d\xi d\eta,\]    
	and $F_N$ is defined as in \eqref{operatorF}.
	
	Then, from \eqref{FTC} and $\ell^1\hookrightarrow\ell^\infty$ embedding for $\mathcal{E}_0$, we get
	\begin{align*}
		\mathbb{I}^{n_2}(x)\lesssim&\sum_{N=2^{n_2-1}}^{2^{n_2+1}}\sum_{I\in\mathcal{E}_0}2^{\frac{n_2}{2}}\mathcal{G}^{n_2}_{I,N}(b_1,f_2)(x)+\sum_{N=2^{n_2-1}}^{2^{n_2+1}}\sum_{I\in\mathcal{E}_0}\big(\mathcal{G}^{n_2}_{I,N}(b_1,f_2)(x)\tilde{\mathcal{G}}^{n_2}_{I,N}(b_1,f_2)(x)\big)^{\frac{1}{2}},
	\end{align*}
	where
	\[\mathcal{G}^{n_2}_{I,N}(b_1,f_2)(x):=\left(\sum_{I\in \mathcal{E}_0}\int_I|T^{n_2}_{t,N}(b_1,f_2)(x)|^2\;ds\right)^\frac{1}{2}~\text{ and}\]
	\[\tilde{\mathcal{G}}^{n_2}_{I,N}(b_1,f_2)(x):=\left(\sum_{I\in \mathcal{E}_0}\int_I\left|\frac{\partial}{\partial s}T^{n_2}_{t,N}(b_1,f_2)(x)\right|^2\;ds\right)^\frac{1}{2}.\]
	
	Fix a positive integer $N$ such that $2^{n_2-1}\leq N\leq2^{n_2+1}$ and write
	\begin{align*}
		e^{it|\xi,\eta)|}=e^{it\tilde{\Phi}(\xi,\eta)}e^{it\sqrt{|\xi|^2+N^2}},
	\end{align*}
	where
	\[\tilde{\Phi}_N(\xi,\eta):=|(\xi,\eta)|-\sqrt{|\xi|^2+N^2}=\frac{(|\eta|-N)(|\eta|+N)}{\sqrt{|\xi|^2+|\eta|^2}+\sqrt{|\xi|^2+N^2}}.\]
	Note that due to support of $\widehat{\phi}$ and $\widehat{\psi}$, we have $|\xi|\leq2$ and $2^{n_2-1}\leq|\eta|\leq2^{n_2+1}, n_2\geq1$. Thus, $\tilde{\Phi}_N$ satisfies
	\begin{equation}\label{deri2}
		|\partial^\alpha_\xi\tilde{\Phi}_N(\xi,\eta)|\lesssim1,
	\end{equation}
	for all multi-indices $\alpha$. Then, we can write
	\begin{align*} T^{n_2}_{t,N}(b_1,f_2)(x)&=\int_{\R^d\times\R^d}m_t^{n_2}(\xi,\eta)\widehat{U_{t,N}b_1}(\xi)\widehat{F_Nf_2}(\eta)e^{ix\cdot(\xi+\eta)}\;d\xi d\eta,
	\end{align*}
	where
	\begin{equation*}\label{operatorU2}
		\widehat{U_{t,N}b_1}(\xi):=\widehat{b_1}(\xi) \widehat{\phi}\left(\xi\right) e^{it\left(\sqrt{|\xi|^2+N^{2}}\right)},
	\end{equation*}
	and
	\[m^{n_2}_{t}(\xi, \eta) :=\frac{e^{it\tilde{\Phi}_N(\xi,\eta)}}{|t(\xi,\eta)|}\widehat{\phi}\left(\xi\right) \widehat{\psi}\left(2^{-n_2} \eta\right).\]
	Using the estimate \eqref{deri2}, we get that
	\begin{equation}\label{derivative2}
		|\partial^\alpha_\xi \big(m^{n_2}_{t}(\xi, \eta)\big)|\lesssim C_{\alpha}2^{-\frac{2d-1}{2}n_2}, 
	\end{equation}    
	for all multi-indices $\alpha$ uniformly in $t\in E_0$. 
	
	Since $\widehat{U_{t,N}b_1}(\eta)=\int_{\R^d}U_{t,N}b_1(y)e^{-i\xi\cdot y}~dy$, we can rewrite
	\begin{align}\label{factor2}
		T^{n_2}_{t,N}(b_1,f_2)(x)&=\int_{\R^d\times\R^d}\widehat{F_Nf_2}(\eta)U_{t,N}b_1(y)\Big(\int_{\R^d}m^{n_2}_{t}(\xi, \eta)e^{i\xi\cdot(x-y)}d\xi\Big)e^{ix\cdot\eta}\;d\eta dy.
	\end{align}
	Similarly to \eqref{ip}, an integration by parts argument gives
	\begin{align}\label{ip2}
		&\int_{\R^d}m^{n_2}_{t}(\xi, \eta)e^{i\xi\cdot(x-y)}d\xi=\frac{1}{(1+|x-y|^2)^{\lceil\frac{d}{2}\rceil+1}}\int_{\R^d}(1-\Delta_\xi)^{\lceil\frac{d}{2}\rceil+1}\Big(m^{n_2}_{t}(\xi, \eta)\Big)e^{i\xi\cdot(x-y)}d\xi.
	\end{align}
	Substituting \eqref{ip2} into \eqref{factor2}, we get
	\begin{align*}
		T^{n_2}_{t,N}(b_1,f_2)(x)&=\int_{\mathbb{R}^{d}}A\left(U_{t,N}b_1\right)(x,\xi) B^{n_2}_{t, N}\left(F_{N}f_2\right)(x, \xi) d \xi,
	\end{align*}
	where
	\begin{align*}\label{operatorB2}
		B^{n_2}_{t, N}\left(F_{N}f_2\right)(x, \xi):=&\int_{\mathbb{R}^{d}} \left(1- \Delta_{\xi}\right)^{\lceil \frac{d}{2}\rceil+1}\left(m^{n_2}_{t}(\xi, \eta)\right)\widehat{F_{N}f_2}(\eta)e^{ix \cdot \eta} d\eta,
	\end{align*}
	and
	\[A\left(U_{t, N}b_1\right)(x,\eta):=\int_{\mathbb{R}^{d}} \frac{U_{t, N}b_1(y)e^{i\xi\cdot(x-y)}}{\left(1+\left|x-y\right|^{2}\right)^{\lceil \frac{d}{2}\rceil+1}} d y.\]
	Note that
	\begin{align*}
		|A\left(U_{t, N}b_1\right)(x,\eta)|&\leq\int_{\mathbb{R}^{d}} \frac{|U_{t, N}b_1(y)|}{\left(1+\left|x-y\right|^{2}\right)^{\lceil \frac{d}{2}\rceil+1}} d y\\
		&:=L\left(U_{t,N}b_1\right)(x).
	\end{align*}
	Then, we have
	\begin{equation*}\label{decomposition2}
		|T^{n_2}_{t,N}(b_1,f_2)(x)|\leq L\left(U_{t,N}b_1\right)(x) \int_{\mathbb{R}^{d}}\left|B^{n_2}_{t, N}\left(F_{N}f_2\right)(x, \xi)\right| d\xi,
	\end{equation*}
	and
	\begin{align*}
		\mathcal{G}^{n_2}_{I,N}(b_1,f_2)(x)\lesssim\left(\int_{\mathbb{R}^{d}} \sup_{s\in I}\left|B^{n_2}_{s, N}\left(f_2\right)(x, \eta)\right| d \eta\right)\bigg(\int_I\left|L\left(U_{s,N}^{n_2}b_1\right)(x)\right|^2\;ds\bigg)^\frac{1}{2}.
	\end{align*}
	Recall that $\supp(\widehat{\phi})\subset B(0,2)$ and $|I|=2^{-n_2}$. Similarly to the $L^p-$estimates in \Cref{bn} along with \eqref{derivative2}, we can get that
	\[\int_{\mathbb{R}^{d}}\left\| \sup_{s\in I}\left|B^{n_2}_{s, N}\left(F_{N}f_2\right)(\cdot, \xi)\right|\right\|_{L^2} d \xi\lesssim2^{-n_2d}\|f_2\|_{L^2}.\]
	Using Minkowski's integral inequality twice, we have
	\begin{align*}
		\bigg(\int_I\left|L\left(U_{s,N}b_1\right)(x)\right|^2\;ds\bigg)^\frac{1}{2}&\leq\int_{\R^d}\bigg(\int_I\left|U_{s,N}b_1(x-y)\right|^2\;ds\bigg)^\frac{1}{2}\frac{1}{\left(1+\left|y\right|^{2}\right)^{\lceil \frac{d}{2}\rceil+1}}dy\\
		&\leq\int_{\R^d}\int_{\R^d}b_1(x-y-z)\Big(\int_I\left|K_{s,N}(z)\right|^2\;ds\Big)^\frac{1}{2}\frac{1}{\left(1+\left|y\right|^{2}\right)^{\lceil \frac{d}{2}\rceil+1}}~dzdy,
	\end{align*}
	where $K_{s,N}(z)=\int_{\R^d}\widehat{\phi}\left(\xi\right) e^{2 \pi is\left(\sqrt{|\xi|^2+N^{2}}\right)}e^{2\pi iz\cdot\xi}d\xi$. Since $\phi\in\mathcal{S}(\R^d)$ and $s\in I\subset[1,2]$, we have good kernel estimates for $K_{s,N}$ which are uniform in $s$. Thus, we have
	\begin{align*}
		\left\|\bigg(\int_I\left|L\left(U_{s,N}b_1\right)\right|^2\;ds\bigg)^\frac{1}{2}\right\|_{L^1}\lesssim2^{-\frac{n_2}{2}}\|b_1\|_{L^1}.
	\end{align*}
	Using H\"older's inequality and the estimates above, we get that
	\begin{align*}
		\Bigg\|\sum_{N=2^{n_2-1}}^{2^{n_2+1}}\sum_{I\in \mathcal{E}_0}\mathcal{G}^{n_2}_{I,N}(b_1,f_2)\Bigg\|_{L^{2/3}}&\lesssim2^{-n_2\frac{2d+1}{2}}\bigg(\sum_{N=2^{n_2-1}}^{2^{n_2+1}}\sum_{I\in \mathcal{E}_0}\|b_1\|_{L^1}^{\frac{2}{3}}\|F_Nf_2\|_{L^2}^{\frac{2}{3}}\bigg)^{\frac{3}{2}}\\
		&\lesssim2^{-n_2\frac{2d-1-3\beta}{2}}\|b_1\|_{L^1}\|f_2\|_{L^2}.
	\end{align*}
	Similarly, we can get
	\begin{align*}
		\Bigg\|\sum_{N=2^{n_2-1}}^{2^{n_2+1}}\sum_{I\in \mathcal{E}_0}\tilde{\mathcal{G}}^{n_2}_{I,N}(b_1,f_2)\Bigg\|_{L^{2/3}}&\lesssim2^{-n_2\frac{2d-3-3\beta}{2}}\|b_1\|_{L^1}\|f_2\|_{L^2}.
	\end{align*}
	Consequently, we have
	\[\big\|\mathbb{I}^{n_2}\big\|_{L^{2/3}}\lesssim2^{-n_2\frac{2d-2-3\beta}{2}}\|b_1\|_{L^1}\|f_2\|_{L^2}.\]
	Similarly, we can obtain the above estimate for $\mathbb{I}^{n_2}$. Now, combining the estimates for $\mathbb{I}^{n_2}, \mathbb{II}^{n_2}$ and $\mathbb{III}^{n_2}$, we get that
	\[\big\|\mathcal{M}^{(0,n_2)}_0(b_1,f_2)\big\|_{L^{2/3}}\lesssim2^{-n_2\frac{2d-2-3\beta}{2}}\|b_1\|_{L^1}\|f_2\|_{L^2}.\]
	Note that $\max\left\{\frac{d-2-3\beta}{2},\frac{d-3-\beta}{2}\right\}\leq\frac{2d-2-3\beta}{2}$ for all $d\geq3$ and $0\leq\beta\leq1$. Therefore, the above estimate proves the desired bounds in \eqref{LPbad} for $\mathcal{M}^{(0,n_2)}_0$. 	\qed
	
	\subsection{Proof of \Cref{L1}.}
	Let us assume without loss of generality that $\|f_1\|_{L^1}=\|f_2\|_{L^1}=1$. Hence, for given  $\alpha>0$, we are required to prove that 
	\begin{equation}\label{weak}
		\Big|\{x:\mathcal{M}^{\mathbf{n}}(f_1,f_2)(x)>\alpha\}\Big|\leq C|\mathbf{n}|^{2}2^{|\mathbf{n}|\frac{\beta}{2}}\alpha^{-\frac{1}{2}}.
	\end{equation}
	We apply the Calder\'{o}n-Zygmund decomposition to both functions $f_1$ and $f_2$ with the same choice of height $\alpha^{\frac{1}{2}}$. This gives us the decomposition $f_j=g_j+h_j$ with collections $\{Q_{\gamma_j}\}$ of disjoint cubes for $ j=1,2$ such that 
	\[\Vert g_j\Vert_{L^{\infty}}\leq \alpha^{\frac{1}{2}}, ~~h_j=\sum_{\gamma_j}h_{\gamma_j},~~supp(h_{\gamma_j})\subset Q_{\gamma_j},\]
	\begin{align*}
		&\Vert h_{\gamma_j}\Vert_{L^{1}}\leq C\alpha^{\frac{2}{3}}|Q_{\gamma_j}| ~~ \text{with}~~ \int_{Q_{\gamma_j}}h_{\gamma_j}=0, ~~\text{and} ~~ \sum_{\gamma_j}\big|Q_{\gamma_j}\big|\leq \alpha^{-\frac{1}{2}}.
	\end{align*}
	We have
	\begin{align*}
		\Big|\{x:\mathcal{M}^{\mathbf{n}}(f_1,f_2)(x)>\alpha\}\Big|\leq&\Big|\{x:\mathcal{M}^{\mathbf{n}}(h_1,h_2)(x)>\frac{\alpha}{4}\}\Big|+\Big|\{x:\mathcal{M}^{\mathbf{n}}(h_1,g_2)(x)>\frac{\alpha}{4}\}\Big|\\
		&+\Big|\{x:\mathcal{M}^{\mathbf{n}}(g_1,h_2)(x)>\frac{\alpha}{4}\}\Big|+\Big|\{x:\mathcal{M}^{\mathbf{n}}(g_1,g_2)(x)>\frac{\alpha}{4}\}\Big|.
	\end{align*}
	We need to prove the estimate in \eqref{weak} for each of the four terms in the equation above.
	\subsubsection*{\bf Contribution from $\mathcal{M}^{\mathbf{n}}(g_1,g_2)$: }
	From Chebyshev's inequality and \Cref{L2}, we get that
	\begin{align*}
		|\{x\in \mathbb{R}^{d}:\mathcal{M}^{\mathbf{n}}(g_1,g_2)(x)>\frac{\alpha}{4}\}|&\lesssim \frac{2^{-|\mathbf{n}|\frac{2d-1-2\beta}{2}}\|g_1\|_{L^2}\|g_2\|_{L^2}}{\alpha}\\
		&\lesssim \alpha^{-\frac{1}{2}},
	\end{align*}
	where we used $\|g_1\|_{L^2}\lesssim\alpha^{\frac{1}{4}}$ and $\|g_2\|_{L^2}\lesssim\alpha^{\frac{1}{4}}$ in the last step. Thus, we obtain \eqref{weak} for $\mathcal{M}^{\mathbf{n}}(g_1,g_2)$.
	
	\subsubsection*{\bf Contributions from $\mathcal{M}^{\mathbf{n}}(h_1,g_2)$ and $\mathcal{M}^{\mathbf{n}}(g_1,h_2)$: }
	Due to symmetry it is enough to provide the arguments for the term $\mathcal{M}^{\mathbf{n}}(h_1,g_2)$. Note that by part $(1)$ of \Cref{L1}, we get 
	\begin{align*}
		|\{x\in \mathbb{R}^{d}:\mathcal{M}^{\mathbf{n}}(h_1,g_2)(x)>\frac{\alpha}{4}\}|&\lesssim \frac{|\mathbf{n}|^2\|h_1\|_{L^1}^{\frac{2}{3}}\|g_2\|_{L^2}^{\frac{2}{3}}}{\alpha^{\frac{2}{3}}}\\
		&\lesssim \frac{|\mathbf{n}|^2\alpha^{\frac{1}{6}}}{\alpha^{\frac{2}{3}}}\\
		&= |\mathbf{n}|^2\alpha^{-\frac{1}{2}},
	\end{align*}
	where we have used the fact $\|g_2\|_{L^2}\lesssim\alpha^{\frac{1}{4}}$ in the second inequality. This proves the desired estimates for $\mathcal{M}^{\mathbf{n}}(h_1,g_2)$.
	\subsubsection*{\bf Contribution from $\mathcal{M}^{\mathbf{n}}(h_1,h_2)$:}
	We use the notation as in the previous section. We define the exceptional set 
	\[\mathcal{F}:=\Big(\cup_{\gamma_1}\tilde{Q}_{\gamma_1}\Big)\cup \Big(\cup_{\gamma_2}\tilde{Q}_{\gamma_2}\Big).\]
	Then, $|\mathcal{F}|\lesssim\alpha^{-\frac{1}{2}}$. Thus, using Chebyshev's inequality, it is enough to show that 
	\[\int_{\mathbb{R}^d\setminus \mathcal{F}}|\mathcal{M}^{\mathbf{n}}(h_1,h_2)(x)|^{\frac{1}{2}}~dx\lesssim |\mathbf{n}|^{2}2^{|\mathbf{n}|\frac{\beta}{2}}.\]
	We organize the bad functions  
	\[h_{j}=\sum_{i_j\in\mathbb{Z}}h^{i_j}_{j},~~\text{where}~~h^{i_j}_{j}=\sum_{l(Q_{\gamma_j})=2^{-i}}h_{\gamma_j}.\]
	\begin{lemma}\label{badfunction}
		Let $k,i_1,i_2\in\mathbb{Z}$ and $\mathbf{n}\in\mathbb{N}^2$. Then the following estimate holds uniformly in $k,i_j,n_j$.
		\begin{align*}
			\int\limits_{\mathbb{R}^{d}\setminus \mathcal{F}}\Big|\mathcal{M}^{\mathbf{n}}_k(h^{i_1}_1,h^{i_2}_2)(x)\Big|^{\frac{1}{2}}dx\lesssim \min_{j=1,2}\{1,2^{\frac{i_j-k}{2}},2^{\frac{k+n_j-i_j}{2}}\}2^{|\mathbf{n}|\frac{\beta}{2}}\prod^{2}_{j=1}\Vert h^{i_j}_{j}\Vert^{\frac{1}{2}}_{L^{1}}.
		\end{align*} 
	\end{lemma}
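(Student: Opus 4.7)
The proof follows the three-case strategy of \Cref{badfunction2}, now applied symmetrically to the two bad functions $h_1^{i_1}$ and $h_2^{i_2}$ and with target exponent $L^{1/2}$ in place of $L^{2/3}$. The $\min$ over $j$ in the statement means it suffices to establish, for each fixed $j\in\{1,2\}$, the three candidate bounds (factors $1$, $2^{(i_j-k)/2}$, and $2^{(k+n_j-i_j)/2}$) by improving only the $j$-th input and controlling the other one by its plain $L^1$ norm; the final minimum is then obtained by selecting the better of the two.

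The factor $1$ is just \Cref{l1} applied at scale $k$ (uniform in $k$ by dyadic rescaling, as noted before \Cref{L2}). For the factor $2^{(k+n_j-i_j)/2}$, active when $i_j\geq k+n_j$, I would repeat verbatim the cancellation step of \Cref{badfunction2}: the vanishing moment $\int h_{\gamma_j}=0$ together with a first-order Taylor expansion of $\boldsymbol{\psi}_{2^{-(k+n_j)}}$ about the centre of $Q_{\gamma_j}$ gives $\|\mathbf{R}_{k+n_j} h_j^{i_j}\|_{L^1} \lesssim 2^{k+n_j-i_j}\|h_j^{i_j}\|_{L^1}$, which is then fed into \Cref{l1}. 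The input to $\mathcal{M}^{\mathbf{n}}_k$ in the $j$-th slot may be replaced by $\mathbf{R}_{k+n_j}$ applied to the bad function without loss, since $\widehat{\boldsymbol{\psi}}\equiv 1$ on $\mathrm{supp}(\widehat{\psi})$.

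For the factor $2^{(i_j-k)/2}$, active when $i_j<k$, I would mirror the exceptional-set argument of \Cref{badfunction2}: for $x\notin \mathcal{F}$, $|y|\leq 1$, $t\in E_k$, and $y'\in\mathrm{supp}(h_{\gamma_j})$, the distance $|x-ty-y'|$ is at least $2^{2-i_j}$ since $2^{-k}<2^{-i_j}$. This lets us replace the kernel $\psi_{2^{-(k+n_j)}}$ by its truncation $\varphi^{i_j}_{2^{-(k+n_j)}}$ to $\{|w|\geq 2^{-i_j}\}$, whose $L^1$ norm is controlled by $2^{-(N-d)n_j}\,2^{i_j-k}$ for any large $N$ by Schwartz decay. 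A secondary Littlewood-Paley decomposition of this truncated kernel, together with summation over the induced dyadic frequency scales $m\geq 0$ exactly as in the derivation of \eqref{LPbad}, produces the required factor.

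The main obstacle is this secondary Littlewood-Paley step in the exceptional-set case: one must expand $\varphi^{i_j}_{2^{-(k+n_j)}}$ in frequency blocks, apply \Cref{l1} together with its $P_k$-projection variant (for the low-frequency block) to each piece, and then sum the resulting series. In the $L^{1/2}$ setting one relies on the $\ell^{1/2}\hookrightarrow \ell^\infty$ embedding in order to reorganise the supremum over $k$ and the sums over frequency indices, so some care is required to verify that the geometric decay in the new index dominates any combinatorial losses, analogously to the handling of the $m$-sum in \Cref{badfunction2}. Once these three bounds are in place for both $j=1,2$, the statement of \Cref{badfunction} follows by taking the minimum.
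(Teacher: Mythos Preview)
Your treatment of the first two regimes (the factor $1$ via \Cref{l1} and the factor $2^{(k+n_j-i_j)/2}$ via the cancellation of $h_{\gamma_j}$) matches the paper's proof. The divergence is in the regime $i_j<k$.

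You propose to mimic \Cref{badfunction2}: truncate $\psi_{2^{-(k+n_j)}}$ to $\varphi^{i_j}$, perform a secondary Littlewood--Paley decomposition in the new index $m$, apply \Cref{l1} piecewise, and sum. The problem is that the mechanism which makes the $m$-sum converge in \Cref{badfunction2} is the \emph{decay} $2^{-|(m,n_2)|(d-3-\beta)/3}$ coming from \Cref{l2}. Here the black box is \Cref{l1}, which produces the \emph{growth} factor $2^{|(m,n_2)|\beta}$. For $\beta>0$ the series $\sum_{m\geq n_2}2^{m\beta/2}$ diverges, and the compensating input bound $\|h^{i_j}_j*\varphi^{i_j}\|_{L^1}\lesssim 2^{-(N-d)n_j}2^{i_j-k}\|h^{i_j}_j\|_{L^1}$ carries no decay in $m$. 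To rescue this you would need $m$-decay of $\|R_{k+m}(h^{i_j}_j*\varphi^{i_j})\|_{L^1}$, but $\varphi^{i_j}$ has a hard spatial cutoff and is not frequency-localised, so this is not automatic; you have not supplied that argument, and it is precisely the step you flag as the ``main obstacle''.

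The paper avoids this difficulty altogether. In the $i_j<k$ regime it does \emph{not} invoke \Cref{l1} as a black box; instead it reruns the physical-space proof of \Cref{l1} with the truncated majorant $\Phi_{2^{-k-n_j}}=\Psi_{2^{-k-n_j}}\chi_{\{|\cdot|\geq 2^{-i_j}\}}$ in place of $\Psi_{2^{-k-n_j}}$. The cube decomposition of $\R^d$ at scale $2^{-k}$, the discretisation of $E_k$ into intervals of length $2^{-k-|\mathbf{n}|}$, the pointwise comparison $\Psi_{2^{-k-n_j}}(w_1)\lesssim\Psi_{2^{-k-n_j}}(w_2)$ for $|w_1-w_2|\leq 2^{-k-|\mathbf{n}|}$, and the $L^1\times L^1\to L^1$ bound for a single average $\mathcal A_t$ all go through verbatim with $\Phi$ replacing $\Psi$; the only change is that at the end one picks up the factor $\|\Phi_{2^{-k-n_j}}\|_{L^1}^{1/2}\lesssim 2^{(N-d)(i_j-k-n_j)/2}\leq 2^{(i_j-k)/2}$ (choosing $N=d+1$). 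No frequency resummation is needed, and the factor $2^{|\mathbf{n}|\beta/2}$ appears exactly once, from the covering number of $E_k$.
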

	We use similar arguments as in the previous section along with \Cref{badfunction} and Lemma $5.2$ from \cite{ChristZhou} to conclude that 
	\[\sum_{k\in\mathbb{Z}}\sum_{i_1,i_2\in\mathbb{Z}}\min_{j=1,2}\min\Big\{2^{\frac{i_j-k}{2}},2^{\frac{k+n_j-i_j}{2}},1\Big\}\prod^{2}_{j=1}\Vert h^{i_j}_{j}\Vert^{\frac{1}{2}}_{L^{1}}\lesssim |\mathbf{n}|^{2}2^{|\mathbf{n}|\frac{\beta}{2}}.\]
	This completes the proof for the parts where we have bad functions in both places.  
	\subsection*{Proof of \Cref{badfunction}.}
	The proof is similar to that of \Cref{badfunction2} with some modifications. We will give a brief sketch only. Note that from \Cref{l1}, we have
	\begin{align*}
		\int_{\mathbb{R}^{d}\setminus \mathcal{F}}\Big|\mathcal{M}^{\mathbf{n}}_k(h^{i_1}_1,h^{i_2}_2)(x)\Big|^{\frac{1}{2}}dx&\lesssim & 2^{|\mathbf{n}|\frac{\beta}{2}}\Vert h^{i_1}_{1}\Vert^{\frac{1}{2}}_{L^{1}}\Vert h^{i_2}_{2}\Vert^{\frac{1}{2}}_{L^{1}}.
	\end{align*}
	The estimate above gives us the desired bound when $k\leq i_j<k+n_j$. \\
	When $i_j>k+n_j$, we write $h^{i_j}_j=\sum\limits_{l(Q_{\gamma_j})=2^{-i_j}}h_{\gamma_j}$. Using the mean zero property of the functions $h_{\gamma_j}$, we get
	\begin{align}\label{min}
		\Vert \mathbf{R}_{k+n_j}h^{i_j}_{j}\Vert_{L^{1}}\lesssim 2^{k+n_j-i_j}\Vert h^{i_j}_{j}\Vert_{L^{1}}.
	\end{align}
	When $i_j<k$, we follow the idea as used in the proof of \Cref{l1} with some modifications.	Write $\R^d=\cup_{Q\in\mathfrak C}Q$, where $\mathfrak{C}$ is a family of disjoint cubes with sidelength $2^{-k}$ and having sides parallel to the coordinate axes. Then
	\begin{align*}
		&\|\mathcal{M}^{\mathbf{n}}_k(h^{i_1}_1,h^{i_2}_2)\|_{L^{1/2}(\mathbb{R}^{d}\setminus\mathcal{F})}^\frac{1}{2}\\	
		&=\int_{\mathbb{R}^{d}\setminus\mathcal{F}}\bigg(\sup\limits_{t\in E_k}\bigg|\int_{\mathbb{S}^{2d-1}}\bigg(\sum\limits_{Q\in\mathfrak{C}}(h^{i}_1*\psi_{2^{-k-n_1}})\mathlarger{\chi}_{_Q}\bigg)(x-ty)( f_2*\psi_{2^{-k-n_2}})(x-tz)\;d\sigma(y,z)\bigg|\bigg)^\frac{1}{2}dx\\
		&\leq\sum_{Q\in\mathfrak{C}}\int_{3Q\setminus\mathcal{F}}\bigg(\sup\limits_{t\in E_k}\int_{\mathbb{S}^{2d-1}}\big((|h^{i_1}_1|*\Psi_{2^{-k-n_1}})\mathlarger{\chi}_{_Q}\big)(x-ty)\big(( |h^{i_2}_2|*\Psi_{2^{-k-n_2}})\mathlarger{\chi}_{_{5Q}}\big)(x-tz)\;d\sigma(y,z)\bigg)^\frac{1}{2}dx.
	\end{align*}
	where $\Psi(w)=(2+|w|)^{-N}$ so that $|\psi(w)|\lesssim\Psi(w)$. 
	Recall that $h^{i_1}_{1}=\sum\limits_{l(Q_{\gamma_1})=2^{-i_1}}h_{\gamma_1}$. Then, for $y'\in \supp(h_{\gamma_1})$ and $x\notin \mathcal{F}$, we have  
	\[dist(x-ty, y')\geq 2^{2-i_1}\quad \text{for all }t\in E_k,~ |y|\leq1,\]
	as $2^{-k}<2^{-i_1}$. 
	
	We define $\Phi_{2^{-k-n_1}}(w)=\Psi_{2^{-k-n_1}}(w)\mathlarger{\chi}_{\{|w|\geq 2^{-i_1}\}}(w)$. Therefore, when $x\notin \mathcal{F}$, we have
	\[|h^{i_1}_1|*\Psi_{2^{-k-n_1}}(x-ty)\lesssim|h^{i_1}_1|*\Phi_{2^{-k-n_1}}(x-ty)\quad \text{for all }t\in E_k,~ |y|\leq1.\]
	Recall that for any $N>d$, we have
	\begin{align*}
		\Vert \Phi_{2^{-n_1-k}}\Vert_{L^1}&\lesssim 2^{(N-d)(i_1-n_1-k)}.
	\end{align*}
	Let $\mathcal{E}_k$ be the collection of disjoint intervals of length $2^{-k-|\mathbf{n}|}$ covering $E_k$. Recall that \[\Psi_{2^{-k-n_1}}(w_1)\lesssim\Psi_{2^{-k-n_1}}(w_2),~\text{ whenever}~ |w_1-w_2|\leq2^{-k-|\mathbf{n}|}.\]
	Now, using the Cauchy-Schwarz inequality, we have
	\begin{align*}
		&\|\mathcal{M}^{\mathbf{n}}_k(h^{i_1}_1,h^{i_2}_2)\|_{L^{1/2}(\mathbb{R}^{d}\setminus\mathcal{F})}^\frac{1}{2}\\
		&\lesssim\sum_{Q\in\mathfrak{C}}2^{-\frac{kd}{2}}\bigg(\int_{3Q\setminus\mathcal{F}}\sum_{I\in\mathcal{E}_k}\sup_{t\in I}\int_{\s^{2d-1}}\left((|h^{i}_1|*\Psi_{2^{-k-n_1}})\mathlarger{\chi}_{_{Q}}\right)(x-ty)\\
		&\hspace{4.4cm}\times\left(( |f_2|*\Psi_{2^{-k-n_2}})\mathlarger{\chi}_{_{5Q}}\right)(x-tz)\;d\sigma(y,z)\;dx\bigg)^\frac{1}{2}\\
		&\lesssim\sum_{Q\in\mathfrak{C}}2^{-\frac{kd}{2}}\bigg(\int_{3Q}\sum_{I\in\mathcal{E}_k}\int_{\s^{2d-1}}\left((|h^{i_1}_1|*\Phi_{2^{-k-n_1}})\mathlarger{\chi}_{_{2Q}}\right)(x-t_0y)\\
		&\hspace{4.4cm}\times\left(( |h^{i_2}_2|*\Psi_{2^{-k-n_2}})\mathlarger{\chi}_{_{6Q}}\right)(x-t_0z)\;d\sigma(y,z)\;dx\bigg)^\frac{1}{2}\\
		&\lesssim2^{-\frac{kd}{2}}\sum_{Q\in\mathfrak{C}}\bigg(\sum_{I\in\mathcal{E}_k}t_0^{-d}\big\|(|h^{i_1}_1|*\Phi_{2^{-k-n_1}})\mathlarger{\chi}_{_{2Q}}\big\|_{L^1}\;\big\|( |h^{i_2}_2|*\Psi_{2^{-k-n_2}})\mathlarger{\chi}_{_{6Q}}\big\|_{L^1}\bigg)^\frac{1}{2}\\
		&\leq2^{-\frac{kd}{2}}2^{|\mathbf{n}|\frac{\beta}{2}}2^{\frac{kd}{2}}\bigg(\sum_{Q\in\mathfrak{C}}\|(|h^{i_1}_1|*\Phi_{2^{-n_1}})\mathlarger{\chi}_{_{2Q}}\|_{L^1}\bigg)^\frac{1}{2}\bigg(\sum_{Q\in\mathfrak{C}}\|(|h^{i_2}_2|*\Psi_{2^{-n_2}})\mathlarger{\chi}_{_{6Q}}\|_{L^1}\bigg)^\frac{1}{2}\\
		&\lesssim2^{|\mathbf{n}|\frac{\beta}{2}}2^{(N-d)\frac{i_1-k-n_1}{2}}\|h^{i_1}_{1}\|_{L^1}^\frac{1}{2}\|h^{i_2}_2\|_{L^1}^\frac{1}{2},
	\end{align*}
	where we have used $L^1\times L^1\to L^1-$boundedness of single scale bilinear spherical average in the third inequality and H\"older's inequality in the fourth inequality in the expression above. \\
	Since $i_1<k$, we have $2^{(N-d)\frac{i_1-k-n_1}{2}}\leq 2^{\frac{i_1-k}{2}}$ for every $N\geq d+1$. We choose $N=d+1$ to obtain
	\[\Vert \mathcal{M}^{\mathbf{n}}_k(h^{i_1}_1,h^{i_2}_2)\Vert_{L^{1/2}(\mathbb{R}^{d}\setminus\mathcal{F})}^\frac{1}{2}\lesssim 2^{\frac{i_1-k}{2}}2^{|\mathbf{n}|\frac{\beta}{2}}\|h^{i_1}_1\|_{L^1}^\frac{1}{2}\|h^{i_2}_2\|_{L^1}^\frac{1}{2}.\]
	Interchanging the roles of the $i_1$ and $i_2$ proves the desired bound for $i_2$. This completes the proof of \Cref{badfunction}.  Consequently, the proof of \Cref{L1} is completed.
	\qed
	
	\section{Proof of \Cref{main2}}\label{d=1}
	First, observe that for any set $E\subseteq \R_+$, the maximal function $\mathcal{M}_E(f_1,f_2)$ is dominated in pointwise sense by the full maximal function $\mathcal{M}_{full}(f_1,f_2)$. Therefore, invoking the $L^p-$boundedness  results of the operator $\mathcal{M}_{full}$, see \cite[Theorem 1.3]{ChristZhou} and \cite[Theorem 1]{DosidisRamos}, we get that the maximal operator $\mathcal{M}_E$ is bounded from $L^{p_1}\times L^{p_2}\to L^{p}$ for all $p_1, p_2>2$. 
	
	Since, 
	\begin{align*}
		\mathcal{M}_{E}(f_1,f_2)(x)\leq \Vert f_2\Vert_{L^\infty} \sup_{t\in E}\left|\int_{\mathbb{S}^{1}}f_1(x+ty)~d\sigma(y,z)\right|.
	\end{align*}
	Observe that in order to study $L^p$ estimates of the maximal operator 
	$$f_1\rightarrow \sup_{t\in E}\left|\int_{\mathbb{S}^{1}}f_1(x+ty)~d\sigma(y,z)\right|,$$ 
	it is enough to study the maximal operator
	\[\mathcal{N}f_1(x)=\sup_{t\in E}\int^{1}_{0}|f_1(x+tr)|~\frac{dr}{\sqrt{1-r^2}}.\]
	By H\"older's inequality $\frac{1}{q}+\frac{1}{q'}=1,~q\geq1$, we have
	\begin{align*}
		\mathcal{N}f_1(x)
		&\leq \sup_{t\in E}\sum_{j=1}^\infty 2^{j\left(\frac{1}{2}-\frac{1}{q'}\right)}\left(\int_{1-2^{-(j-1)}}^{1-2^{-j}}|f_1(x+tr)|^q~dr\right)^{\frac{1}{q}}\\
		&\leq\sum_{j=1}^\infty 2^{-\frac{j}{2}}\sup_{k\in \mathbb{Z}}\sup_{t\in J_k}\left(\frac{1}{|I_j|}\int_{I_j}|f_1(x+2^ktr)|^q~dr\right)^\frac{1}{q},
	\end{align*}
	where $I_j=[1-2^{-(j-1)},1-2^{-j}]$ and $2^kJ_k=E\cap[2^k,2^{k+1}]$. Note that $J_k\subseteq [1,2].$ For each $k$, let $\mathcal{E}_{0,k}^{j}$ be the collection of disjoint intervals
	of length $2^{-j-1}$ that cover $J_k$. Then, \Cref{def} tells us that $\sup_{k\in\Z}\#\mathcal{E}_{0,k}^{j}\leq 2^{j\beta}$. By a change of variable $tr\to r$, we have
	\begin{align*}
		\sup_{t\in J_k}\left(\frac{1}{|I_j|}\int_{I_j}|f_1(x+2^ktr)|^q~dr\right)^\frac{1}{q}&=\sup_{J\in \mathcal{E}_{0,k}^{j}}\sup_{t\in J}\left(\frac{1}{t|I_j|}\int_{tI_j}|f_1(x+2^kr)|^q~dr\right)^\frac{1}{q}\\
		&\lesssim\left(\sum_{J\in \mathcal{E}_{0,k}^{j}}\frac{1}{|\tilde{I_j}|}\int_{\tilde{I_j}}|f_1(x+2^kr)|^q~dr\right)^\frac{1}{q},
	\end{align*}
	where $\tilde{I_j}=\{ab:a\in J, b\in I_j\}$ with $|\tilde{I_j}|\approx |I_j|$, and we used $\ell_q\hookrightarrow\ell_{\infty}$ for the set $\mathcal{E}_{0,k}^{j}$ in the last inequality. Therefore,
	\begin{align*}
		\mathcal{N}f_1(x)\lesssim\sum_{j=1}^\infty 2^{-j\left(\frac{1}{2}-\frac{\beta}{q}\right)}\sup_{k\in \mathbb{Z}}\left(\frac{1}{|\tilde{I_j}|}\int_{\tilde{I_j}}|f_1(x+2^kr)|^q~dr\right)^\frac{1}{q}.
	\end{align*}
	Thus, for $p>q$, we have
	\begin{align*}
		\left\|\mathcal{N}f_1\right\|_{L^p}
		&\lesssim\sum_{j=1}^{\infty}2^{-j\left(\frac{1}{2}-\frac{\beta}{q}\right)}\left\|\sup_{k\in \mathbb{Z}}\left(\frac{1}{|\tilde{I_j}|}\int_{\tilde{I_j}}|f_1(x+2^kr)|^q~dr\right)^\frac{1}{q}\right\|_{L^p}\\
		&\lesssim\sum_{j=1}^{\infty}2^{-j\left(\frac{1}{2}-\frac{\beta}{q}\right)}\log(2+c2^{j})\|f_1\|_{L^p},
	\end{align*}
	where we have used the $L^p$-estimates of shifted Hardy-Littlewood maximal function, see \cite[Theorem 4.1]{Muscalu} and \cite{book-Stein} for more details. The sum in the above inequality is finite if $q>2\beta$. This implies that $\mathcal{M}_E$ is bounded from $L^{p}(\R)\times L^{\infty}(\R)$ to $L^{p}(\R)$ for $p>\max\{2\beta,1\}$. Interchanging the role of $f_1$ and $f_2$, we can obtain $L^{\infty}\times L^{p}\to L^{p}$-boundedness of $\mathcal{M}_E$ for $p>\max\{2\beta,1\}$.
	
	Interpolating the above estimates with trivial estimates from $\mathcal{M}_{full}$, we get the desired boundedness in \Cref{main2} when $\beta\geq\frac{1}{2}$. Moreover, if $\beta<\frac{1}{2}$, $\mathcal{M}_E$ maps $L^{p_1}(\R)\times L^{p_2}(\R)$ to $L^{p}(\R)$, for $1<p_1,p_2<\infty$. 
	
	Next, following Littlewood-Paley decomposition from \eqref{deco}, we have
	\begin{align}\label{deco2}
		\mathcal{M}_{E}(f_1,f_2)(x)\lesssim & \sup_{k\in\Z}\sup_{t\in E_k}\Big|\mathcal{A}_{t}(P_kf_1,P_kf_2)(x)\Big|+\sup_{k\in\Z}\sup_{t\in E_k}\Big|\mathcal{A}_{t}(P_kf_1,f_2)(x)\Big|\\
		&+\sup_{k\in\Z}\sup_{t\in E_k}\Big|\mathcal{A}_{t}(f_1,P_kf_2)(x)\Big|+\sum^{\infty}_{n_1,n_2=1}\mathcal{M}^{\mathbf{n}}(f_1,f_2)(x),\nonumber
	\end{align}
	where
	\begin{align*}
		\mathcal{M}^{\mathbf{n}}(f_1,f_2)(x)=\sup_{k\in \Z}\sup_{t\in E_k}\Big|\mathcal{A}_{t}(R_{k+n_1}f_1,R_{k+n_2}f_2)(x)\Big|.
	\end{align*}
	Recall from the proof of \Cref{trivialestimates} that
	\[\sup_{t\in E_k}\sup_{|z|\leq1}\big|P_kf_2(x-tz)\big|\lesssim M_{HL}f_2(x).\]
	Then, we have
	\begin{align*}
		\left\|\sup_{k\in\Z}\sup_{t\in E_k}\Big|\mathcal{A}_{t}(f_1,P_kf_2)\Big|\right\|_{L^p}&\lesssim \big\|M_{HL}f_2\big\|_{L^{p_2}}\left\|\sup_{t\in E}\left|\int_{\mathbb{S}^{1}}f_1(\cdot+ty)~d\sigma(y,z)\right|\right\|_{L^{p_1}}.
	\end{align*}
	From the boundedness of the Hardy-Littlewood maximal function and the boundedness of the operator $\sup_{t\in E}\left|\int_{\mathbb{S}^{1}}f_1(x+ty)~d\sigma(y,z)\right|$ discussed before, the maximal operator $\sup_{k\in\Z}\sup_{t\in E_k}\big|\mathcal{A}_{t}(f_1,P_kf_2)\big|$ is bounded from $L^{p_1}\times L^{p_2}$ to $L^{p}$ for $p_1,p_2>1$ when $\beta<\frac{1}{2}$. Similarly, we can get the boundedness of other two terms in \eqref{deco2} involving $P_k$. 
	
	Thus, it suffices to prove that for each $\mathbf{n}$, the maximal operator $\mathcal{M}^{\mathbf{n}}$ maps $L^{p_1}\times L^{p_2}$ into $L^{p}$ with bound $O\big(2^{-\epsilon|\mathbf{n}|}\big)$ for some $\epsilon>0$ in desired range of boundedness in \Cref{main2}. Since $\beta<\frac{1}{2}$, using the boundedness of $\mathcal{M}_{E}$, we have
	\[\left\|\mathcal{M}^{\mathbf{n}}(f_1,f_2)\right\|_{L^p}\lesssim \|f_1\|_{L^{p_1}}\|f_2\|_{L^{p_2}},\]
	when $1<p_1,p_2<\infty$. Interpolating the estimates with \Cref{L2} and \Cref{L1}, we get
	\[\left\|\mathcal{M}^{\mathbf{n}}(f_1,f_2)\right\|_{L^p}\lesssim 2^{-\epsilon|\mathbf{n}|}\|f_1\|_{L^{p_1}}\|f_2\|_{L^{p_2}},\]
	for some $\epsilon>0$ when $\left(\frac{1}{p_1},\frac{1}{p_2}\right)\in \Omega(O,D_1,D_2,D_3)$. Now, summing in $\mathbf{n}$ completes the proof of \Cref{main2}.
	
	\subsection{Example.} Let $E\subseteq [1,2]$ with Minkowski dimension $\beta$. For a small $\delta>0$, let $\{I_j\}_{j=1}^{\delta^{-\frac{\beta}{2}}}$ be a collection of $\delta^{\frac{1}{2}}-$length disjoint intervals that cover $E$. Define the set $F=\cup_{j=1}^{\delta^{-\frac{\beta}{2}}}4I_j$, where $4I_j$ is the interval of length $4\delta^{\frac{1}{2}}$ with the same center as $I_j$. Then $|F|\lesssim\delta^{\frac{1-\beta}{2}}$.
	
	Let $f_1(x)=\mathlarger{\chi}_{[-\delta,\delta]}$ and $f_2(x)=\mathlarger{\chi}_{F}(x)$. Then, for $x$ in $\delta-$neighbourhood of $E$, we have
	\[\mathcal{M}_E(f_1,f_2)(x)\gtrsim\delta^{\frac{1}{2}}.\]
	Thus, if $\mathcal{M}_E$ is bounded from $L^{p_1}\times L^{p_2}$ to $L^{p}$, then
	\begin{align*}
		\delta^{\frac{1}{2}}\delta^{\frac{1-\beta}{p}}\lesssim\|\mathcal{M}_E(f_1,f_2)\|_{L^p}\lesssim\|f\|_{L^{p_1}}\|f_2\|_{L^{p_2}}=\delta^{\frac{1}{p_1}}\delta^{\frac{1-\beta}{2p_2}}.
	\end{align*}
	Taking the limit $\delta\to 0$, we get that
	\[\frac{1}{p_1}+\frac{1-\beta}{2p_2}\leq\frac{1-\beta}{p}+\frac{1}{2}.\]
	Interchanging the role of functions $f_1$ and $f_2$, we can get that
	\[\frac{1-\beta}{2p_1}+\frac{1}{p_2}\leq\frac{1-\beta}{p}+\frac{1}{2}.\]
	
	\begin{remark}
			If we assume $\beta\geq\frac{1}{2}$ in the above equations along with either $p_2=\infty$ or $p_1=\infty$, then We get that $p_1\geq2\beta$ or $p_2\geq2\beta$. Note that from our result we recover sharp boundedness for this case except the endpoint.
	\end{remark}
	
	\subsection*{Acknowledgements :} Surjeet Singh Choudhary is supported by the National Center for Theoretical Sciences through NSTC grant 114-2124-M-002-004. Chun Yen Shen is supported in part by NSTC through grant 111-2115-M-002-010-MY5. Saurabh Shrivastava acknowledges the support from Anusandhan National Research Foundation (ANRF), India under the project ANRF/ARG/2025/000940/MS. 
	
	\bibliography{biblio}
	
\end{document}